\renewcommand\normalsize{%
    \@setfontsize\normalsize{11.7}{14pt plus .3pt minus .3pt}%
    \abovedisplayskip 10\p@ \@plus4\p@ \@minus4\p@
    \abovedisplayshortskip 6\p@ \@plus2\p@
    \belowdisplayshortskip 6\p@ \@plus2\p@
    \belowdisplayskip \abovedisplayskip}
\renewcommand\small{%
    \@setfontsize\small{9.5}{12\p@ plus .2\p@ minus .2\p@}%
    \abovedisplayskip 8.5\p@ \@plus4\p@ \@minus1\p@
    \belowdisplayskip \abovedisplayskip
    \abovedisplayshortskip \abovedisplayskip
    \belowdisplayshortskip \abovedisplayskip}
\renewcommand\footnotesize{%
    \@setfontsize\footnotesize{8.5}{9.25\p@ plus .1pt minus .1pt}
    \abovedisplayskip 6\p@ \@plus4\p@ \@minus1\p@
    \belowdisplayskip \abovedisplayskip
    \abovedisplayshortskip \abovedisplayskip
    \belowdisplayshortskip \abovedisplayskip}
\numberwithin{equation}{subsection}
\newtheorem{theorem}[equation]{Theorem}
\newtheorem*{theorem*}{Theorem}
\newtheorem*{lemma*}{Lemma}
\newtheorem{corollary}[equation]{Corollary}
\newtheorem{proposition}[equation]{Proposition}
\newtheorem*{proposition*}{Proposition}
\newtheorem{lemma}[equation]{Lemma}
\newtheorem*{conjecture*}{Conjecture}
\theoremstyle{definition}
\newtheorem{definition}[equation]{Definition}
\newtheorem{example}[equation]{Example}
\newtheorem{remark}[equation]{Remark}
\newcommand{\R}{\mathbb{R}}
\newcommand{\C}{\mathbb{C}}
\DeclareMathOperator{\Ad}{Ad}
\DeclareMathOperator{\Hom}{Hom}
\DeclareMathOperator{\End}{End}
\DeclareMathOperator{\Ind}{Ind}
\newcommand{\lie}{\mathfrak}
\newcommand{\cpt}{\operatorname{cpt}}
\newcommand{\cusp}{\operatorname{cusp}}
\newcommand{\cmc}{\operatorname{cmc}}
\newcommand{\Interior}{\operatorname{int}\Satake}
\newcommand{\circG}{{}^\circ\hspace{-1pt}G}
\newcommand{\circg}{{}^\circ\hspace{-1pt}g}
\newcommand{\Oshima}{\mathscr{M}}
\newcommand{\Groupoid}{\mathscr{G}}
\newcommand{\Subgroupoid}{\mathscr{H}}
\newcommand{\Satake}{\mathscr{X}}
\newcommand{\ldoublebracket}{[\![}
\newcommand{\rdoublebracket}{]\!]}
\newcommand{\Cc}{C_c}
\newcommand{\continuous}{continuous}
\newcommand{\bigG}{\mathbf{G}}
\newcommand{\bigH}{\mathbf{H}}
\newcommand{\nonnegative}{{\hspace{-.8pt}\circ\hspace{-.7pt}\plus}}
\newcommand{\spacenonnegative}{{\hspace{-.8pt}\circ\hspace{-.7pt}\plus}}
\begin{document}

\title[Lie groupoids and the tempered dual II]{Lie groupoids, the Satake compactification and the tempered dual, II: The Harish-Chandra principle}

\author{Jacob Bradd,  Nigel Higson and Robert Yuncken}

\date{}

\thanks{
This research was begun during the 2025 thematic trimester programme on Representation Theory and Noncommutative Geometry at the Institut Henri Poincar\'e. 
The authors acknowledge support of the Institut Henri Poincaré (UAR 839 CNRS-Sorbonne Université), and LabEx CARMIN (ANR-10-LABX-59-01).  This article is also based upon work from COST Action CaLISTA CA21109, supported by COST (European Cooperation in Science and Technology): \url{www.cost.eu}.  
J.~Bradd and R.~Yuncken were supported by project OpART of the Agence Nationale de la Recherche (ANR-23-CE40-0016) and R.~Yuncken was also suppported by project CroCQG (ANR-25-CE40-5010).  N.~Higson was supported  by the NSF grant DMS-1952669.  
}

\dedicatory{Dedicated to Georges Skandalis}

\begin{abstract}
We give a geometric account of Harish-Chandra's principle that a tempered irreducible representation of a real reductive group is either square-integrable modulo center, or embeddable in a representation that is parabolically induced from such a representation.  Our approach uses the Satake compactification,  an associated  groupoid that was constructed in the first paper of this series, and its $C^*$-algebra.
\end{abstract}

\maketitle

\section{Introduction} 
A basic organizing principle in the tempered representation theory of real reductive groups, discovered by Harish-Chandra, is that a tempered irreducible representation of a real reductive group is either square-integrable modulo center, or embeddable in a representation that is parabolically induced from a representation that is  square-integrable modulo center. This is one of the two foundational principles underpinning Harish-Chandra's   Plancherel formula, the other being that a reductive group possesses irreducible square-integrable representations precisely when it possesses a compact Cartan subgroup. 

The discrete series representations have been analyzed in detail from a geometric point of view---more precisely from an index-theoretic point of view---starting with the work of Parthasarathy \cite{Parthasarathy72} and Atiyah and Schmid \cite{AtiyahSchmid77}, and culminating in the work of Lafforgue \cite{Lafforgue02}. Harish-Chandra's first principle has received less attention.  

We shall use a $C^*$-algebra constructed from the {Satake compactification}   \cite{Satake60,BorelJiCompactificationsBook}   to give a conceptual, noncom\-mutative-geometric proof of Harish-Chandra's first principle. Our main results are stated precisely in Section~\ref{sec-statement-of-h-c-principle}.

$C^*$-algebras  play two roles in our argument. First,  Harish-Chandra's tempered irreducible unitary representations correspond precisely to those irred\-ucible unitary representations of $G$ that integrate to irreducible representations of the reduced group $C^*$-al\-gebra $C^*_r(G)$, and every irreducible representation of $C^*_r(G)$ is so-obtained \cite{CowlingHaagerupHowe88}.  Second,   $C^*$-algebra theory   provides a simple  tool to separate the space of all these irreducible representations into two parts: indeed if $A$ is any $C^*$-algebra, and if $J$ is any ideal in $A$, then there is a  partition the spectrum $\widehat A$ (the set of irreducible representations, up to equivalence) into those representations  that vanish on all elements of $J$, and those that don't, and this partition takes the simple form
\[
\widehat A = \widehat {A/J} \,\, \sqcup \,\, \widehat J;
\]
see for instance \cite[Sec.\,3.2]{DixmierCStarBook}.

In broad terms, glossing over some details for now, our argument will be as follows.  We shall introduce in Section~\ref{sec-topological-discrete-series} an ideal $I$ in $A{=}C^*_r(G)$ for which 
\[
\widehat I = \{ \, \text{discrete series representations of $G$} \,\}.
\]
This is a very general construction that may be applied to any unimodular locally compact group. Then, in Section~\ref{sec-parabolic-induction},  we shall define a second ideal $J \triangleleft A$ such that 
\[
\widehat {A/J} = \left \{ \, \parbox{3.4 in}{\begin{center}tempered irreducible representations of $G$ that embed in a principal series representation\end{center}} \,\right \}.
\]
The definition will be specific to real reductive groups, of course, but it will be otherwise very elementary, using only the definition of parabolic induction, as viewed from the perspective of $C^*$-algebra theory \cite{Clare:parabolic_induction,ClaCriHig:parabolic_induction}. Harish-Chandra's principle amounts to the assertion that $I=J$. 

In Section~\ref{sec-harish-chandra-principle} we shall prove that the ideals $I$ and $J$ coincide using the Satake compactification $\Satake$ of the symmetric space associated to $G$, and a  groupoid $\Groupoid_{\Satake}$ that we constructed and studied in \cite{BraddHigsonYunckenOshimaPart1}, following ideas of Omar Mohsen   \cite{Mohsen:blowup}. The reduced $C^*$-algebra of the groupoid, $C^*_r(\Groupoid_{\Satake})$ fits into an exact sequence 
\[
0 \longrightarrow C^*_r(\Groupoid_{\Interior }) \longrightarrow C^*_r(\Groupoid_{\Satake}) \longrightarrow C^*(\Groupoid_{\partial  \Satake}) \longrightarrow 0
\]
according to the decomposition of $\Satake$ into its interior and boundary.  We shall prove that $I{=}J$ by relating $I$ to the image of the inclusion morphism in the exact sequence,  and $J$ to the kernel of the quotient morphism; obviously the two ideals in the groupoid $C^*$-algebra are the same.  Crucial to the argument is a $C^*$-algebra morphism 
\[
C^*_r(G) \longrightarrow C^*_r(\Groupoid_{\Satake})
\]
that was introduced by Mohsen in \cite{Mohsen:blowup} and that is studied here first in Section~\ref{sec-satake-groupoid-and-c-star-algebra}, and then in a slightly generalized form in Section~\ref{sec-vector-bundles-on-satake}.

It  is a pleasure to thank Alain Connes and  Omar Mohsen  for illuminating conversations that  directly inspired our work.

\section{The topological discrete series}
\label{sec-topological-discrete-series}

In this section we shall investigate what might be called the topological discrete series of a  locally compact and unimodular  group; these are the irreducible unitary representations that are  isolated in the reduced unitary dual of the group.  Our treatment of the topic is  influenced by the approach  of Bernstein \cite{Bernstein92notes} to the smooth representation theory of $p$-adic groups.

For real reductive groups with compact center, the topological discrete series representations turn out to be the precisely  Harish-Chandra's discrete series (they are in any case obviously included in the discrete series). But  for our approach to the Harish-Chandra principle is is more appropriate to use the topological discrete series from the outset.

\subsection{The compact ideal in the reduced group C*-algebra} 
Throughout Section~\ref{sec-topological-discrete-series} we shall denote by $G$  a unimodular, locally compact and Hausdorff topological  group.

The following two definitions are appropriate only for groups with compact center (see Remark~\ref{rem-noncompact-g-problems}), but they can nevertheless be given for any $G$.

\begin{definition}
    An irreducible unitary representation $\pi$ of $G$ is a \emph{topological discrete series representation} if  the singleton set $\{ \pi\}$ is an open and closed subset of the reduced unitary dual of $G$ --- which is by definition the spectrum of the reduced group $C^*$-algebra $C^*_r (G)$.
\end{definition}

For the rest of this section, fix a compact subgroup $K \subseteq G$. The definition below depends on the choice of $K$, or rather on the conjugacy class of $K$.  But we shall avoid inserting $K$ into the terminology because when we turn to reductive groups it will be natural to pick a maximal compact subgroup, obviating the need to mention $K$ explicitly.

Let $V$ be (the underlying finite-dimensional Hilbert space of) a finite-dim\-en\-sional unitary representation of $K$. We shall be interested in the Hilbert space 
\begin{equation}
    \label{eq-def-of-l-2-g-mod-k-v}
L^2(G/K;V) = \bigl [ L^2 (G)\otimes V \bigr ]^K,
\end{equation}
where the fixed-point space is defined using the right-translation action of $K$ on $G$, and of course the given action of $K$ on $V$. More geometrically, this is the space of $L^2$-sections of the vector bundle over $G/K$ that is induced from $V$. The group $G$ acts on $L^2(G/K;V)$ via the left-translation action of $G$ on itself.

\begin{definition}
\label{def-compact-ideal}
    The \emph{compact ideal} $C^*_{\cpt}(G)\triangleleft C^*_r(G)$ is 
    \[
    \Bigl \{ \,
    f \in C_r^*(G) : \parbox{270pt}{\begin{center}$f$ acts as a compact operator on $L^2(G/K;V)$, for every \\ finite-dimensional unitary representation $V$ of $K$\end{center}}
    \, \Bigr \} 
    \]
\end{definition}

Our goal is to determine the relationship between the topological discrete series and the compact ideal.  For this we need one more definition.

\begin{definition} 
Let $G$ be a locally compact group and let $K$ be a compact subgroup of $G$.  A unitary representation $\pi$ of $G$ is \emph{$K$-admissible} if the restriction of $\pi$ to $K$ includes each irreducible representation of $K$ with at most finite multiplicity. 
\end{definition}

Recall  now that if $I$ is any closed, two-sided ideal in a $C^*$-algebra $A$, then every irreducible representation of $I$ extends in a unique way to an irreducible representation of $A$, and in this way the spectrum of $I$ is identified with an open subset of the spectrum of $A$, namely the open set of those irreducible representations of $A$ that are non-zero on $I$.  With this, the main result of Section~\ref{sec-topological-discrete-series} is as follows: 

\begin{theorem}\label{thm:Spec_J}
Fix $G$ and $K$ as above. The spectrum of the ideal $C^*_{\mathrm{cpt}}(G)$ consists of all \textup{(}equivalence classes of\textup{)} irreducible unitary representations of $G$  such that
    \begin{enumerate}[\rm (i)]
    
    \item $\pi$ is a topological discrete series representation, and 

    \item $\pi$ is $K$-admissible.
    
\end{enumerate}
\end{theorem}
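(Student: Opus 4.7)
Write $A=C^*_r(G)$ and $J=C^*_{\cpt}(G)$.  The plan begins with a reformulation of $J$ in Hilbert module terms.  Viewing $L^2(G)$ as a right Hilbert $C^*(K)$-module via right convolution, the left regular representation $\lambda$ embeds $A$ into the $C^*(K)$-adjointable operators on $L^2(G)$.  The right $K$-isotypical decomposition $L^2(G)\cong\bigoplus_\sigma V_\sigma\otimes L^2(G/K;V_\sigma^*)$ gives $\mathcal{K}_{C^*(K)}(L^2(G))\cong\bigoplus_\sigma\mathcal{K}(L^2(G/K;V_\sigma^*))$, and unwinding the definition shows $J=A\cap\mathcal{K}_{C^*(K)}(L^2(G))$.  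In particular $J$ sits as a $C^*$-subalgebra of a $c_0$-sum of compact operator algebras, hence is liminal.

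For the direction \emph{topologically discrete and $K$-admissible implies $\pi\in\widehat J$}, topological isolation of $\pi$ furnishes an ideal $I_\pi\triangleleft A$ with $\widehat{I_\pi}=\{\pi\}$: since $\lambda$ is faithful on $A$, the restriction $\lambda|_{I_\pi}$ is a nonzero multiple of $\pi$, so $\pi$ is a discrete summand of $\lambda$ and hence square-integrable.  Matrix coefficients $\phi_{v,w}(g)=\langle\pi(g)v,w\rangle$ then lie in $L^2(G)$ and define bounded elements of $A$, with $\lambda(\phi_{v,w})$ acting on the $\pi$-isotypical $H_\pi\otimes H_\pi^*\subseteq L^2(G)$ as $\pi(\phi_{v,w})\otimes\mathrm{id}$ and vanishing on other isotypicals.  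Schur orthogonality gives $\pi(\phi_{v,w})=d_\pi^{-1}|v\rangle\langle w|$, so on $L^2(G)_\tau$ the operator $\lambda(\phi_{v,w})$ has the form $d_\pi^{-1}|v\rangle\langle w|\otimes\mathrm{id}_{(H_\pi^*)^\tau}$, of finite rank $\dim(H_\pi^*)^\tau<\infty$ by $K$-admissibility.  Hence $\phi_{v,w}\in J$ and $\pi(\phi_{v,w})\ne 0$, showing $\pi\in\widehat J$.

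For the converse, assume $\pi\in\widehat J$, so $\pi(J)=\mathcal{K}(H_\pi)$ by liminality.  Since $\lambda(J)\subseteq\bigoplus_\sigma\mathcal{K}(L^2(G/K;V_\sigma^*))$, the representation $\lambda|_J$ decomposes into a discrete direct sum of irreducible $J$-subrepresentations, and the structure of the embedding forces $\pi|_J$ to appear as one such summand.  Because $J\triangleleft A$, the $\pi|_J$-isotypical of $L^2(G)$ is $A$-invariant and carries the canonical multiplicity extension $\pi(a)\otimes\mathrm{id}$; thus $\pi$ is realized as a $G$-subrepresentation of $\lambda$, and in particular is topologically isolated in $\widehat A$.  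For $K$-admissibility, pick $f\in J$ with $\pi(f)$ a rank-one projection onto a unit vector $v$; then $\lambda(f)|_{L^2(G)_\tau}$ is compact, and the $\pi$-contribution has the form $|v\rangle\langle v|\otimes\mathrm{id}_{(H_\pi^*)^\tau}$ with rank $\dim(H_\pi^*)^\tau$, so compactness forces $\dim(H_\pi^*)^\tau<\infty$ for every $\tau$.

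The principal technical obstacle is the claim that every irreducible representation of $J$ appears as a direct summand of $\lambda|_J$.  This is not automatic for subalgebras of $c_0$-sums of compact operators, but should follow in our setting from the faithfulness of $\lambda$ on the enveloping algebra $A=C^*_r(G)$ combined with the specific way $J$ arises as $A\cap\mathcal{K}_{C^*(K)}(L^2(G))$.
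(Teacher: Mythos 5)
There is a genuine gap, and it sits exactly where you flagged it, plus one step earlier. Your opening identification $J=A\cap\mathcal{K}_{C^*(K)}(L^2(G))$ is false: the Hilbert-module compacts on $L^2(G)$ form the $c_0$-direct sum $\bigoplus_\sigma\mathfrak{K}(L^2(G/K;V_\sigma^*))$, so membership in them requires not only compactness on each isotypical block but also decay of the blockwise norms as $\sigma\to\infty$. The compact ideal of Definition~\ref{def-compact-ideal} imposes no such decay, and indeed it cannot: if $\pi$ is a discrete series representation, an element of the summand $I(\pi)\cong\mathfrak{K}(H_\pi)$ acts on the $\sigma$-block of $H_\pi\otimes\overline H_\pi\subseteq L^2(G)$ as $\pi(f)\otimes 1$ with norm $\|\pi(f)\|$ for infinitely many $\sigma$. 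So $\lambda(J)$ lands only in the $\ell^\infty$-product of the blockwise compacts, not the $c_0$-sum. Consequently your deductions that $J$ is liminal and that $\lambda|_J$ decomposes discretely with every irreducible representation of $J$ appearing as a summand do not follow: subalgebras of $\ell^\infty$-products of compacts need not be type I, and their irreducible representations need not occur in the defining representation. (Had the $c_0$-sum containment been true, your final worry would actually be unfounded: every irreducible representation of a subalgebra of a $c_0$-sum of compacts does occur as a summand of the defining representation, by \cite[Prop.\,2.10.2]{DixmierCStarBook} plus discrete decomposability of representations by compact operators. The problem is the containment itself.)

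The paper closes this gap with a device you do not have: it filters the compact ideal by the ideals $J_p=\overline{C^*_{\cpt}(G)\,p\,C^*_{\cpt}(G)}$ for central projections $p\in C^*(K)$, whose union is dense, so that $\widehat{J}=\bigcup_p\widehat{J_p}$; and it proves (Lemma~\ref{lem:I_S_faithful}) via the Cowling--Haagerup--Howe spectral radius formula $\|f\|_{C^*_r(G)}=\lim_n\|(f^*f)^n\|_{L^2(G)}^{1/2n}$ that $C^*_r(G)\,p\,C^*_r(G)$ acts \emph{isometrically} on the single block $L^2(G){\cdot}p$. There each $J_p$ acts by compact operators, so $J_p$ genuinely is a $c_0$-sum of elementary algebras faithfully and discretely represented on $L^2(G){\cdot}p$; square-integrability, isolation, and (via the finite-rank argument you also use) admissibility of each $\pi\in\widehat{J_p}$ then follow as in your sketch. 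You should also note a smaller gap in the forward direction: from topological isolation alone, $I_\pi$ is an ideal with one-point spectrum, but concluding that $\lambda|_{I_\pi}$ is a multiple of $\pi$ requires knowing $I_\pi\cong\mathfrak{K}(H_\pi)$; the paper gets this from $K$-admissibility first (the algebras $\pi(pC^*_r(G)p)$ are finite rank, so $\pi(C^*_r(G))=\mathfrak{K}(H_\pi)$), and you should invoke admissibility at that point rather than only later in the Schur orthogonality computation.
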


To begin we shall show that every $K$-admissible topological discrete series representation belongs to the spectrum of the compact ideal (the converse requires a bit more work).  

For computations, it will be convenient to recast Definition~\ref{def-compact-ideal} as follows.  Corresponding to each irreducible unitary representation $V$ of $K$ there is a central projection $p\in C^*(K)$, which acts as the projection onto the $V$-isotypical subspace in any unitary representation of $K$, and every central projection in $C^*(K)$ is an orthogonal sum of such.  Since, when $V$ is irreducible,
\[
L^2(G){\cdot} p \cong \bigl [ L^2 (G)\otimes V^* \bigr ]^K\otimes V =L^2(G/K;V^*)\otimes V ,
\] 
where here we are using the right action of $C^*(K)$ on $L^2(G)$, the compact ideal  can be described as the ideal of those elements in $C^*_r(G)$ that act as compact operators on $L^2(G){\cdot} p$, for every central projection $p\in C^*(K)$.

\begin{lemma} 
\label{lem-admissible-tds-implies-square-integrable}
If an irreducible unitary representation $\pi$ is a $K$-admissible topological discrete series representation, then $\pi$ is square-integrable.
\end{lemma}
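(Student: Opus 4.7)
The plan is to exhibit $\pi$ as a subrepresentation of the left regular representation on $L^2(G)$, which for an irreducible unitary representation of a unimodular group with compact center is equivalent to square-integrability. The two hypotheses play distinct roles: the topological discrete series hypothesis will produce a $G$-invariant piece of $L^2(G)$ on which $C^*_r(G)$ acts through a simple ideal with $\pi$ as its unique irreducible representation, and $K$-admissibility will be what forces this ideal to be isomorphic to the compact operators on the representation space $H_\pi$.

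First, because $\{\pi\}$ is both open and closed in the spectrum of $C^*_r(G)$, standard $C^*$-algebra theory produces closed ideals $I_\pi$ and $I'$ of $C^*_r(G)$ with spectra $\{\pi\}$ and its complement. Inspection of spectra shows $I_\pi \cap I' = 0$ and $I_\pi + I' = C^*_r(G)$, so $C^*_r(G) = I_\pi \oplus I'$ as $C^*$-algebras, which yields a central projection $z \in \mathcal{M}(C^*_r(G))$ with $z\, C^*_r(G) = I_\pi$. The ideal $I_\pi$ is simple and has $\pi$ as its unique irreducible representation, on which it acts faithfully. The principal step, where $K$-admissibility enters, is to identify $I_\pi$ with $\mathcal{K}(H_\pi)$ via $\pi$. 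For each irreducible representation $V$ of $K$, the central projection $p_V \in C^*(K) \subseteq \mathcal{M}(C^*_r(G))$ acts as a finite-rank projection on $H_\pi$ by admissibility. Choosing $a \in I_\pi$ with $\pi(a) \neq 0$ and irreducible $K$-representations $V, W$ with $\pi(p_V)\pi(a)\pi(p_W)\neq 0$, the element $p_V a p_W$ lies in $I_\pi$ and has nonzero finite-rank image under $\pi$. The classical fact that a subalgebra of $B(H_\pi)$ acting irreducibly and containing a nonzero compact operator must contain all of $\mathcal{K}(H_\pi)$ then gives $\pi(I_\pi) \supseteq \mathcal{K}(H_\pi)$; combining this with the simplicity of $I_\pi$ and the ideal property of $\mathcal{K}(H_\pi)$ in $B(H_\pi)$ forces $\pi \colon I_\pi \to \mathcal{K}(H_\pi)$ to be an isomorphism.

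To conclude, $zL^2(G)$ is a closed $G$-invariant subspace, nonzero because the regular representation of $C^*_r(G)$ is faithful, on which $C^*_r(G)$ acts through $I_\pi \cong \mathcal{K}(H_\pi)$. Every nondegenerate representation of the compact operators is a multiple of the defining representation, so $\pi$ appears as a $G$-subrepresentation of $L^2(G)$ and is therefore square-integrable. The main difficulty is the middle step of pinning down $I_\pi$ as the compact operators; without $K$-admissibility, the isolated ideal $I_\pi$ could in principle be a simple $C^*$-algebra with no finite-rank projections onto $K$-isotypical subspaces, and $\pi$ would not be forced to embed in $L^2(G)$.
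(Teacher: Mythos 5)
Your proof is correct and follows essentially the same route as the paper's: the isolated point $\{\pi\}$ yields a complemented ideal $I(\pi)$ with a central multiplier projection, $K$-admissibility identifies $\pi(I(\pi))$ with $\mathfrak{K}(H_\pi)$, and the essential subspace of $I(\pi)$ in $L^2(G)$ is then a nonzero multiple of $\pi$, giving square-integrability. The only cosmetic difference is in the middle step, where the paper uses density of the union of the corners $p\,C^*_r(G)\,p$ to get $\pi(C^*_r(G))=\mathfrak{K}(H_\pi)$ directly, whereas you produce a single nonzero finite-rank operator $\pi(p_V a p_W)$ and invoke the classical theorem on irreducibly acting algebras containing a nonzero compact operator, together with simplicity of the ideal.
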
 

\begin{proof} 
The union of all subalgebras 
$p{\cdot} C^*_r(G){\cdot} p \subseteq C^*_r(G)$,
as $p$ ranges over the central projections in $C^*(K)$, acting  as multipliers of $C^*_r(G)$, is a dense subalgebra of $C^*_r(G)$.  Under the assumption of $K$-admissibility, each $\pi (p{\cdot} C^*_r(G){\cdot} p)$ is a $C^*$-algebra of finite-rank operators on $H_\pi$. Therefore, assuming that $\pi$ is irreducible,  
\[
\pi (C^*_r (G))=\mathfrak{K}(H_\pi),
\]
the $C^*$-algebra of compact operators on $H_\pi$.

Now if $A$ is any  $C^*$-algebra, and if $\pi$ is an irreducible representation of $A$ such that $\{\pi\}$ is an open and closed subset of $\widehat A$, then $A$ decomposes as a direct sum of closed, two-sided ideals $A = I(\pi) \oplus \operatorname{ker}(\pi)$, where 
\begin{equation}
\label{eq-annihilator-ideal}
I(\pi) = \{\, a\in A  : a \cdot \operatorname{ker}(\pi) = \operatorname{ker}(\pi)\cdot a = 0\,\} ,
\end{equation}
and obviously the restriction of $\pi$ to $I(\pi)$ is an isomorphism onto $\pi ( A)$. Thus 
\[
A = I(\pi)\oplus \operatorname{ker}(\pi)\cong \pi(A) \oplus \operatorname{ker}(\pi).
\]
In the present case, because of the direct sum decomposition above, the closed subspace 
\[
\overline{I(\pi)\cdot L^2 (G)} \subseteq L^2 (G)
\]
is both a subrepresentation of the regular representation of $G$ and a representation of $I(\pi)\cong \mathfrak{K}(H_\pi)$.  As a representation of the compact operators, it is a direct sum of irreducible subrepresentations.  Each subrepresentation is also a representation of $G$, equivalent to $\pi$, since the action of $G$ factors through the projection $C^*_r(G)\to I(\pi)$.  So $\pi$ is embeddable into $L^2(G)$, and is therefore square-integrable; see for instance  \cite[Lem.~14.1.2]{DixmierCStarBook}.
\end{proof}

\begin{remark}
\label{rem-noncompact-g-problems}
    If $G$ fails to have compact center, then it has no square-integrable irreducible unitary representations, and therefore no $K$-admissible discrete series representations.  In addition, if $G$ fails to have compact center, then   $C^*_{\mathrm{cpt}}(G){=}0$.  Indeed, if $G$ fails to have compact center, then  the representation of the center   on $[L(G)\otimes V]^K$ has no finite-dimensional subrepresentations, whereas each nonzero self-adjoint element of $C^*_{\mathrm{cpt}}(G)$ would have a finite-dimensional eigenspace on some $[L(G)\otimes V]^K$ that would be such a finite-dimensional subrepresentation. So in the noncompact center case, Theorem~\ref{thm:Spec_J} is correct but vacuous.
\end{remark}

\begin{lemma}
    Every  $K$-admissible topological discrete series representation is included in the spectrum of the compact ideal.
\end{lemma}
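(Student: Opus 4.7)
The plan is to prove the slightly stronger statement that the direct summand $I(\pi) \cong \mathfrak{K}(H_\pi)$ constructed in the proof of Lemma~\ref{lem-admissible-tds-implies-square-integrable} is actually contained in the compact ideal $C^*_{\cpt}(G)$. Since the restriction of $\pi$ to $I(\pi)$ is the defining representation of $\mathfrak{K}(H_\pi)$, and hence nonzero, this will immediately place $\pi$ in the spectrum of $C^*_{\cpt}(G)$.

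Given a central projection $p \in C^*(K)$ corresponding to an irreducible unitary representation $V$ of $K$, I would analyse the action of $I(\pi)$ on $L^2(G){\cdot} p \cong L^2(G/K;V^*) \otimes V$. Since $I(\pi)$ and $\operatorname{ker}(\pi)$ are orthogonal ideals whose sum is $C^*_r(G)$, every unitary $C^*_r(G)$-module splits as a direct sum of its $\pi$-isotypical and non-$\pi$-isotypical parts, and $I(\pi)$ annihilates the second summand. The problem therefore reduces to understanding the action of $I(\pi)$ on the $\pi$-isotypical part of $L^2(G){\cdot} p$.

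Here I would invoke the standard matrix coefficient description of the $\pi$-isotypical component of $L^2(G)$ valid for a square-integrable representation: it is isomorphic to $H_\pi \otimes \overline{H_\pi}$, with $G$ acting on the left through $\pi \otimes 1$ and $K$ acting on the right through the contragredient representation $\pi^\vee$ on $\overline{H_\pi}$. Consequently, the $\pi$-isotypical part of $L^2(G){\cdot} p$ takes the form $H_\pi \otimes M$, where $M$ is the $V$-isotypical subspace of $\pi^\vee|_K$, and on which $I(\pi)$ acts as $\mathfrak{K}(H_\pi) \otimes 1_M$. Because $\pi$ is $K$-admissible (equivalently, $\pi^\vee$ is), the multiplicity space $M$ is finite-dimensional. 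It follows at once that each element of $I(\pi)$ acts on $L^2(G){\cdot} p$ as a compact operator, and hence on $L^2(G/K;V^*)$ as well, since the tensor factor $V$ is finite-dimensional. Letting $V$ range over all irreducible unitary representations of $K$ yields $I(\pi) \subseteq C^*_{\cpt}(G)$, as desired.

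The main obstacle, as I see it, is not really a conceptual one but a bookkeeping one: tracking which of the left and right regular $K$-actions is carried by which tensor factor, so as to identify the multiplicity space $M$ correctly and confirm that it is precisely the kind of isotypical subspace to which the $K$-admissibility hypothesis applies. Once $\dim M < \infty$ is in hand, the compactness conclusion, and hence membership of $\pi$ in the spectrum of $C^*_{\cpt}(G)$, is automatic.
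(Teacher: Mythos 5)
Your proposal is correct and follows essentially the same route as the paper: both reduce the claim to showing $I(\pi)\subseteq C^*_{\cpt}(G)$ by using square-integrability to realize the $\pi$-isotypical part of $L^2(G)$ via matrix coefficients as $H_\pi\otimes\overline{H_\pi}$, and then using $K$-admissibility to see that the relevant isotypical (equivalently, $K$-invariant) subspace is a finite multiplicity space on which $I(\pi)\cong\mathfrak{K}(H_\pi)$ acts compactly. The only cosmetic difference is that you phrase the finiteness in terms of the multiplicity space $M$ of $\pi^\vee|_K$, while the paper phrases it as $[\overline{H}_\pi\otimes V]^K$ being a finite direct sum of copies of $\pi$; these are the same statement.
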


\begin{proof}
    Let $\pi$ be a $K$-admissible topological discrete series representation of $G$.  By Lemma~\ref{lem-admissible-tds-implies-square-integrable} it is square-integrable, and therefore every subrepresentation of $L^2(G)$ that is equivalent to $\pi$ has the form
    \begin{equation}
        \label{eq-embedding-sq-int-rep-into-regular-rep}
    H_w = \{\, \varphi_{ w,v} \in L^2(G) : v\in H_\pi\,\}
    \end{equation}
    for some nonzero $w\in H_\pi$, where $\varphi_{ w,v}$ is the square-integrable matrix coefficient function 
    \begin{equation}
        \label{eq-matrix-coefficient-function}
    \varphi_{w,v}(g) = \langle w,\pi(g)^{-1} v\rangle_{H_\pi}.
    \end{equation}
    It follows from the admissibility of $\pi$ that for every finite-dimensional unitary representation $V$ of $K$ the space 
    \[
    \overline{I(\pi)\cdot L^2 (G/K;V)}   = \bigl [\overline{I(\pi)\cdot L^2 (G)}\otimes V \bigr ]^K ,
    \]
    with $I(\pi)$ the annihilator ideal in \eqref{eq-annihilator-ideal}, is a finite-direct sum of copies of $\pi$, and that therefore $I(\pi)$ acts compactly on it.  Therefore $I(\pi)\subseteq C^*_{\mathrm{cpt}}(G)$.
\end{proof}

\subsection{A spectral radius-type formula} In order to complete the proof of Theorem~\ref{thm:Spec_J} we need  the following simple but remarkable formula due to Cowling, Haagerup and Howe.

\begin{theorem}[{\cite[Remark (d), p.103]{CowlingHaagerupHowe88}}]
\label{thm-chh-formula}
   Let $G$ be any locally compact Hausdorff topological group. If $f\in C_c(G)$ then
    \[
    \|f\|_{C^*_r(G)} = \lim_{n\to\infty} \| (f^*f)^{n} \|_{L^2(G)}^{{1}/{2n}},
    \]
where the multiplication in $C_c(G)$ is  the convolution product, defined using a choice of Haar measure \textup{(}we use the same choice for both   sides of the formula\textup{)}, and the adjoint is the convolution adjoint.
\end{theorem}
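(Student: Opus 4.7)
The plan is to recognize both sides of the identity as invariants of the positive operator $T := \lambda(f^*f)$ inside the group von Neumann algebra $L(G) := \lambda(G)''$, and to invoke the spectral-radius formula relative to the canonical Plancherel trace. Because $G$ is unimodular, $L(G)$ admits a canonical faithful, normal, semifinite trace $\tau$ characterized by $\tau(\lambda(h)^*\lambda(h)) = \|h\|_{L^2(G)}^2$ for every $h \in C_c(G)$, equivalently by $\tau(\lambda(g)) = g(e)$ whenever $g \in C_c(G)*C_c(G)$.  This existence theorem, going back to Segal, Godement and Dixmier, is essentially the only nontrivial ingredient.

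Next I would rewrite $\|(f^*f)^n\|_{L^2(G)}^2$ as a trace moment of $T$.  Since $f^*f$ is convolution-self-adjoint, so is every convolution power $(f^*f)^n$.  For any self-adjoint $h \in C_c(G)$, unimodularity gives
\[
\|h\|_{L^2(G)}^2 \;=\; \int h(y)\,\overline{h(y)}\,dy \;=\; \int h(y)\, h(y^{-1})\,dy \;=\; (h*h)(e).
\]
Applying this to $h=(f^*f)^n$ and using $\lambda(g_1*g_2) = \lambda(g_1)\lambda(g_2)$,
\[
\|(f^*f)^n\|_{L^2(G)}^2 \;=\; (f^*f)^{2n}(e) \;=\; \tau(T^{2n});
\]
in particular $\tau(T) = \|f\|_{L^2(G)}^2 < \infty$.

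The problem is thereby reduced to the following spectral-radius statement: if $T\geq 0$ lies in a von Neumann algebra with a faithful normal trace $\tau$ and $\tau(T)<\infty$, then $\|T\| = \lim_n \tau(T^n)^{1/n}$.  To prove this, let $E$ be the spectral resolution of $T$ and $\mu(\cdot) = \tau(E(\cdot))$ the scalar spectral measure on $[0,\|T\|]$.  For any $0 < c < \|T\|$, the domination $E((c,\|T\|]) \leq c^{-1} T$ together with the elementary estimate $\tau(T^n)\leq \|T\|^{n-1}\tau(T)$ gives $\mu((c,\|T\|])<\infty$, while the faithfulness of $\tau$ and the nonvanishing of $E((c,\|T\|])$---which holds by the very definition of the spectral radius---gives $\mu((c,\|T\|])>0$.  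The sandwich
\[
c^{n}\mu((c,\|T\|]) \;\leq\; \tau(T^n) \;=\; \int \lambda^n\,d\mu(\lambda) \;\leq\; \|T\|^{n-1}\tau(T),
\]
followed by taking $n$-th roots, letting $n\to\infty$ and then $c\uparrow\|T\|$, yields the claim.  Applying it with $n$ replaced by $2n$,
\[
\lim_n \|(f^*f)^n\|_{L^2(G)}^{1/(2n)} \;=\; \lim_n \tau(T^{2n})^{1/(4n)} \;=\; \|T\|^{1/2} \;=\; \|\lambda(f)^*\lambda(f)\|^{1/2} \;=\; \|f\|_{C^*_r(G)}.
\]

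The only real obstacle is the Plancherel trace used in the first step; granted its existence and standard properties, everything else is elementary.  One might try to bypass the von Neumann algebra altogether by using the scalar spectral measure of $T$ at the vector $\phi := f^*f \in L^2(G)$ directly, since $\|(f^*f)^n\|_{L^2(G)}^2 = \langle T^{2n-2}\phi,\phi\rangle$; but then one has to verify that $\|T\|$ lies in the support of \emph{that} measure, which is not automatic and is most naturally addressed by exploiting the invariance of $T$ under the right regular representation---essentially the same circle of ideas as the trace approach.
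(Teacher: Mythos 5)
The paper offers no proof of this statement---it is quoted directly from Cowling--Haagerup--Howe---so there is nothing internal to compare against; I will assess your argument on its own terms. Your proof is correct, and the route through the Plancherel trace on the group von Neumann algebra is a clean way to package it: the reduction $\|(f^*f)^n\|_{L^2(G)}^2=(f^*f)^{2n}(e)=\tau(T^{2n})$ is right, and your trace-spectral-radius lemma (finiteness of $\mu((c,\|T\|])$ from $E((c,\|T\|])\le c^{-1}T$, positivity from faithfulness, then the sandwich) is complete and correct. Two remarks. First, there is a mismatch in generality: the statement is for \emph{any} locally compact group, whereas your argument invokes the Plancherel \emph{trace}, which exists only in the unimodular case; for non-unimodular $G$ the group von Neumann algebra can fail to be semifinite and one must run the same argument with the Plancherel \emph{weight} $\varphi(\lambda(h)^*\lambda(h))=\|h\|_{L^2}^2$ instead (your spectral-radius lemma survives, since it only uses normality, faithfulness and monotonicity, not traciality). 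This restriction is harmless for the paper, which only applies the formula in Lemma~\ref{lem:I_S_faithful} under the standing unimodularity hypothesis of Section~\ref{sec-topological-discrete-series}, but as written you have not proved the theorem in its stated generality. Second, the original Cowling--Haagerup--Howe argument is essentially the ``bypass'' you describe and dismiss at the end, and the obstacle you identify there is easier to clear than you suggest: one does not test only against the single vector $\phi=f^*f$, but against arbitrary $\xi\in C_c(G)$ with $\|\xi\|_2=1$, using $\langle T^n\xi,\xi\rangle=\bigl((f^*f)^n*\xi*\xi^*\bigr)(e)\le\|(f^*f)^n\|_{L^2}\,\|\xi*\xi^*\|_{L^2}$ together with density of $C_c(G)$ to find $\xi$ with spectral mass near $\|T\|$; this avoids von Neumann algebras entirely. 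Your version buys a tidier conceptual statement at the cost of importing the existence of the canonical trace.
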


\begin{lemma}
\label{lem:I_S_faithful}
Let $p\in C^*(K)$ be a central projection.
The ideal 
$C_r^*(G) p C_r^*(G)\triangleleft C^*_r(G) $
 acts isometrically on $L^2(G){\cdot} p$.
\end{lemma}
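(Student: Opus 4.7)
The inequality $\|L_T|_{L^2(G) \cdot p}\| \leq \|T\|_{C^*_r(G)}$ is immediate from the invariance of $L^2(G) \cdot p$ under the left-regular representation; the work lies in the reverse inequality. My approach is to apply the Cowling--Haagerup--Howe formula (Theorem~\ref{thm-chh-formula}),
\[
\|T\|_{C^*_r(G)} = \lim_{n \to \infty} \|(T^*T)^n\|_{L^2(G)}^{1/2n},
\]
and to show that the $L^2$-norms on the right are controlled by the action on $L^2(G) \cdot p$. By linearity and density it suffices to treat the generating case $T = a \ast p \ast b$ with $a, b \in C_c(G)$.

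The heart of the argument is a tracial rearrangement. Let $\tau(f) = f(e)$ denote the canonical trace on $C_c(G)$, so that $\|(T^*T)^n\|_{L^2}^2 = \tau((T^*T)^{2n})$. Using the cyclicity of $\tau$ together with the idempotence $p \ast p = p$, one rewrites
\[
\tau((T^*T)^{2n}) = \tau(M^{2n}), \qquad M := (p a^* a p)^{1/2}\, b b^*\, (p a^* a p)^{1/2},
\]
where $M$ is a positive self-adjoint element of the corner subalgebra $p\, C^*_r(G)\, p$. Passing to the limit and invoking the spectral-radius identity $\|M\| = \lim_k \tau(M^k)^{1/k}$ (valid for positive elements with respect to a faithful semifinite trace), we obtain
\[
\|T\|_{C^*_r(G)}^2 = \|M\|_{C^*_r(G)}.
\]

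It remains to identify $\|M\|_{C^*_r(G)}$ with $\|L_T|_{L^2(G) \cdot p}\|^2$. For this I would appeal to the Morita-type compatibility between the corner $p\, C^*_r(G)\, p$ and the ideal $C^*_r(G)\, p\, C^*_r(G)$: because $M$ lies in the corner, its image under the compression of the regular representation by the projection associated to $p$ is faithful and therefore isometric, so $\|M\|_{C^*_r(G)}$ coincides with the operator norm of $M$ on $L^2(G) \cdot p$. Performing the same cyclic trace rearrangement within this subrepresentation yields $\|L_T|_{L^2(G) \cdot p}\|^2 = \|M|_{L^2(G) \cdot p}\|$, which combines with the previous equality to give the claim. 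The principal technical obstacle I anticipate is carrying out the cyclic trace manipulation in the necessary generality; the assumption $T \in C^*_r(G)\, p\, C^*_r(G)$ is exactly what makes the idempotent $p$ available on both sides of $T^*T$, allowing $M$ to be absorbed into the corner subalgebra.
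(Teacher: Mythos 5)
Your strategy shares its engine with the paper's proof --- both reduce the hard inequality to the Cowling--Haagerup--Howe formula of Theorem~\ref{thm-chh-formula} --- and your identity $\|T\|^2=\|M\|$ for $T=a*p*b$ is correct (indeed it needs no trace: with $v=(pa^*ap)^{1/2}b$ one has $T^*T=v^*v$ and $M=vv^*$, so $\|T\|^2=\|v^*v\|=\|vv^*\|=\|M\|$). But there are two genuine gaps. The first is the opening reduction: ``by linearity and density it suffices to treat $T=a*p*b$'' is not valid for an isometry statement. The ideal $C^*_r(G)\,p\,C^*_r(G)$ is the \emph{closed linear span} of such products, and single products are not dense in it (already in $M_2(\C)\oplus M_2(\C)$ with $p=e_{11}\oplus e_{11}$, the products $apb$ are pairs of rank-one matrices, while the ideal they generate is everything). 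Knowing $\|\lambda(T)\|=\|T\|$ on single products says nothing about sums. Moreover your mechanism does not survive the passage to sums: for $T=\sum_j a_j p b_j$ the element $T^*T=\sum_{j,k}b_j^*pa_j^*a_kpb_k$ no longer has a single positive factor sandwiching the $b$'s, so there is no corner element $M$ to absorb it into. The paper's proof is built precisely to handle finite sums: it peels one factor $x_jpy_j$ off $(a^*a)^n$ to get $\|(a^*a)^n\|_{L^2}\le \|\lambda(a)\|^{2n-1}_{\mathfrak{B}(L^2(G)\cdot p)}\sum_j\|x_jp\|_{L^2}\,\|\rho(y_j)\|$, using only that $x_jp\in L^2(G)\cdot p$ and that the constant disappears in the $2n$-th root.

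The second gap is the step you use to close the argument: that the corner $p\,C^*_r(G)\,p$ acts faithfully, hence isometrically, on $L^2(G)\cdot p$. This is asserted under the heading of ``Morita-type compatibility'' but never proved, and since the corner is contained in the closed ideal $C^*_r(G)\,p\,C^*_r(G)$, it is itself a special case of the lemma you are trying to prove --- so as written the argument is close to circular. The corner case is genuinely easier and can be done by the paper's own estimate (for $m=pmp\in C_c(G)$ one has $m=m*p\in L^2(G)\cdot p$, hence $\|(m^*m)^n\|_{L^2}\le\|\lambda(m)\|^{2n-1}_{\mathfrak{B}(L^2(G)\cdot p)}\|m\|_{L^2}$ and CHH finishes it), but some such argument must be supplied. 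If you want to salvage your corner-reduction for the whole ideal, the cleanest repair is to prove injectivity rather than isometry: a $*$-homomorphism of $C^*$-algebras is isometric iff injective, and $\lambda(T)|_{L^2(G)\cdot p}=0$ forces $\lambda(pxTyp)|_{L^2(G)\cdot p}=0$ for all $x,y$, hence $pxTyp=0$ by the corner case, hence $JTJ=0$ and $T=0$ for $T$ in the ideal $J$.
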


\begin{proof}[Proof of Lemma \ref{lem:I_S_faithful}] 
Let us write $\lambda:C^*_r(G) \to \mathfrak{B}(L^2 (G){\cdot} p)$ for the left regular representation of $C^*_r(G)$ on $L^2 (G) {\cdot} p$.   It suffices to show that
\begin{equation}
    \label{eq-isometric-rep}
  \|\lambda(a)\|_{\mathfrak{B}(L^2(G){\cdot} p)} \geq \|a\|_{C^*_r(G)}
\end{equation}
for $a\in C_r^*(G) p C_r^*(G)$, since the reverse inequality is automatic.  Moreover, it suffices to prove this for a dense subset of elements, so let us consider  elements of the form 
\[
  a = \sum_{j=1}^m x_jpy_j\in C_r^*(G){\cdot} p {\cdot} C_r^*(G),
\]
with $x_j,y_j \in C_c(G)$.  
If we denote by  $\rho$   the right action of $C^*_r (G)$ on $L^2(G)$, then 
\begin{align*}
  \|(a^*a)^{n}\|_{L^2(G)}
    &\leq \sum_{j=1}^m \| (a^*a)^{n-1} a^* x_j p y_j\|_{L^2(G)} \\
    &\leq \sum_{j=1}^m \| (a^*a)^{n-1} a^* x_j p \|_{L^2(G)}\, \|\rho(y_j)\|_{\mathfrak{B}(L^2(G))} \\
    &\leq \| \lambda(a) \|^{2n-1}_{\mathfrak{B}(L^2(G){\cdot} p)} \sum_{j=1}^m \|x_jp\|_{L^2(G){\cdot} p} \,\|\rho(y_j)\|_{\mathfrak{B}(L^2(G))}.
\end{align*}
Taking  $2n^{\mathrm{th}}$ roots, and then taking the limit as $n$ tends to infinity,  we obtain \eqref{eq-isometric-rep} from Theorem~\ref{thm-chh-formula}.
\end{proof}

\subsection{Spectrum of the compact ideal}
We can now complete the proof of Theorem~\ref{thm:Spec_J}:

\begin{lemma}
\label{lem-spectrum-of-compact-ideal-is-only-sq-int-reps}
    Every irreducible unitary representation in the spectrum of the compact ideal, when viewed as an irreducible unitary representation of $G$, is a square-integrable representation.
\end{lemma}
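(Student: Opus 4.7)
The plan is to exhibit any $\pi$ in the spectrum of $C^*_{\cpt}(G)$ as a subrepresentation of the left regular representation on $L^2(G)$.  The main tools will be the faithfulness established in Lemma~\ref{lem:I_S_faithful} together with the structure theory of $C^*$-algebras of compact operators.

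First, I will cut down to a single $K$-type present in $\pi$.  Since $K$ is compact and acts unitarily and nontrivially on $H_\pi$, there is an irreducible unitary representation $V$ of $K$ whose associated central projection $p\in C^*(K)$ satisfies $\pi(p)\neq 0$.  Because $\pi$ is irreducible as a representation of $C^*_r(G)$, this forces $\pi$ to be nonvanishing on the two-sided ideal $I_p:=\overline{C^*_r(G)\cdot p\cdot C^*_r(G)}$, and hence also on the intersection
\[
J:=C^*_{\cpt}(G)\cap I_p,
\]
via irreducibility applied to the product $C^*_{\cpt}(G)\cdot I_p\subseteq J$.

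Next, I will analyze how $J$ acts on $L^2(G)\cdot p$ through the left regular representation.  By Lemma~\ref{lem:I_S_faithful} the ideal $I_p$ acts faithfully on $L^2(G)\cdot p$, and therefore so does $J$.  Using the identification $L^2(G)\cdot p\cong L^2(G/K;V^*)\otimes V$ with $V$ finite-dimensional, every element of $C^*_{\cpt}(G)$, and a fortiori of $J$, acts on $L^2(G)\cdot p$ by a compact operator.  Thus $J$ embeds as a $C^*$-subalgebra of $\mathfrak{K}(L^2(G)\cdot p)$, and the structure theorem for $C^*$-algebras of compact operators says that every irreducible representation of $J$ is realized on some closed $J$-invariant subspace of $L^2(G)\cdot p$.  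Applied to $\pi|_J$, this produces a closed subspace $H_0\subseteq L^2(G)\cdot p\subseteq L^2(G)$ on which $J$ acts irreducibly and equivalently to $\pi|_J$.

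Finally, since $J$ has an approximate unit we have $\overline{J\cdot H_0}=H_0$, and since $J$ is an ideal in $C^*_r(G)$,
\[
C^*_r(G)\cdot H_0=C^*_r(G)\cdot \overline{J\cdot H_0}\subseteq \overline{J\cdot H_0}=H_0.
\]
So $H_0$ is a closed $G$-invariant subspace of $L^2(G)$, and the representation of $C^*_r(G)$ it carries is an irreducible extension of $\pi|_J$ from the ideal $J$; by uniqueness of such extensions it must be equivalent to $\pi$.  This embeds $\pi$ into $L^2(G)$, so $\pi$ is square-integrable.  The step I would watch most carefully is the combination of faithfulness from Lemma~\ref{lem:I_S_faithful} with compactness of the $J$-action on $L^2(G)\cdot p$, since together these are what allow the structure theorem to produce $H_0$; the upgrade from $J$-invariance to $G$-invariance is then a clean ideal argument.
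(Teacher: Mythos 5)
Your proof is correct and follows essentially the same route as the paper's: both arguments combine the isometric (hence faithful) action from Lemma~\ref{lem:I_S_faithful} on $L^2(G){\cdot}p$ with the compactness of that action coming from the definition of the compact ideal, and then invoke the structure theorem for $C^*$-algebras of compact operators to realize $\pi$ as a subrepresentation of $L^2(G)$. The only difference is bookkeeping --- the paper works with the ideals $J_p=\overline{C^*_{\cpt}(G){\cdot}p{\cdot}C^*_{\cpt}(G)}$ whose union is dense in $C^*_{\cpt}(G)$, while you select a single $K$-type with $\pi(p)\neq 0$ and use $C^*_{\cpt}(G)\cap \overline{C^*_r(G){\cdot}p{\cdot}C^*_r(G)}$ --- which changes nothing essential.
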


\begin{proof} 
For each central projection $p\in C^*(K)$ write  
\begin{equation}
    \label{eq-def-of-j-p}
J_p =   \overline{C_{\cpt}^*(G){\cdot}p{\cdot} C_{\cpt}^*(G)}.
\end{equation}
This is a closed, two-sided ideal in $C^*_{\cpt}(G)$ and in $C^*_r (G)$. Moreover the union of the direct system of all $J_p$ is dense in $C^*_{\cpt}(G)$, and as a result 
\begin{equation}
    \label{eq-spectrum-of-compact-ideal-as-increasing-union}
\widehat{C_{\cpt}^*(G)} 
= \bigcup_{p}  \widehat{J_p}.
\end{equation}
So it suffices to prove that each irreducible representation of each $J_p$ is a square-integrable representation of $G$.

It follows from Lemma~\ref{lem:I_S_faithful} that 
$J_p$ is faithfully represented on the Hilbert space $L^2(G){\cdot} p$, where, according to the definition of the compact ideal, it acts as compact operators:
\begin{equation}
    \label{eq-j-p-is-c-zero-direct-sum}
J_p \stackrel{\text{1-1}}\longrightarrow \mathfrak{K}(L^2(G){\cdot} p).
\end{equation}
This means that $J_p$ is abstractly isomorphic to a $c_0$-direct sum of compact operator algebras, and that all of its irreducible representations occur as subrepresentations of the action of $J_p$ on $L^2(G){\cdot}p$; see \cite[Thm.\,1.4.4]{ArvesonInvitation}.  In particular, every irreducible representation of $J_p$ occurs as a subrepresentation of $L^2(G)$, and so, viewed as a representation of $G$, it is a square-integrable representation.
\end{proof}

\begin{lemma}
\label{lem-point-is-closed}
    Let $\pi$ be an irreducible representation of a $C^*$-algebra $A$.  If $\pi(A) = \mathfrak{K}(H_\pi)$, then the singleton $\{ \pi\}$ is a closed subset of the spectrum of $A$.
\end{lemma}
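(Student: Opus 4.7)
The plan is to work with the Jacobson (hull–kernel) topology on the spectrum $\widehat{A}$ and show that no irreducible representation inequivalent to $\pi$ lies in the closure of $\{\pi\}$. Recall that in the hull–kernel topology, the closure of a set $S \subseteq \widehat{A}$ consists precisely of those irreducible representations whose kernel contains $\bigcap_{\sigma \in S} \ker(\sigma)$. Applied to the singleton $S = \{\pi\}$, this says
\[
\overline{\{\pi\}} = \{\, \sigma \in \widehat{A} : \ker(\sigma) \supseteq \ker(\pi)\,\}.
\]
So it suffices to show that any irreducible $\sigma$ with $\ker(\sigma) \supseteq \ker(\pi)$ is unitarily equivalent to $\pi$.

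The hypothesis $\pi(A) = \mathfrak{K}(H_\pi)$ gives an isomorphism $A/\ker(\pi) \cong \mathfrak{K}(H_\pi)$. Given $\sigma \in \widehat{A}$ with $\ker(\sigma) \supseteq \ker(\pi)$, the representation $\sigma$ factors through this quotient to give an irreducible representation of $\mathfrak{K}(H_\pi)$. At this point I would invoke the classical fact that the $C^*$-algebra $\mathfrak{K}(H)$ of compact operators on a Hilbert space $H$ has, up to unitary equivalence, a unique irreducible representation, namely its defining representation on $H$ (see, e.g., \cite[Thm.\,1.4.4]{ArvesonInvitation}, which has already been cited in the excerpt). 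Consequently $\sigma$ is unitarily equivalent to $\pi$, which completes the proof.

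There is no serious obstacle here; the argument is entirely formal once one unpacks the hull–kernel topology and invokes the uniqueness of the irreducible representation of $\mathfrak{K}(H_\pi)$. The only point worth flagging is that this argument does \emph{not} show $\{\pi\}$ is open in $\widehat{A}$; openness is the additional ingredient needed for $\pi$ to be a topological discrete series representation, and it is precisely that extra condition which is controlled by membership in the compact ideal.
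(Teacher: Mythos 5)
Your argument is correct and is essentially the paper's own proof: both identify the closure of $\{\pi\}$ in the hull--kernel topology with $\{\sigma \in \widehat{A} : \ker(\sigma) \supseteq \ker(\pi)\}$ and then use the fact that $\mathfrak{K}(H_\pi)$ has a unique irreducible representation up to equivalence to conclude this set is just $\{\pi\}$. Your closing remark correctly separates closedness from openness, the latter being handled elsewhere via the structure of the ideals $J_p$.
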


\begin{proof}
This is well known; since $A/\operatorname{ker}(\pi)\cong \mathfrak{K}(H_\pi)$, and since $\mathfrak{K}(H_\pi)$ has a unique irreducible representation up to equivalence, the singleton $\{ \pi\}$ is equal to $\{\tau\in \widehat A : \tau(\operatorname{ker}(\pi))=0\}$ which is by definition a closed set in the spectrum.
\end{proof}

\begin{theorem} 
\label{thm-second-part-of-spectrum-of-compact-ideal-theorem}
Every irreducible unitary representation in the spectrum of the compact ideal is a $K$-admissible topological discrete series representation.
\end{theorem}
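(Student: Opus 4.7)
The plan is to exploit the decomposition of $J_p$ as a $c_0$-direct sum of compact operator algebras, together with the matrix-coefficient description of square-integrable representations. Let $\pi$ be an irreducible representation in the spectrum of $C^*_{\cpt}(G)$. By Lemma~\ref{lem-spectrum-of-compact-ideal-is-only-sq-int-reps} it is square-integrable, and by \eqref{eq-spectrum-of-compact-ideal-as-increasing-union} it lies in $\widehat{J_p}$ for some central projection $p = p_V \in C^*(K)$ corresponding to an irreducible $K$-type $V$.  As in the proof of that lemma, the faithful embedding $J_p \hookrightarrow \mathfrak{K}(L^2(G)\cdot p)$ yields an isomorphism $J_p \cong \bigoplus_i \mathfrak{K}(H_i)$ by \cite[Thm.\,1.4.4]{ArvesonInvitation}, with $\pi$ realized as the irreducible summand $H_{i_0}$ for a unique index $i_0$, and with multiplicity space $N_{i_0}$ inside $L^2(G)\cdot p$.

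First I would show that $N_{i_0}$ is finite-dimensional.  A rank-one projection $P \in \mathfrak{K}(H_{i_0})$ acts on the summand $H_{i_0}\otimes N_{i_0} \subseteq L^2(G)\cdot p$ as $P\otimes 1_{N_{i_0}}$, which has rank $\dim N_{i_0}$; since $J_p$ consists of compact operators on $L^2(G)\cdot p$, this rank must be finite.  Consequently the $J_p$-equivariant multiplicity $\dim \Hom_{J_p}(H_\pi, L^2(G)\cdot p) = \dim N_{i_0}$ is finite, and since every $G$-equivariant map is \emph{a fortiori} $J_p$-equivariant, the $G$-equivariant multiplicity satisfies $\dim \Hom_G(H_\pi, L^2(G)\cdot p) \leq \dim N_{i_0} < \infty$.

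Next I would identify this $G$-equivariant multiplicity with an invariant of $\pi|_K$.  Because $\pi$ is square-integrable, every $G$-equivariant embedding $H_\pi \hookrightarrow L^2(G)$ is of the form $v \mapsto \varphi_{w,v}$ from \eqref{eq-matrix-coefficient-function}, parametrized by $w \in H_\pi$.  A direct calculation gives $R_k \varphi_{w,v} = \varphi_{\pi(k) w, v}$, so such an embedding has image inside the right-$V$-isotypic subspace $L^2(G)\cdot p$ precisely when $w$ lies in the $V$-isotypic subspace of $\pi|_K$.  Hence $\dim \Hom_G(H_\pi, L^2(G)\cdot p)$ equals the dimension of the $V$-isotypic subspace of $\pi|_K$, and in particular $V$ occurs in $\pi|_K$ with finite multiplicity.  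Since every $K$-type that actually appears in $\pi|_K$ is reached in this way \textup{(}and other $K$-types contribute trivially with multiplicity zero\textup{)}, $\pi$ is $K$-admissible.

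With admissibility established, both conclusions of the theorem follow.  The argument in the proof of Lemma~\ref{lem-admissible-tds-implies-square-integrable} shows that $K$-admissibility combined with irreducibility of $\pi$ forces $\pi(C^*_r(G)) = \mathfrak{K}(H_\pi)$, and then Lemma~\ref{lem-point-is-closed} gives that $\{\pi\}$ is closed in $\widehat{C^*_r(G)}$.  For openness, the decomposition $J_p \cong \bigoplus_i \mathfrak{K}(H_i)$ endows $\widehat{J_p}$ with the discrete topology, so $\{\pi\}$ is open in $\widehat{J_p}$, which is open in $\widehat{C^*_{\cpt}(G)}$, which is in turn open in $\widehat{C^*_r(G)}$ because $C^*_{\cpt}(G)$ is an ideal.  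The main delicate point is in the matrix-coefficient step, where one must carefully keep track of the distinction between the right action of $K$ used to define $L^2(G)\cdot p$ and the left action of $K$ on $H_\pi$ \textup{(}restriction of the $G$-action\textup{)} that underlies $K$-admissibility.
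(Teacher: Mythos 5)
Your proof is correct and follows essentially the same route as the paper: square-integrability from Lemma~\ref{lem-spectrum-of-compact-ideal-is-only-sq-int-reps}, the matrix-coefficient realization of copies of $H_\pi$ inside $L^2(G){\cdot}p$ to convert compactness into finiteness of $K$-multiplicities, discreteness of $\widehat{J_p}$ for openness, and Lemma~\ref{lem-point-is-closed} for closedness. The one step you should make explicit is the claim that \emph{every} $K$-type $W$ occurring in $\pi|_K$ is ``reached in this way'': your multiplicity bound is extracted from the decomposition of $J_{p_W}$ acting on $L^2(G){\cdot}p_W$, so you need $\pi(J_{p_W})\neq 0$ for each such $W$, which does hold because $\pi$ restricts to a nondegenerate irreducible representation of the ideal $C^*_{\cpt}(G)$ and $\pi(p_W)\neq 0$, whence $\pi\bigl(C^*_{\cpt}(G)\, p_W\, C^*_{\cpt}(G)\bigr)\neq 0$. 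The paper sidesteps this issue by fixing a single $f\in C^*_{\cpt}(G)$ with $\pi(f)\neq 0$ and using that it acts compactly on $[L^2(G)\otimes V]^K$ for all $V$ simultaneously via the inclusion \eqref{eq-inclusion-from-square-integrable-matrix-coeffs}, which is marginally more economical than your rank-one-projection argument but buys nothing essentially different.
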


\begin{proof}
We shall use the ideals $J_p$ from \eqref{eq-def-of-j-p} in the proof of Lemma~\ref{lem-spectrum-of-compact-ideal-is-only-sq-int-reps}, along with the formula \eqref{eq-spectrum-of-compact-ideal-as-increasing-union}. Because of the latter, it suffices to prove that each irreducible representation   of each $J_p$ is a $K$-admissible topological discrete series representation.

In the proof of Lemma~\ref{lem-spectrum-of-compact-ideal-is-only-sq-int-reps} we showed  that $J_p$ is a $c_0$-direct sum of compact operator algebras, and as a result, its spectrum carries the discrete topology, in which every singleton set  $\{ \pi\}$ is open. Since $J_p$ is an ideal in $C^*_r(G)$, its spectrum is an open subset of the spectrum of $C^*_r(G)$ ---- which is the reduced dual of $G$ --- and so $\{ \pi\}$ is an open subset of the reduced dual, too.

Let us show next  that every irreducible representation $\pi$ of the compact ideal is $K$-admissible. According to Lemma~\ref{lem-spectrum-of-compact-ideal-is-only-sq-int-reps}, $\pi$, viewed as an irreducible unitary  representation of $G$, is square-integrable, and as we noted in \eqref{eq-embedding-sq-int-rep-into-regular-rep} it follows that $L^2(G)$ includes the subrepresentations 
\[
    H_w = \{\, \varphi_{ w,v} \in L^2(G) : v\in H_\pi\,\}
\]
with $w\in H_\pi$ and $\varphi_{ w,v}$ the matrix coefficient function in \eqref{eq-matrix-coefficient-function}, all of which are equivalent to $\pi$.  They generate a closed subspace of $L^2(G)$ that is isomorphic to $H_\pi \otimes \overline H_\pi$ as a left and right representation of $G$.  We obtain, therefore  inclusions 
\begin{equation}
    \label{eq-inclusion-from-square-integrable-matrix-coeffs}
H_\pi \otimes [\overline H_\pi \otimes V]^K \longrightarrow [L^2(G)\otimes V]^K
\end{equation}
of representations of $G$ for all finite-dimensional unitary representations $V$ of $K$.  Now, there is some $f$ in the compact ideal with $\pi(f)\ne 0$.  Because $f$ acts as a compact operator on the right-hand side of \eqref{eq-inclusion-from-square-integrable-matrix-coeffs}, it must act as a compact operator on the left-hand side, too, which implies the finite-dimensionality of $[\overline H_\pi \otimes V]^K$ and so the admissibility of $\pi$.

Finally,  $K$-admissibility implies that $\pi (C^*_r(G))=\mathfrak{K}(H_\pi)$, as explained in the proof of Lemma~\ref{lem-admissible-tds-implies-square-integrable}, so by Lemma~\ref{lem-point-is-closed} the singleton $\{ \pi\}$ is closed.
\end{proof}

\subsection{The compact ideal and real reductive groups}
\label{sec:noncompact_center}
If $G$ is a real reductive group with compact center, then the result about the compact ideal that we shall require is precisely Theorem~\ref{thm:Spec_J} above (in the context of real reductive groups we shall always choose  $K$ to be  a maximal compact subgroup). If $G$ has noncompact center, then we shall need to modify Theorem~\ref{thm:Spec_J}, as described in this section.

If $G$ is any real reductive group, then there is a direct product decomposition 
\begin{equation}
    \label{eq-split-decomposition}
G = \circG \times A_{\Sigma}
\end{equation}
called the \emph{split decomposition} of $G$, in which:
\begin{enumerate}[\rm (i)] 

\item $A_{\Sigma}$ consists of all elements in the center of $G$ that are inverted by the Cartan involution (the notation is taken from \cite[Sec.\,3]{BraddHigsonYunckenOshimaPart1} and will be reviewed here in Section~\ref{sec-parabolic-subgroups}), and

\item $\circG$ is generated, as a closed subgroup of $G$, by all of the compact subgroups of $G$.

\end{enumerate}
See for instance  \cite[Prop.\,7.22(f)]{KnappBeyond}, and see Section~\ref{sec-parabolic-subgroups} for more information about our  conventions regarding real reductive groups.  The group $\circG$ has compact center, and if $G$ has compact center then $\circG = G$.

From \eqref{eq-split-decomposition} we obtain an isomorphism 
\[
C^*_r (G) \stackrel \cong \longrightarrow C^*_r (\circG)\otimes C^*_r (A_{\Sigma}),
\]
using which we may define  a  \emph{compact modulo center} ideal   in $C^*_r(G)$ by
\begin{equation}
    \label{eq-def-of-cmc-ideal}
C^*_{\cmc} (G) \triangleleft C^*_r (G),
\qquad C^*_{\cmc} (G)\stackrel \cong \to C^*_{\cpt}(\circG)\otimes C^*_r (A_{\Sigma}) .
\end{equation}
Theorem~\ref{thm:Spec_J} implies that:
\begin{theorem}
    \label{thm-spectrum-of-cmc-ideal}
    Under the split decomposition in \eqref{eq-split-decomposition} spectrum of the closed, two-sided ideal $C^*_{\cmc} (G) \triangleleft C^*_r (G)$ consists precisely of the tensor products of admissible topological discrete series representations of $\circG$ with unitary characters of the abelian group $A_{\Sigma}$. \qed
\end{theorem}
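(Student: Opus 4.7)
The plan is to deduce the theorem from Theorem~\ref{thm:Spec_J} applied to $\circG$, together with the standard description of the spectrum of a $C^*$-tensor product with a commutative factor. Since $A_{\Sigma}$ is abelian, Fourier transform identifies $C^*_r(A_{\Sigma})$ with $C_0(\widehat{A_{\Sigma}})$, where $\widehat{A_{\Sigma}}$ is the Pontryagin dual of unitary characters. In particular $C^*_r(A_{\Sigma})$ is nuclear, so the tensor products in \eqref{eq-split-decomposition} and \eqref{eq-def-of-cmc-ideal} are unambiguous.

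The first step is to recall that for any $C^*$-algebra $A$ and any locally compact Hausdorff space $X$ there is a canonical isomorphism $A\otimes C_0(X)\cong C_0(X,A)$, whose spectrum is naturally identified with $\widehat A\times X$ via
\[
(\pi,x)\longmapsto \bigl(f\mapsto \pi(f(x))\bigr).
\]
Applying this with $A=C^*_r(\circG)$ and $X=\widehat{A_{\Sigma}}$ identifies every irreducible representation of $C^*_r(G)$ with a tensor product $\sigma\otimes \chi$, where $\sigma$ belongs to the reduced dual of $\circG$ and $\chi$ is a unitary character of $A_{\Sigma}$. The same construction applied to $C^*_{\cpt}(\circG)\otimes C^*_r(A_{\Sigma})$ identifies the spectrum of $C^*_{\cmc}(G)$ with $\widehat{C^*_{\cpt}(\circG)}\times \widehat{A_{\Sigma}}$.

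The second step is to combine this with the standard fact that the spectrum of an ideal embeds as an open subset of the spectrum of the ambient algebra, consisting of those irreducible representations that are nonzero on the ideal. Under the tensor product identifications above, the representation $\sigma\otimes\chi$ is nonzero on $C^*_{\cmc}(G)$ if and only if $\sigma$ is nonzero on $C^*_{\cpt}(\circG)$, i.e.\ if and only if $\sigma$ lies in the spectrum of $C^*_{\cpt}(\circG)$. Since $\circG$ has compact center, Theorem~\ref{thm:Spec_J} applies to $\circG$ and identifies this set with the $K$-admissible topological discrete series representations of $\circG$, completing the proof.

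No significant obstacle is anticipated: the argument is essentially a bookkeeping exercise built around Theorem~\ref{thm:Spec_J}. The only point requiring care is the tensor product description of the spectrum, which is standard once one observes that $C^*_r(A_{\Sigma})$ is commutative and hence nuclear, so that the minimal and maximal $C^*$-tensor products agree.
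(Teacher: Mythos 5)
Your argument is correct and follows exactly the route the paper intends: the paper simply asserts that Theorem~\ref{thm:Spec_J} implies Theorem~\ref{thm-spectrum-of-cmc-ideal} and leaves the deduction to the reader, and your tensor-product bookkeeping (identifying $C^*_r(A_{\Sigma})$ with $C_0(\widehat{A_{\Sigma}})$, describing the spectrum of $C^*_r(\circG)\otimes C_0(\widehat{A_{\Sigma}})$ as pairs $\sigma\otimes\chi$, and restricting to the ideal) is precisely the missing detail. The one point worth noting is that Theorem~\ref{thm:Spec_J} applies to $\circG$ with the same maximal compact $K$, which is legitimate since $K\subseteq\circG$; your proposal handles this correctly.
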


\begin{definition}
\label{def-tds-mod-center}
    We shall say that an irreducible tempered representation $\pi$ is an \emph{admissible topological discrete series representation, modulo center} if it has the form given in the statement of  Theorem~\ref{thm-spectrum-of-cmc-ideal} above.
\end{definition}
The group $A_{\Sigma}$ acts freely and properly on $G/K$ (we shall think of this as a right action, although of course $A_{\Sigma}$ is central, so it is also the left action).  It therefore acts diagonally on $G/K\times G/K$ and we shall write the quotient manifold as 
\[
\Groupoid_\Sigma = G/K\times_{A_{\Sigma}} G/K ;
\]
the notation is related to the structure of the Satake compactification, as will be reviewed later. If we view  functions $k \in \Cc  (\Groupoid_\Sigma)$ as \continuous\ functions on $G/K\times G/K$ that are   $A_{\Sigma}$-invariant for the diagonal action, then $\Cc  (\Groupoid_\Sigma)$ generates a $C^*$-algebra of operators 
\[
C^*_r(\Groupoid_\Sigma) \subseteq \mathfrak{B} (L^2(G/K))
\]
using the usual formula for integral operators on $L^2 (G/K)$; it is of course a grou\-poid $C^*$-algebra.

More generally, if $V$ is a finite-dimensional unitary representation of $K$, then we may  form the Hilbert space $[L^2(G)\otimes V]^K$, which is the Hilbert space of $L^2$-sections of the vector bundle over $G/K$ induced from $V$, and we may also form the space 
\[
\Cc (\Groupoid_\Sigma; V) = \bigl [ \Cc  ( G\times_{A_{\Sigma}} G) \otimes \End (V)\bigr]^{K\times K} ,
\]
where the first copy of $K$ acts on the right of the first copy of $G$ and on the left of $\End(V)$; and the second copy of $K$ acts on the right of the second copy of $G$ and on the right of $\End(V)$.  This identifies with the space of \continuous\  compactly supported sections of the  bundle over $\Groupoid_\Sigma$ that is obtained by pulling back the vector bundles associated to $V$ and $V^*$ along the two projection maps $\Groupoid_\Sigma \rightrightarrows G/K$, and then forming the tensor product, as we shall discuss in more detail later. But in any case we may form the completion 
\[
C^*_r(\Groupoid_\Sigma;V) \subseteq \mathfrak{B}  ( L^2(G;V)  ).
\]

\begin{lemma} 
\label{lem-groupoid-c-star-algebra-as-compacts}
The unitary isomorphism
\[
L^2(G;V)  \stackrel \cong \longrightarrow L^2(\circG/K;V) \otimes L^2 (A_{\Sigma})
\]
given by the split decomposition \eqref{eq-split-decomposition} conjugates the $C^*$-algebra $C^*_r (\Groupoid_\Sigma; V)$ to the $C^*$-algebra
\[
\mathfrak{K} \bigl ( L^2(\circG/K;V) \bigr ) \otimes C^*_r (A_{\Sigma})\subseteq \mathfrak{B}\bigl (L^2(\circG/K;V) \otimes L^2 (A_{\Sigma}) \bigr )
\]
\end{lemma}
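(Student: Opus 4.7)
The plan is to realise both $C^*$-algebras as completions of the same algebra of integral kernels. First, the split decomposition $G = \circG \times A_\Sigma$ (with $K \subseteq \circG$) gives $G/K \cong \circG/K \times A_\Sigma$, and the diagonal action of $A_\Sigma$ on $G/K \times G/K$ is trivial on the $\circG/K$ factors and translates the two $A_\Sigma$ factors simultaneously. Thus
\[
\Groupoid_\Sigma = G/K \times_{A_\Sigma} G/K \;\cong\; \circG/K \times \circG/K \times A_\Sigma
\]
via $[(\circg_1 a_1, \circg_2 a_2)] \mapsto (\circg_1 K, \circg_2 K, a_1 a_2^{-1})$. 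Correspondingly, an element of $\Cc(\Groupoid_\Sigma; V)$ becomes a $K \times K$-equivariant, continuous, compactly supported $\End(V)$-valued kernel $k(x_1, x_2, a)$ on $\circG/K \times \circG/K \times A_\Sigma$.

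Next, under the decomposition $L^2(G;V) \cong L^2(\circG/K;V) \otimes L^2(A_\Sigma)$, a direct change-of-variables computation shows that the integral operator on $L^2(G;V)$ attached to such a $k$ is
\[
T_k \;=\; \int_{A_\Sigma} K(a) \otimes L(a)\, da,
\]
where $K(a)$ is the kernel operator on $L^2(\circG/K;V)$ with kernel $k(\,\cdot\,,\,\cdot\,,a)$ and $L(a)$ denotes translation by $a$ on $L^2(A_\Sigma)$. It is enough to check this on the dense subspace spanned by elementary tensors $k = k_0 \otimes f$ with $k_0 \in \Cc(\circG/K \times \circG/K;\End V)^{K\times K}$ and $f \in \Cc(A_\Sigma)$, where one gets explicitly $T_k = K_{k_0} \otimes \lambda(f)$, with $K_{k_0}$ the kernel operator on $L^2(\circG/K;V)$ and $\lambda(f) = \int_{A_\Sigma} L(a) f(a)\, da \in C^*_r(A_\Sigma)$ the convolution operator.

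Two density statements then close the argument. First, the operators $K_{k_0}$ are Hilbert--Schmidt, hence compact, and their span is norm-dense in $\mathfrak{K}(L^2(\circG/K;V))$: any rank-one operator $|\xi\rangle\langle \eta|$ is approximated in operator norm by kernel operators with kernel $(x_1,x_2) \mapsto \xi(x_1)\otimes \overline{\eta(x_2)}$, truncated to compact support and averaged over $K\times K$. Second, $\{\lambda(f) : f \in \Cc(A_\Sigma)\}$ is norm-dense in $C^*_r(A_\Sigma)$ by definition. Since $\mathfrak{K}$ is nuclear, the spatial tensor product $\mathfrak{K}(L^2(\circG/K;V)) \otimes C^*_r(A_\Sigma)$ acting on $L^2(\circG/K;V) \otimes L^2(A_\Sigma)$ is unambiguous, and the closure of the algebraic tensor product of these dense subalgebras fills it out.

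The step requiring the most care is matching the residual $A_\Sigma$-variable in $\Groupoid_\Sigma$ with the translation parameter on $L^2(A_\Sigma)$: one must verify by an explicit change of variables that this variable enters as a translation, not as a pointwise multiplication, on the $L^2(A_\Sigma)$ factor. Once this bookkeeping is straightened out, the identification is a routine unwrapping of definitions.
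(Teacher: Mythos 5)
Your proof is correct and follows essentially the same route as the paper's: the paper's (very terse) proof cites exactly the diffeomorphism $G\times_{A_\Sigma}G\cong(\circG\times\circG)\times A_\Sigma$ and the characterization of compact operators as the norm closure of compactly supported kernel operators, which are the two ingredients you work out in detail. Your explicit verification that the residual $A_\Sigma$-variable acts by convolution (translation) on the $L^2(A_\Sigma)$ factor is the correct unwinding of the paper's one-line argument.
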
 

\begin{proof} 
This follows from the right $(K{\times}K)$-equivariant diffeomorphism
\[
\begin{gathered}
G\times_{A_{\Sigma}} G \stackrel \cong \longrightarrow 
(\circG\times \circG) \times A_{\Sigma}
\\
(\circg_2 a_2, \circg_1a_1) \longmapsto \bigl ( (\circg_2,\circg_1), a_2a_1^{-1}\bigr )
\end{gathered}
\]
and the usual characterization of the compact operators as the norm closure of the compactly supported smoothing operators.
\end{proof}

The following result offers a characterization of the compact modulo center ideal, along the lines of our definition of the compact ideal, that is more conceptual than the \emph{ad hoc} definition we gave in \eqref{eq-def-of-cmc-ideal}.

\begin{theorem}
    \label{theorem-comapct-mod-center-ideal}
 The    ideal $C^*_{\cmc} (G) \triangleleft C^*_r (G)$ is precisely the ideal of all  those elements in $C^*_r(G)$ that map into the $C^*$-algebras 
 \[
 C^*_r (\Groupoid_\Sigma ; V)\subseteq \mathfrak{B}( L^2(G/K;V) )
 \]
 under every regular representation 
 \[
 C^*_r (G) \longrightarrow  \mathfrak{B}  ( L^2(G/K;V) ),
 \]
 for every finite-dimensional unitary representation $V$ of $K$.
\end{theorem}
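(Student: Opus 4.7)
The plan is to unfold both sides of the claimed equality using the split decomposition \eqref{eq-split-decomposition} and reduce the assertion to a pointwise statement about continuous fields of operators over the Pontryagin dual $\widehat{A_{\Sigma}}$.

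Concretely, the split decomposition gives isomorphisms $C^*_r(G) \cong C^*_r(\circG) \otimes C^*_r(A_{\Sigma})$ and $L^2(G/K;V) \cong L^2(\circG/K;V) \otimes L^2(A_{\Sigma})$, under which the regular representation $\lambda_V$ factors as $\lambda_V^{\circG} \otimes \lambda^{A_{\Sigma}}$.  Since $A_{\Sigma}$ is abelian, the Gelfand--Plancherel picture gives $C^*_r(A_{\Sigma}) \cong C_0(\widehat{A_{\Sigma}})$ and, after Fourier transform, identifies $\lambda^{A_{\Sigma}}$ with multiplication on $L^2(\widehat{A_{\Sigma}})$.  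Using the standard identification $B \otimes C_0(\widehat{A_{\Sigma}}) \cong C_0(\widehat{A_{\Sigma}}, B)$ for any $C^*$-algebra $B$, an element $a \in C^*_r(G)$ is thereby exhibited as a continuous function $\widetilde{a}\colon \widehat{A_{\Sigma}} \to C^*_r(\circG)$ vanishing at infinity, and $\lambda_V(a)$ becomes the field of operators $\chi \mapsto \lambda_V^{\circG}(\widetilde{a}(\chi))$ acting on $L^2(\widehat{A_{\Sigma}}, L^2(\circG/K;V))$.  Applying the same identifications to Lemma~\ref{lem-groupoid-c-star-algebra-as-compacts} yields $C^*_r(\Groupoid_{\Sigma};V) \cong C_0(\widehat{A_{\Sigma}}, \mathfrak{K}(L^2(\circG/K;V)))$.

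The field $\chi \mapsto \lambda_V^{\circG}(\widetilde{a}(\chi))$ belongs to this subalgebra precisely when $\widetilde{a}(\chi)$ acts as a compact operator on $L^2(\circG/K;V)$ for every $\chi$; the required norm-continuity and $C_0$-decay are automatic from those of $\widetilde{a}$ together with the boundedness of $\lambda_V^{\circG}$.  Requiring this for every finite-dimensional unitary representation $V$ of $K$ is, by Definition~\ref{def-compact-ideal}, exactly the condition $\widetilde{a}(\chi) \in C^*_{\cpt}(\circG)$ for every $\chi \in \widehat{A_{\Sigma}}$.  Since $C^*_{\cpt}(\circG)$ is a closed subspace of $C^*_r(\circG)$, this pointwise condition is equivalent to $\widetilde{a} \in C_0(\widehat{A_{\Sigma}}, C^*_{\cpt}(\circG)) \cong C^*_{\cpt}(\circG) \otimes C^*_r(A_{\Sigma})$, which is $C^*_{\cmc}(G)$ by \eqref{eq-def-of-cmc-ideal}.

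The only genuinely non-formal point in the argument is the initial bookkeeping: verifying that under the tensor-product identifications coming from the split decomposition, both the regular representation $\lambda_V$ and the inclusion $C^*_r(\Groupoid_{\Sigma};V) \subseteq \mathfrak{B}(L^2(G/K;V))$ correspond to the same continuous-field structure over $\widehat{A_{\Sigma}}$.  Everything else is formal once the diagonalization of $\lambda^{A_{\Sigma}}$ by the Fourier transform is in hand.
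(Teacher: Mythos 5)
Your argument is correct and follows essentially the same route as the paper: both reduce the statement to the split decomposition $C^*_r(G)\cong C^*_r(\circG)\otimes C^*_r(A_\Sigma)$ together with Lemma~\ref{lem-groupoid-c-star-algebra-as-compacts} and the definition of $C^*_{\cpt}(\circG)$. The paper treats the resulting tensor-product identification as immediate, whereas you justify it by diagonalizing $\lambda^{A_\Sigma}$ via the Fourier transform and checking the condition pointwise over $\widehat{A_\Sigma}$; this is a legitimate and careful way of filling in the step the paper leaves implicit.
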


\begin{remark}
Lemma~\ref{lem-groupoid-c-star-algebra-as-compacts} shows that the $C^*$-subalgebra in the statement of the theorem is indeed an ideal.
\end{remark}

\begin{proof}[Proof of the Theorem] 
This follows from the commuting diagram
\[
\xymatrix{
C^*_{\cmc} (G) \ar[r]\ar[d]_{\cong} & \mathfrak{B}\bigl (L^2(G/K;V)\bigr)\ar[d]^{\cong}
\\
C^*_{\cpt} (\circG)\otimes C^*_r (A_{\Sigma}) \ar[r] &
\mathfrak{B}\bigl (L^2(\circG/K;V)\otimes L^2 (A_{\Sigma})\bigr )
}
\]
and the definition of $C^*_{\cpt} (\circG)$.
\end{proof}

\section{Parabolic induction}
\label{sec-parabolic-induction}

The purpose of this section is to record a $C^*$-algebraic characterization (in terms of ideals in the reduced group $C^*$-algebra) of the tempered irreducible representations of a real reductive group $G$  that may be embedded in parabolically induced representations.  The material below is quite well known; for instance it is implicit in \cite{ClaCriHig:parabolic_induction}.

\subsection{Parabolic subgroups} 
\label{sec-parabolic-subgroups}
We shall use the definition of  real reductive group  from  \cite[Sec.\,VII.2]{KnappBeyond}, which includes all the groups in  Harish-Chandra's class \cite[Sec.\,3]{HarishChandra75}; the two definitions  coincide   for linear   groups  \cite[pp.\,447-449]{KnappBeyond}. 

Choose an Iwasawa decomposition $G {=} KAN$ \cite[Prop.\,7.31]{KnappBeyond}.   The \emph{standard parabolic subgroups} of $G$ are the parabolic subgroups of $G$ \cite[Sec.\,VII.7]{KnappBeyond} that include the \emph{minimal parabolic subgroup} $P_{\min}{=}MAN$, where $M$ is the centralizer of $A$ in $K$. Our choice of  Iwasawa decomposition determines a system of positive restricted roots $\Delta^+(\mathfrak{g},\mathfrak{a})$, and within this a system of simple roots $\Sigma$. Associated to each subset $I\subseteq \Sigma$ is a standard parabolic subgroup
\begin{equation}
    \label{eq-standard-parabolic-subgroup}
P_I = M_I A_I N_I \subseteq G.
\end{equation}
The groups $P_I$ are all of the standard parabolic subgroups, and every other parabolic subgroup is conjugate to some $P_I$. For all this, see e.g.\ \cite[Sec.\,VII.7]{KnappBeyond}, or  see \cite[Sec.\,3.1]{BraddHigsonYunckenOshimaPart1} for a  summary adapted to the perspectives of this paper.

\begin{example}
    If $G=SL(n,\R)$, and if $P_{\min}$ is the subgroup of upper-trian\-gular matrices in $G$, then the standard parabolic subgroups are the various block upper-triangular subgroups of $G$, including both $P_\emptyset=P_{\min}$ and $P_\Sigma=G$ itself.
\end{example} 

\subsection{Parabolic induction via Hilbert C*-modules}
For the rest of Section~\ref{sec-parabolic-induction} we shall fix a real reductive group $G=KAN$ and a standard parabolic subgroup $P_I=M_IA_IN_I$.

The Cartesian product group $L_I = M_I A_I$ is the associated \emph{Levi factor}, and if $\sigma$ is a unitary representation of $L_I$, then we may form the unitarily induced representation 
$ \Ind_{P_I}^G \sigma $ \cite[Ch.\,VII]{Knapp:representation_theory} by first extending $\sigma$ to a representation of $P_I$ that is trivial on the normal subgroup $N_I$, and then applying the functor of unitary induction.  We shall always assume that $\sigma$  is weakly contained in the regular representation of $L_I$, in which case $ \Ind_{P_I}^G \sigma $ is weakly contained in the regular representation of $G$.   

\begin{remark}
    \label{rem-weak-containment} 
We shall use the notion of weak containment of representations,  see \cite[Def.\,3.4.5]{DixmierCStarBook},  extensively below. For most purposes (such as the one above) the definition of weak containment  is conveniently   formulated in terms of matrix coefficient functions. But it will be important to us that at the $C^*$-algebra level there is an equivalent formulation in terms of ideals:  $\pi$ is weakly contained in $ \rho$ if and only if $\operatorname{ker}(\rho)\subseteq \operatorname{ker}(\pi)$; see  \cite[Thm.\,3.4.3]{DixmierCStarBook}.
\end{remark}

The functor of parabolic induction has been recast in terms of Hilbert $C^*$-modules by Pierre Clare \cite{Clare:parabolic_induction}, and his perspective will be very convenient for us. What follows is a summary of the  variation on Clare's definitions that is appropriate for tempered representations; see \cite[Sec.\,4]{ClaCriHig:parabolic_induction} for more details. 

The  Banach space $C^*_r(G/N_I)$ is a certain norm completion of the space of \continuous, compactly supported functions on $G/N_I$.  It carries a   Hilbert $C^*_r (L_I)$-module structure associated to the free and proper right-translation action of $L_I$ on $G/N_I$, and a  unitary action of $G$ that integrates to a left action of $C^*_r(G)$ by adjointable operators on $C^*_r(G/N_I)$. Clare's main observation is as follows (see \cite[Ch.\,4]{LanceHilbertModules} for the tensor product involved in the statement, and for other information about Hilbert modules): 

\begin{proposition}[See {\cite[Cor.\,1]{Clare:parabolic_induction}]}]
\label{lem-clare-lemma}
The functor of parabolic induction \textup{(}from unitary representations of $L_I$ that are weakly contained in the regular representation to unitary representations of $G$ that are weakly contained in the regular representation\textup{)} is equivalent to the tensor product functor $H_\sigma \mapsto C_r^*(G/N_I) \otimes_{C_r^*(L_I)} H_\sigma$.
\end{proposition}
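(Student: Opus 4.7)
The plan is to construct, functorially in $\sigma$, a $G$-equivariant unitary isomorphism
\[
\Phi_\sigma \colon C^*_r(G/N_I) \otimes_{C^*_r(L_I)} H_\sigma \stackrel{\cong}{\longrightarrow} \Ind_{P_I}^G \sigma,
\]
where the left $G$-action on the interior tensor product is induced from the adjointable $G$-action on the first factor. First I would unpack the Hilbert module structure on $C^*_r(G/N_I)$: densely it is $C_c(G/N_I)$, with right $L_I$-action given by translation (well-defined because $L_I$ normalizes $N_I$), and $C^*_r(L_I)$-valued inner product obtained by integration along the $L_I$-orbits on the double fibration $G/N_I \to G/P_I$, twisted by an appropriate half-density factor involving $\delta_{P_I}^{1/2}$. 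On the other side, $\Ind_{P_I}^G \sigma$ is realized as $H_\sigma$-valued functions $f$ on $G$ satisfying $f(gp) = \delta_{P_I}^{-1/2}(p)\sigma(p)^{-1}f(g)$ for $p \in P_I$, square-integrable on $G/P_I$.

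Second, on elementary tensors $\phi \otimes v \in C_c(G/N_I) \otimes_{\mathrm{alg}} H_\sigma$ I would define
\[
\Phi_\sigma(\phi \otimes v)(g) \;=\; \int_{L_I} \delta_{P_I}^{1/2}(l)\,\phi(gl^{-1} N_I)\, \sigma(l)\,v\, dl,
\]
which by construction carries the correct $P_I$-equivariance on the right (the $N_I$-equivariance being automatic because $\phi$ is already a function on $G/N_I$). The heart of the argument is then the identity
\[
\bigl \| \Phi_\sigma(\phi \otimes v) \bigr \|^2_{\Ind_{P_I}^G \sigma} \;=\; \bigl \langle v,\, \sigma(\langle \phi,\phi\rangle_{C^*_r(L_I)})\, v \bigr \rangle_{H_\sigma},
\]
obtained by Fubini along the double fibration $G/N_I \to G/P_I$, with the modular function contributions on each side absorbed by $\delta_{P_I}$. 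This identity simultaneously shows that $\Phi_\sigma$ is balanced over $C^*_r(L_I)$ and extends to an isometry on the interior tensor product. Density of the image then follows from a partition-of-unity argument on $G/P_I$, and $G$-equivariance is immediate from the left-invariance of the defining integral. Naturality in $\sigma$ reduces to the observation that any $L_I$-intertwiner commutes pointwise with the factor $\sigma(l)$ inside the integral.

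The main obstacle is the bookkeeping of modular functions: both the Hilbert module inner product on $C^*_r(G/N_I)$ and the inner product on $\Ind_{P_I}^G \sigma$ are built from integrals on quotients of $G$, and the half-density twist by $\delta_{P_I}^{1/2}$ must appear in exactly the right places on each side for the isometry identity above to deliver a genuine unitary rather than a mere similarity. A secondary subtlety lies in the passage from the $C_c$-level to the $C^*$-level: the assumption that $\sigma$ is weakly contained in the regular representation of $L_I$ (equivalently, factors through $C^*_r(L_I)$, via Remark~\ref{rem-weak-containment}) is precisely what is needed for the interior tensor product with $C^*_r(G/N_I)$ to make sense as a $C^*_r(G)$-module, and for the resulting representation of $G$ to be weakly contained in the regular representation, as asserted.
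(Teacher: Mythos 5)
The paper offers no internal proof of this proposition --- it is quoted directly from Clare's work --- so the only comparison available is with the standard argument, which is essentially what you reconstruct. Your strategy (build an explicit intertwiner $\Phi_\sigma$ on elementary tensors, establish the identity $\|\Phi_\sigma(\phi\otimes v)\|^2=\langle v,\sigma(\langle\phi,\phi\rangle_{C^*_r(L_I)})v\rangle$, and invoke the universal property of the interior tensor product to get a well-defined isometry) is the right one, and you have correctly isolated that single identity as the crux: it is exactly what makes $\Phi_\sigma$ descend to $C^*_r(G/N_I)\otimes_{C^*_r(L_I)}H_\sigma$ and be unitary onto its image.

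There is, however, a concrete error in your displayed formula: with
\[
\Phi_\sigma(\phi\otimes v)(g)=\int_{L_I}\delta_{P_I}^{1/2}(l)\,\phi(gl^{-1}N_I)\,\sigma(l)v\,dl
\]
the resulting function does not satisfy the covariance you stipulate. Writing $p=l_0n$ with $l_0\in L_I$, $n\in N_I$, using $\phi(gl_0nl^{-1}N_I)=\phi(gl_0l^{-1}N_I)$ and substituting $l=l'l_0$, one finds
\[
\Phi_\sigma(\phi\otimes v)(gp)=\delta_{P_I}^{1/2}(l_0)\,\Phi_\sigma\bigl(\phi\otimes\sigma(l_0)v\bigr)(g),
\]
which is not of the form $\delta_{P_I}^{-1/2}(p)\sigma(p)^{-1}\Phi_\sigma(\phi\otimes v)(g)$: the operator $\sigma(l_0)$ acts on the wrong side and the modular exponent has the wrong sign. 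The fix is to integrate $\phi(glN_I)$ rather than $\phi(gl^{-1}N_I)$; with that change the $P_I$-covariance, the balancing over $C^*_r(L_I)$, and the isometry identity all go through, the last reducing after Fubini along $G/N_I\to G/P_I$ to the formula $\langle\phi,\psi\rangle_{C^*_r(G/N_I)}(\ell)=\langle\phi,\psi\cdot\ell^{-1}\rangle_{L^2(G/N_I)}$ recorded in the proof of Lemma~\ref{lem:l2_and_cstar_weakly_equivalent}. The remaining points --- density of the image, $G$-equivariance, naturality in $\sigma$, and the role of weak containment in making $H_\sigma$ a $C^*_r(L_I)$-module and the output weakly contained in the regular representation of $G$ --- are correctly identified and unproblematic.
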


\subsection{A C*-algebra characterization of parabolically induced representations}
We shall use Clare's perspective on parabolic induction to prove the following result: 

\begin{theorem}
\label{thm:weakly_in_quasireg_is_induced}
  The following two classes of  irreducible tempered representations   $G$ are equal to one another: 
    \begin{enumerate}[\rm (i)]
    
    \item The irreducible tempered representations of $G$ that may be embedded as subrepresentations of some parabolically induced representation $\Ind_{P_I}^G \sigma$, where $\sigma$ is some irreducible tempered representation of  $L_I$.
    
    \item The irreducible tempered representations of $G$ that are weakly contained in the regular representation of $G$ on $L^2 (G/N_I)$.
    \end{enumerate}
\end{theorem}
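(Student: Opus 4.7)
The strategy is to exploit Clare's Hilbert module formulation of parabolic induction (Proposition~\ref{lem-clare-lemma}): set $E = C^*_r(G/N_I)$, a right Hilbert $C^*_r(L_I)$-module carrying a left action $\phi\colon C^*_r(G) \to \mathcal{L}(E)$.  Then $\Ind_{P_I}^G H_\sigma \cong E \otimes_{C^*_r(L_I)} H_\sigma$ for any tempered $\sigma$, and, in particular, $E \otimes_{C^*_r(L_I)} L^2(L_I) \cong L^2(G/N_I)$ as $C^*_r(G)$-representations.

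\emph{The implication $\mathrm{(i) \Rightarrow (ii)}$} is quick.  By definition, every irreducible tempered representation $\sigma$ of $L_I$ is weakly contained in $L^2(L_I)$, and the Rieffel tensor product $E \otimes_{C^*_r(L_I)} (-)$ preserves weak containment in the kernel-containment sense of Remark~\ref{rem-weak-containment}.  Hence $\Ind_{P_I}^G \sigma$ is weakly contained in $L^2(G/N_I)$, and so is any of its subrepresentations.

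\emph{The implication $\mathrm{(ii) \Rightarrow (i)}$} relies on two structural facts about $E$, standard in the Hilbert module framework of \cite{Clare:parabolic_induction,ClaCriHig:parabolic_induction}: (a) $\phi$ factors through the subalgebra $\mathcal{K}(E) \subseteq \mathcal{L}(E)$ of ``compact'' Hilbert module operators; (b) the module $E$ realises a Morita equivalence $\mathcal{K}(E) \sim C^*_r(L_I)$, under which the irreducible $\mathcal{K}(E)$-representation attached to an irreducible tempered $\sigma$ of $L_I$ is precisely $E \otimes_{C^*_r(L_I)} H_\sigma = \Ind_{P_I}^G \sigma$; in particular, the representation $\mathcal{K}(E) \to \mathfrak{B}(L^2(G/N_I))$, $T \mapsto T \otimes 1$, is faithful, being the Morita transfer of the faithful regular representation of $C^*_r(L_I)$ on $L^2(L_I)$.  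Now suppose $\pi$ is an irreducible tempered representation of $G$ weakly contained in $L^2(G/N_I)$.  Faithfulness in (b) gives $\ker \phi \subseteq \ker \pi$, so $\pi$ descends to an irreducible representation of $A := \phi(C^*_r(G)) \subseteq \mathcal{K}(E)$.  Pick a unit vector $v\in H_\pi$ and form the pure state $\omega(a) = \langle \pi(a)v, v\rangle$ on $A$.  A standard Hahn-Banach/Krein-Milman argument produces a pure state $\tilde\omega$ on $\mathcal{K}(E)$ restricting to $\omega$; its GNS representation $\tilde\pi$ is an irreducible $\mathcal{K}(E)$-representation, which by (b) is isomorphic to $\Ind_{P_I}^G \sigma$ for some irreducible tempered $\sigma$.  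The $A$-cyclic subspace generated by the GNS cyclic vector $\tilde\xi$ realises, as an $A$-representation, the GNS of $\omega|_A = \omega$, which is $\pi$.  Pulling back via $\phi$ gives the desired embedding $\pi \hookrightarrow \Ind_{P_I}^G \sigma$ of $C^*_r(G)$-representations.

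\emph{Main obstacle.} The $C^*$-algebraic steps above --- factoring $\pi$ through $\phi$, extending pure states, identifying the enveloping GNS via Morita --- are routine once (a) and (b) are in hand.  The subtlest point is (b), specifically the fullness of the range ideal $\langle E, E\rangle_{C^*_r(L_I)}$ in $C^*_r(L_I)$, so that the Morita equivalence is with all of $C^*_r(L_I)$ rather than a proper ideal.  This is standard in the parabolic induction literature, following from density of $C_c(G/N_I)$ in $E$ together with density in $C^*_r(L_I)$ of the image of the $C^*_r(L_I)$-valued pairing $(f,g) \mapsto \langle f, g\rangle$.
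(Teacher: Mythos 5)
Your proof is correct and follows essentially the same route as the paper: Clare's module $E=C^*_r(G/N_I)$, the facts that $C^*_r(G)$ acts through $\mathfrak{K}(E)$ and that $E$ is a full Morita equivalence identifying the irreducible representations of $\mathfrak{K}(E)$ with the $\Ind_{P_I}^G\sigma$, and the extension of an irreducible representation from a $C^*$-subalgebra to the ambient algebra (which you reprove inline via pure-state extension, where the paper simply cites Dixmier). The only substantive variation is that you obtain the kernel identity $\ker\bigl(C^*_r(G)\to\mathfrak{B}(L^2(G/N_I))\bigr)=\ker\bigl(C^*_r(G)\to\mathfrak{K}(E)\bigr)$ softly, from faithfulness of the Morita-induced representation on $E\otimes_{C^*_r(L_I)}L^2(L_I)\cong L^2(G/N_I)$, whereas the paper's Lemma~\ref{lem:l2_and_cstar_weakly_equivalent} proves it by a direct computation with the $C^*_r(L_I)$-valued inner product; both are sound.
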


Denote by $\mathfrak{K}(C_r^*(G/N_I))$ the $C^*$-algebra of compact \textup{(}in the Hilbert module sense\textup{)} adjointable Hilbert module operators on $C^*_r (G/N_I)$ \cite[p.10]{LanceHilbertModules}.

\begin{lemma}[See {\cite[Prop.\,4.5]{ClaCriHig:parabolic_induction}}] 
\label{lem-compact-action-on-clare-module}
The $C^*$-algebra $C^*_r(G)$ acts on the Hilbert $C^*$-module $C^*_r (G/N_I)$ through the $C^*$-algebra $\mathfrak{K}(C_r^*(G/N_I))$ of compact Hilbert $C^*$-module operators on $C_r^*(G/N_I)$.
\end{lemma}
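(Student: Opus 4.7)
The plan is to verify that each $\lambda(f)$ with $f \in \Cc(G)$ acts on the Hilbert $C^*_r(L_I)$-module $C^*_r(G/N_I)$ as a compact Hilbert-module operator; the full statement then follows by taking the norm closure, because $\mathfrak{K}(C^*_r(G/N_I))$ is norm-closed in the adjointable operators and $\Cc(G)$ is dense in $C^*_r(G)$.

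The first step is to write $\lambda(f)$ as an integral operator on $G/N_I$ with an explicit kernel. Unwinding the left convolution, for $f \in \Cc(G)$ and $\xi \in \Cc(G/N_I)$ one finds
\begin{equation*}
(\lambda(f)\xi)(gN_I) \;=\; \int_{G/N_I} K_f(gN_I, yN_I)\, \xi(yN_I)\, d[y], \qquad K_f(gN_I, yN_I) = \int_{N_I} f(g n y^{-1})\, dn,
\end{equation*}
so $K_f$ is a compactly supported continuous function on $G/N_I \times G/N_I$, equivariant for the diagonal right $L_I$-action up to the modular character of the conjugation action of $L_I$ on $N_I$. In the same way, for $\xi,\eta \in \Cc(G/N_I)$ the rank-one Hilbert-module operator $\Theta_{\xi,\eta} : \zeta \mapsto \xi \cdot \langle \eta,\zeta\rangle_{C^*_r(L_I)}$ is an integral operator whose kernel is obtained by averaging $\xi(gl^{-1}N_I)\overline{\eta(yl^{-1}N_I)}$ over $l \in L_I$ with the same modular twist; it therefore lies in the same space of compactly supported, diagonally $L_I$-equivariant kernels on $G/N_I \times G/N_I$.

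It remains to approximate $K_f$ by finite sums $\sum_i K_{\xi_i,\eta_i}$ of elementary tensors in a norm that dominates the Hilbert-module operator norm; this is a standard partition-of-unity plus Stone--Weierstrass construction on the locally compact quotient $(G/N_I \times G/N_I)/L_I$, exploiting that the right $L_I$-action on $G/N_I$ is free and proper and therefore this quotient is Hausdorff and locally trivializable. The resulting finite sums $\sum_i \Theta_{\xi_i,\eta_i}$ converge to $\lambda(f)$ in adjointable-operator norm, placing $\lambda(f) \in \mathfrak{K}(C^*_r(G/N_I))$. The chief technical point is the bookkeeping of modular functions: although $G$ is unimodular, $L_I$ acts non-trivially on $N_I$, and the resulting modular character $\delta_I = |\det \Ad|_{\lie{n}_I}|$ appears in the definition of the $C^*_r(L_I)$-valued inner product on $C^*_r(G/N_I)$, in the kernel $K_{\xi,\eta}$, and in the equivariance condition on $K_f$; these normalizations must be matched consistently for the two kernel descriptions to identify the same function space, after which the approximation and closure arguments are routine.
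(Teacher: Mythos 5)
The paper itself does not prove this lemma: it is quoted from \cite[Prop.\,4.5]{ClaCriHig:parabolic_induction}, so there is no internal argument to compare against. Your strategy --- express $\lambda(f)$, $f\in \Cc(G)$, as a kernel operator, express the rank-one operators $\Theta_{\xi,\eta}$ as kernel operators of the same type, and approximate --- is the standard route and is sound in outline. Your kernel formula $K_f(gN_I,yN_I)=\int_{N_I}f(gny^{-1})\,dn$ is correct and well defined on $G/N_I\times G/N_I$, both kinds of kernels live in $\Cc$ of the groupoid $G/N_I\times_{L_I}G/N_I$, the elementary kernels $K_{\xi_i,\eta_i}$ span an inductive-limit-dense subspace of it, and the $I$-norm of that groupoid dominates the adjointable-operator norm; this last point, which you assert rather than name, is exactly Renault's equivalence theorem, the same Morita equivalence that underlies Lemma~\ref{lem-irreps-of-comapct-ops}.

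One intermediate claim is false as written and must be repaired. $K_f$ is \emph{not} compactly supported on $G/N_I\times G/N_I$: its support is contained in $\{(gN_I,yN_I): g\in (\supp f)\,y\,N_I\}$, which is unbounded as $y$ varies. Indeed no nonzero kernel can be simultaneously compactly supported on the product and equivariant, up to a positive character, for the diagonal action of the noncompact group $L_I$; the support would then be a nonempty compact invariant set for a free and proper action whose orbits are closed and noncompact. The statement you actually need --- and the one your quotient argument implicitly uses --- is that $K_f$, and likewise each $K_{\xi,\eta}$, is continuous, diagonally $L_I$-equivariant up to the modular character of $\Ad$ on $\lie{n}_I$, and has support with \emph{compact image in the quotient} $(G/N_I\times G/N_I)/L_I$. (Verifying this for $K_f$ takes a line: since $G/P_I$ is compact one may choose a compact $C\subseteq G/N_I$ meeting every $L_I$-orbit, and then every diagonal orbit in $\supp K_f$ meets the compact set $\{(g,y): y\in C,\ g\in (\supp f)\,C\,N_I\}$.) With that correction, and with the $I$-norm named as the norm controlling the operator norm, your approximation and closure steps go through.
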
 

\begin{lemma}
\label{lem-irreps-of-comapct-ops}
 Up to unitary equivalence, the 
irreducible representations of $\mathfrak{K}(C_r^*(G/N_I))$ are precisely those of the form
\[
\begin{aligned}
\mathfrak{K}(C_r^*(G/N_I)) 
    & \longrightarrow \mathfrak{K}\bigl (
C_r^*(G/N_I)\otimes _{C^*_r(L_I)} H_\sigma \bigr )
\\
T   
    & \longmapsto T\otimes I ,
\end{aligned}
\] 
where $\sigma$ is a tempered irreducible unitary representation of $L_I$.
\end{lemma}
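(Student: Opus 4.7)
My approach is to view this lemma as a direct instance of the Morita--Rieffel correspondence for Hilbert C*-modules. For a right Hilbert $B$-module $E$, the standard theorem asserts that interior tensor product $\sigma \mapsto E \otimes_B H_\sigma$ implements a bijection, up to unitary equivalence, between those irreducible representations of $B$ that are non-degenerate on the inner-product ideal $\langle E, E\rangle_B \triangleleft B$ and the irreducible representations of $\mathfrak{K}(E)$; under the bijection the latter acts by $T \mapsto T \otimes 1$. This is a textbook fact about Hilbert modules; I would simply invoke it from Lance's book, which is already cited in the text.

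The first real step is then to verify that $E = C^*_r(G/N_I)$ is \emph{full} as a right Hilbert $C^*_r(L_I)$-module, so that every tempered irreducible representation of $L_I$ actually participates in the correspondence. For $\varphi, \psi \in C_c(G/N_I)$ the $C^*_r(L_I)$-valued inner product is (up to a modular factor) of the form
\[
\langle \varphi, \psi \rangle_{C^*_r(L_I)}(l) = \int_{G/N_I} \overline{\varphi(x)}\, \psi(xl)\, dx,
\]
and since the right $L_I$-action on $G/N_I$ is free and proper, by localizing the supports of $\varphi$ and $\psi$ in a small slice transversal to an $L_I$-orbit one can realize any function in $C_c(L_I)$ as a finite sum of such inner products. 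Fullness then follows by density. This verification is in any case already worked out in the papers of Clare and of Clare--Crisp--Higson cited in the text, so this step reduces to a reference.

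The second step is the identification of $\widehat{C^*_r(L_I)}$ with the tempered irreducible dual of $L_I$ --- the Cowling--Haagerup--Howe principle used throughout the paper. Combining fullness with the Morita--Rieffel correspondence gives, up to equivalence, a bijection between tempered irreducible representations $\sigma$ of $L_I$ and irreducible representations of $\mathfrak{K}(C^*_r(G/N_I))$, realized exactly by $T \mapsto T \otimes 1$ on $C^*_r(G/N_I) \otimes_{C^*_r(L_I)} H_\sigma$. The only step with any substance is the fullness check; everything else is standard Hilbert-module theory. I therefore expect the proof in the paper to be short, essentially a pointer to Lance, together with either a short direct argument or a reference to Clare for fullness.
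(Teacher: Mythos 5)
Your proposal is correct and follows essentially the same route as the paper: the paper's proof simply notes that $C^*_r(G/N_I)$ is a full Hilbert $C^*_r(L_I)$-module, hence implements a strong Morita equivalence between $\mathfrak{K}(C^*_r(G/N_I))$ and $C^*_r(L_I)$ (citing Lance, Ch.~7), from which the correspondence $\sigma \mapsto T \otimes I$ on $C^*_r(G/N_I)\otimes_{C^*_r(L_I)}H_\sigma$ follows. Your additional sketch of the fullness verification via the free and proper $L_I$-action is more detail than the paper supplies, but it is the same argument in substance.
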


\begin{proof}  
The Hilbert $C^*$-module 
$C_r^*(G/N_I)$ is a full Hilbert $C_r^*(L_I)$-module (meaning that the closed two-sided ideal generated by all inner products is $C^*_r(L_I)$ itself). Therefore, considered as a  left $\mathfrak{K}(C_r^*(G/N_I))$-module,  the Hilbert $C^*_r(L_I)$-module 
$C_r^*(G/N_I)$ is   a strong Morita equivalence between  $\mathfrak{K}(C_r^*(G/N_I))$ and $C_r^*(L_I)$; see \cite[Ch.\,7]{LanceHilbertModules}. The lemma follows from this.
\end{proof}

\begin{lemma}
\label{lem:l2_and_cstar_weakly_equivalent}
The regular representations of $C^*_r(G)$ on $C^*_r(G/N_I)$  and  on $L^2 (G/N_I)$  have the same kernels: 
\[
 \ker\bigl(C_r^*(G) \to \mathfrak{K}(C_r^*(G/N_I))\bigr)
 =\ker\bigl(C_r^*(G) \to \mathfrak{B}(L^2(G/N_I))\bigr) .
\]
\end{lemma}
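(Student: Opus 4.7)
The plan is to realize the regular representation of $C^*_r(G)$ on $L^2(G/N_I)$ as an amplification of its representation on the Hilbert module $C^*_r(G/N_I)$, and then to show that the amplification is faithful on compact operators. Concretely, I would factor
\[
C^*_r(G) \xrightarrow{\ \lambda_{\mathrm{mod}}\ } \mathfrak{K}\bigl(C^*_r(G/N_I)\bigr) \xrightarrow{\ \beta\ } \mathfrak{B}\bigl(L^2(G/N_I)\bigr),
\]
where $\lambda_{\mathrm{mod}}$ is the Hilbert module action of Lemma~\ref{lem-compact-action-on-clare-module} and $\beta$ is to be shown injective.  Once $\beta$ is known to be injective, the composition is the $L^2$ representation and both kernels in the statement coincide with $\ker \lambda_{\mathrm{mod}}$.

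For the factorization, I would apply Proposition~\ref{lem-clare-lemma} to the regular representation $\lambda_{L_I}$ of $L_I$ on $L^2(L_I)$.  This gives a unitary $G$-equivalence
\[
L^2(G/N_I) \;\cong\; C^*_r(G/N_I) \otimes_{C^*_r(L_I)} L^2(L_I),
\]
under which the action of $a \in C^*_r(G)$ becomes $\lambda_{\mathrm{mod}}(a)\otimes 1$.  The underlying point is that parabolic induction of the regular representation of the Levi factor (extended trivially across $N_I$) recovers the regular representation on $L^2(G/N_I)$; this is a standard induction-in-stages identification.  This delivers the factorization and the inclusion $\ker \lambda_{\mathrm{mod}} \subseteq \ker \lambda$.

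The main step is the injectivity of $\beta$.  Suppose $T \in \mathfrak{K}(C^*_r(G/N_I))$ satisfies $T \otimes 1 = 0$.  Then for every $x \in C^*_r(G/N_I)$ and every $v \in L^2(L_I)$,
\[
0 \;=\; \bigl\|(Tx)\otimes v\bigr\|^2 \;=\; \bigl\langle v,\; \lambda_{L_I}\bigl(\langle Tx, Tx\rangle_{C^*_r(L_I)}\bigr)\,v\bigr\rangle.
\]
Letting $v$ vary gives $\lambda_{L_I}(\langle Tx, Tx\rangle) = 0$, and since $\lambda_{L_I}$ is faithful on $C^*_r(L_I)$ by the very definition of the reduced $C^*$-algebra, $\langle Tx, Tx\rangle = 0$ and hence $Tx = 0$.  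As $x$ was arbitrary, $T = 0$.

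The only genuinely nontrivial input is the identification of $L^2(G/N_I)$ with $C^*_r(G/N_I) \otimes_{C^*_r(L_I)} L^2(L_I)$ as $G$-representations; once this is in place, the rest is the Morita-style observation that tensoring with a faithful representation of the coefficient algebra remains faithful at the level of compact operators on the Hilbert module.
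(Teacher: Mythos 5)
Your argument is correct, but it takes a more structural route than the paper's. You factor the quasi-regular representation as $C^*_r(G)\to\mathfrak{K}(C^*_r(G/N_I))\to\mathfrak{B}(L^2(G/N_I))$ using the localization $L^2(G/N_I)\cong C^*_r(G/N_I)\otimes_{C^*_r(L_I)}L^2(L_I)$, and then observe that localizing at a faithful representation of the coefficient algebra is faithful on adjointable operators; your verification of that last point (via $\|(Tx)\otimes v\|^2=\langle v,\lambda_{L_I}(\langle Tx,Tx\rangle)v\rangle$ and faithfulness of $\lambda_{L_I}$ on $C^*_r(L_I)$) is sound, and the factorization itself is legitimate by Lemma~\ref{lem-compact-action-on-clare-module}. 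The paper instead avoids invoking the full unitary identification of $L^2(G/N_I)$ with the localized module: it works only with the defining formula $\langle\varphi,\psi\rangle_{C^*_r(G/N_I)}(\ell)=\langle\varphi,\psi\cdot\ell^{-1}\rangle_{L^2(G/N_I)}$ for compactly supported functions, convolves both sides with a test function $\theta\in C_c(L_I)$ so that the resulting identity of continuous functions on $L_I$ extends by continuity to all $f\in C^*_r(G)$, and reads off the equality of kernels directly (evaluating at $\ell=e$ in one direction, and using that an element of $C^*_r(L_I)$ killed by convolution with every $\theta$ is zero in the other). The two proofs rest on the same underlying identity between the module-valued and scalar inner products; yours packages it as the standard induction-in-stages statement $\Ind_{P_I}^G(\lambda_{L_I})\cong\lambda_{G/N_I}$ (which is indeed established in Clare's paper and is the one external input you correctly flag), and in exchange you get a cleaner conceptual picture and a statement -- faithfulness of the localization on $\mathfrak{K}(C^*_r(G/N_I))$ -- that is reusable elsewhere. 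The paper's version is more elementary and self-contained, needing nothing beyond the explicit inner-product formula and a density argument.
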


\begin{proof} 
On \continuous, compactly supported functions $\varphi,\psi\colon G/N\to \C$, the  $C^*_r(L_I)$-valued inner product on  $C^*_r(G/N_I)$ is given  by the formula
\begin{equation*}
\langle \varphi, \psi\rangle _{C^*_r (G/N_I)}(\ell) = \langle \varphi, \psi\cdot \ell^{-1} \rangle _{L^2(G/N_I)} ,
\end{equation*}
where the dot indicates the right unitary action of $L_I$ on $L^2 (G/N_I)$ \cite[p.\,10]{ClaCriHig:parabolic_induction}.  If we write this as an identity of \continuous, compactly supported functions on $L_I$,
\begin{equation*}
\langle \varphi, \psi\rangle _{C^*_r (G/N_I)}  = \langle \varphi, \pmb{\psi}   \rangle _{L^2(G/N_I)} ,
\end{equation*}
where $\pmb \psi$ denotes the function $\ell\mapsto \psi\cdot \ell^{-1}$, and if $\theta$ is any \continuous, compactly supported function on $L_I$, then we obtain 
\begin{equation*}
\langle \varphi, \psi\rangle _{C^*_r (G/N_I)}* \theta   = \langle \varphi, \pmb{\psi}   \rangle _{L^2(G/N_I)} * \theta.
\end{equation*}
From this we obtain 
\begin{equation}
    \label{eq-def-clare-module-inner-product-2}
\langle \lambda_{C^*_r(G/N_I)}(f)\varphi, \psi\rangle _{C^*_r (G/N_I)}* \theta   = \langle\lambda_{L^2(G/N_I)}(f) \varphi, \pmb{\psi}   \rangle _{L^2(G/N_I)} * \theta,
\end{equation}
first for $f\in \Cc  (G)$, but then by continuity for all $f\in C^*_r(G)$, if we view \eqref{eq-def-clare-module-inner-product-2} as an identity of continuous functions on $G$ (not necessarily compactly supported).  The lemma follows from this.
\end{proof}

\begin{proof}[Proof of Theorem \ref{thm:weakly_in_quasireg_is_induced}]
The inclusion of class (i) in the theorem into class (ii) is clear from Clare's description of parabolic induction and Lemma~\ref{lem:l2_and_cstar_weakly_equivalent}.

Suppose that $\pi$ is weakly contained in $L^2(G/N_I)$.  By Lemma \ref{lem:l2_and_cstar_weakly_equivalent}, $\pi$ is trivial on the kernel of the representation 
\[
C_r^*(G) \longrightarrow  \mathfrak{K}(C_r^*(G/N_I)) ,
\]
and so it can be viewed as a representation of the image, which is a $C^*$-subalgebra of $\mathfrak{K}(C_r^*(G/N_I))$.  But now every irreducible representation of a subalgebra $A$ of a $C^*$-algebra $B$ can be embedded into the restriction to $A$ of an irreducible representation of $B$; see \cite[Prop.\,2.10.2]{DixmierCStarBook}. It therefore follows from Lemma~\ref{lem-irreps-of-comapct-ops} that $\pi$ may be embedded into some $\Ind_{P_I}^G \sigma$, as required.
\end{proof}

\subsection{The cuspidal ideal}
\label{sec-cuspidal}

Let us borrow a term from Harish-Chandra (see e.g.\ \cite[Sec.\,18]{HarishChandra75}) and make the following definition: 

\begin{definition}
\label{def-cuspidal-rep}
    The \emph{cuspidal ideal} $C^*_{\cusp}(G) \triangleleft  C^*_r (G)$ is the intersection of ideals  
    \[
     \operatorname{ker} \bigl ( \lambda_{G/N_I}\colon C^*_r (G) \to \mathfrak{B} ( L ^2 (G/ N_I))\bigr )
    \]
    as $I$ ranges over all subsets of $\Sigma$ other than $\Sigma$ itself.
\end{definition}

\begin{theorem} 
\label{thm-cuspidal-reps-nonzero-on-cuspidal-ideal}
A tempered irreducible unitary representation of $G$  vanishes on cuspidal ideal, and so determines an irreducible representation  of the quotient $C^*$-algebra $C^*_r(G) / C^*_{\cusp}(G)$, if and only if   it embeds in a unitary representation of the form
    $\Ind _{P_I}^G \sigma$,  where $I\subseteq \Sigma$ is a proper subset of the set of simple restricted roots, and  $\sigma$ is a tempered  irreducible unitary representation of $L_I$.
\end{theorem}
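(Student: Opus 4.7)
The plan is to translate the condition that $\pi$ annihilates the cuspidal ideal into a weak-containment statement, and then apply Theorem~\ref{thm:weakly_in_quasireg_is_induced} to deduce the desired embedding. The key observation is that the set $\Sigma$ of simple restricted roots is finite, so the intersection defining $C^*_{\cusp}(G)$ is a finite intersection of ideals.

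The ``if'' direction is immediate. Suppose $\pi \hookrightarrow \Ind_{P_I}^G \sigma$ for some $I \subsetneq \Sigma$ and some tempered irreducible $\sigma$ of $L_I$. By Theorem~\ref{thm:weakly_in_quasireg_is_induced}, $\pi$ is weakly contained in $L^2(G/N_I)$, which by the ideal-theoretic reformulation of weak containment (Remark~\ref{rem-weak-containment}) means $\ker(\lambda_{G/N_I}) \subseteq \ker(\pi)$. Since $C^*_{\cusp}(G) \subseteq \ker(\lambda_{G/N_I})$ by Definition~\ref{def-cuspidal-rep}, we get $C^*_{\cusp}(G) \subseteq \ker(\pi)$.

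For the ``only if'' direction, suppose $\pi$ vanishes on $C^*_{\cusp}(G)$, that is,
\[
\bigcap_{I \subsetneq \Sigma} \ker(\lambda_{G/N_I}) \,\subseteq\, \ker(\pi).
\]
Here lies the main step. Because $\pi$ is irreducible, $\ker(\pi)$ is a primitive ideal in $C^*_r(G)$, and primitive ideals in a $C^*$-algebra are prime: if a finite intersection of closed two-sided ideals is contained in a primitive ideal $P$, then one of the ideals is already contained in $P$ (see \cite[Prop.\,3.1.3]{DixmierCStarBook}). Since $\Sigma$ is finite, this applies directly, and we obtain some specific $I \subsetneq \Sigma$ such that $\ker(\lambda_{G/N_I}) \subseteq \ker(\pi)$. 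Reinterpreting via Remark~\ref{rem-weak-containment}, $\pi$ is weakly contained in $L^2(G/N_I)$. Theorem~\ref{thm:weakly_in_quasireg_is_induced} then produces a tempered irreducible representation $\sigma$ of $L_I$ and an embedding $\pi \hookrightarrow \Ind_{P_I}^G \sigma$, completing the proof.

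The only non-trivial ingredient is the primality of primitive ideals, and its use depends essentially on the finiteness of the index set $\{I : I \subsetneq \Sigma\}$; the analogous statement for infinite families would fail in general, but this is not an issue here.
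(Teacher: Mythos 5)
Your proof is correct and follows essentially the same route as the paper: the ``if'' direction is the ideal-theoretic reformulation of weak containment combined with Theorem~\ref{thm:weakly_in_quasireg_is_induced} (the paper phrases it contrapositively, you phrase it directly), and the ``only if'' direction rests on the fact that a primitive ideal containing a finite intersection of closed two-sided ideals must contain one of them, which is exactly the Dixmier lemma the paper invokes. The only cosmetic difference is the citation: the paper points to \cite[Lem.\,2.11.4]{DixmierCStarBook} for this fact rather than the primeness of primitive ideals per se.
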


\begin{proof} 
Let $\pi$ be a tempered irreducible unitary representation of $G$, viewed as an irreducible representation of $C^*_r(G)$.  Suppose first that  $\pi$ is \emph{nonzero} on the cuspidal ideal. Then of course $\pi$ is nonzero on each individual $\operatorname{ker}(\lambda_{G/N_I})$, and so by Theorem~\ref{thm:weakly_in_quasireg_is_induced}, it  can embed in no parabolically induced representation, as in the statement of the theorem.

Suppose next that $\pi$ is \emph{zero} on the cuspidal ideal. 
It is a general fact about $C^*$-algebras that if $\rho$ is any irreducible representation of a $C^*$-algebra $A$,  if $J_1,\dots J_k$ are closed, two-sided ideals in $A$, and if $\rho$ is zero on $J_1\cap\cdots \cap J_k$, then $\rho$ is zero on at least one of $J_1,\dots ,J_k$; see \cite[Lem.\,2.11.4]{DixmierCStarBook}.  So  $\pi$ vanishes on at least one of the ideals  $\operatorname{ker}  ( \lambda_{G/N_I})$ in Definition~\ref{def-cuspidal-rep}, and therefore by Theorem~\ref{thm:weakly_in_quasireg_is_induced} it embeds in some  $\Ind _{P_I}^G \sigma$, as required.
\end{proof}

\section{The Satake groupoid and its groupoid C*-algebra} 
\label{sec-satake-groupoid-and-c-star-algebra}
In this section we shall introduce and study the $C^*$-algebra of the Satake groupoid.  The groupoid was constructed in \cite{BraddHigsonYunckenOshimaPart1}, and we shall begin, in Section~\ref{sec-satake-space-and-groupoid}, with a review of some of the topics there.   After that and some further preliminaries, we shall construct a morphism from the reduced $C^*$-algebra of $G$ to the groupoid $C^*$-algebra that is central to our approach to the Harish-Chandra principle. 

\subsection{The Satake compactification and the Satake groupoid}
\label{sec-satake-space-and-groupoid}

Let $G=KAN$ be a real reductive group. Any two maximal compact subgroups of $G$ are conjugate, and it follows from the $KAK$-decomposition \cite[Thm.\,7.39]{KnappBeyond} that the normalizer of the distinguished maximal compact subgroup $K$ is $K{\cdot} A_{\Sigma}$. So the set of all maximal compact subgroups of $G$ may be identified with $G/K{\cdot} A_{\Sigma}$, which gives it a topology, and indeed a smooth $G$-manifold structure. 

The (maximal) \emph{Satake compactification} for $G$ is a $G$-compactification of the space of all maximal compact subgroups.  See \cite{Satake60} for the original construction, and \cite{BorelJiCompactificationsBook,GuivarchJiTaylor}, as well as \cite{BraddHigsonYunckenOshimaPart1}, for other accounts.  

The Satake compactification $\Satake$  may be constructed in various ways, but for our purposes it is best approached via the method of Oshima \cite{Oshima78}, who embedded $\Satake$ into a smooth closed $G$-manifold $\Oshima$.  This \emph{Oshima space} $\Oshima$ includes a finite family of smooth, closed hypersurfaces, each invariant under $G$, that intersect one another in normal crossings, and the space $G/K{\cdot} A_{\Sigma}$  is embedded in $\Oshima$ as a single component in the mutual complement of these hypersurfaces.  This gives $\Satake$ the structure of a smooth compact $G$-manifold with corners.  See \cite{BraddHigsonYunckenOshimaPart1} for a  review tailored to our purposes.

The Satake compactification has finitely many $G$-orbits.  They are in bijection with the subsets of the set $\Sigma$ of simple restricted roots (including $\Sigma$ itself), as follows: for each $I\subseteq \Sigma$ there is a unique point $x_I\in \Satake$ whose isotropy group is 
\begin{equation}
    \label{eq-isotropy-of-x-i}
    G_{x_I} = K_I A_I N_I ,
\end{equation}
where $A_I$ and $N_I$ are the factors in the standard parabolic subgroup $P_I{=}M_IA_IN_I$ in \eqref{eq-standard-parabolic-subgroup} and $K_I = K \cap M_I$.  The set $\{\, x_I : I \subseteq \Sigma\,\}$ is a complete set of representatives of the $G$-orbits.

\begin{definition}
\label{def-x-i-orbit}
    For $I\subseteq \Sigma$ we shall denote by $\Satake_I\subseteq \Satake$ the orbit of the point $x_I$ above.
\end{definition}

\begin{example}
The orbit $\Satake_{\Sigma}\subseteq \Satake$ is the interior   of the Satake compactification ($K_{\Sigma}=K$ and $  A_\Sigma = A_{\Sigma} $).
\end{example}

In \cite{BraddHigsonYunckenOshimaPart1} we constructed a Lie groupoid $\Groupoid_{\Oshima}$ whose object space is $\Oshima$, following a   method that was introduced by  Omar Mohsen in \cite{Mohsen:blowup} in the context of foliations.  The construction is carried out in two stages.  First a family of closed subgroups $H_m\subseteq G$, parametrized by the points $m\in\Oshima$, is constructed in such a way that 
\begin{enumerate}[\rm (i)]

\item $H_m$ is a subgroup of the isotropy group of $m\in \Oshima$,

\item if $m\in \Oshima$ and $g\in G$ then $H_{g{\cdot}m} = \Ad_g (H_m)$, and 

\item the total space of the family,
\[
\Subgroupoid_{\Oshima} = \{\, (h,m) : h\in H_m,\,\,\, m\in \Oshima\,\} ,
\]
is a closed subset and a smooth submanifold of the Cartesian product space $G {\times } \Oshima$, for which the projection to $\Oshima$ is a submersion.
\end{enumerate}
See \cite[Sec.\,3.5]{BraddHigsonYunckenOshimaPart1}.  Second, the Oshima groupoid is constructed as a quotient of the transformation groupoid 
\[
G \ltimes \Oshima = \{ \, (m_2,g,m_1) : m_1,m_2\in \Oshima,\,\,\, g\in G,\,\,\, gm_1 = m_2 \,\}
\]
 by $\Subgroupoid_{\Oshima}$, upon viewing $\Subgroupoid_{\Oshima}$  as the normal subgroupoid consisting of all those morphisms $(m,h,m)$ for which  $h\in H_m$.    This is a Lie groupoid \cite[Def.\,4.2.2]{BraddHigsonYunckenOshimaPart1}.

The Satake compactification $\Satake\subseteq \Oshima$, being a $G$-invariant subset of $\Oshima$, is a saturated subset for both the transformation and Oshima groupoids, and we define the \emph{Satake groupoid} $\Groupoid_{\Satake}$ to be the reduction of $\Groupoid_{\Oshima}$ to $\Satake$, or in other words the subgroupoid comprised of morphisms with sources and targets in $\Satake$. This is not a Lie groupoid, because the object space is not a smooth manifold (without boundary or corners).  But it is smooth in the source and target fiber directions (it is a \emph{continuous family groupoid} in the terminology of \cite{Paterson00}), and this suffices for our purposes.  Compare \cite[Remark\,2.6]{DebordSkandalisSurvey19}.

\subsection{Haar system on the Satake groupoid}
\label{sec-haar-system}
Because it is  the reduction of the  Lie groupoid $\Groupoid_{\Oshima}$ to a closed saturated subset, the Satake groupoid is guaranteed to possess a Haar system, which may be constructed using densities.  

As we mentioned, our main objective in Section~\ref{sec-satake-groupoid-and-c-star-algebra} is the construction of a morphism from the $C^*$-algebra of $G$ to the  $C^*$-algebra of the Satake groupoid $\Groupoid_{\Satake}$. In doing so we shall be  closely following ideas of  Omar Mohsen \cite{Mohsen:blowup}, but whereas Mohsen uses the density approach to Haar systems, here we shall use explicit measures,  constructed as follows.

Fix a smooth family of Haar measures   on the subgroups $H_m\subseteq G$ that comprise the family $\Subgroupoid_{\Oshima}$  over the Oshima space $\Oshima$ (these  are unimodular Lie groups, incidentally).  

\begin{lemma} 
\label{lem-introducing-the-delta-function}
There is a unique smooth function
\[
\delta \colon G \times \Oshima \longrightarrow (0,\infty)
\]
such that  if $g\in  G $ and $m\in  \Oshima$, and if $f\colon H_m\to \C$ is any \continuous, compactly supported function, then 
    \[
    \int _{H_m} f(h) \, d h = \delta(g,m) \int_{H_{g{\cdot}m}} f(g^{-1}hg) \, d h.
    \]
\end{lemma}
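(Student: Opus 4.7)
The plan is to construct $\delta(g,m)$ pointwise using the uniqueness of Haar measure, and then to verify smoothness via an explicit formula obtained by pairing with a fixed test function. For pointwise existence and uniqueness, property~(ii) of the family $\Subgroupoid_{\Oshima}$ recalled in Section~\ref{sec-satake-space-and-groupoid} says $H_{g\cdot m}=\Ad_g(H_m)$, so conjugation $\phi_g\colon h\mapsto ghg^{-1}$ is a Lie group isomorphism $H_m \xrightarrow{\cong} H_{g\cdot m}$. The pushforward $(\phi_g)_\ast \mu_{H_m}$ of the chosen Haar measure on $H_m$ is a left Haar measure on $H_{g\cdot m}$, so by uniqueness of Haar measure it equals a unique positive scalar multiple of the chosen Haar measure there. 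The substitution $h=g^{-1}h'g$ converts this comparison of Haar measures into the identity in the statement of the lemma, and $\delta(g,m)\in(0,\infty)$ is determined uniquely as the reciprocal of that scalar.

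For smoothness, I would fix once and for all a nonnegative $\phi\in C_c^\infty(G)$ with $\phi(e)>0$ and apply the defining identity with $f=\phi|_{H_m}$ to obtain
\[
\int_{H_m}\phi(h)\,dh \;=\; \delta(g,m)\int_{H_{g\cdot m}}\phi(g^{-1}hg)\,dh.
\]
Both sides are strictly positive, because $e\in H_m$ for every $m$ and $\phi(e)>0$. The left-hand side is a smooth function of $m$ because it is a fiberwise integral of a smooth, compactly supported function along the smooth submersion $\Subgroupoid_{\Oshima}\to\Oshima$ against the chosen smooth Haar system. The right-hand integral is smooth in $(g,m)$ by the same argument, treating $g$ as a smooth auxiliary parameter in the integrand $(g,h)\mapsto\phi(g^{-1}hg)$. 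Hence $\delta(g,m)$ is a quotient of two smooth positive functions, and so is smooth.

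The step I expect to require the most care is the smoothness of the right-hand fiberwise integral in $(g,m)$, since the domain of integration $H_{g\cdot m}$ varies nontrivially with both arguments. I would handle this in local coordinates: over a relatively compact open $U\subset\Oshima$ on which $\Subgroupoid_{\Oshima}\to\Oshima$ trivializes as a smooth bundle of Lie groups and the Haar system is expressed by a smoothly varying density, the integral becomes an ordinary integral over a fixed Euclidean domain with a smoothly varying integrand, and smoothness in $(g,m)$ follows by standard differentiation under the integral sign together with the uniform compact support provided by $\phi$.
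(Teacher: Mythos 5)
Your proposal is correct and follows the same route as the paper's one-sentence proof, which simply invokes the uniqueness theorem for Haar integrals after observing that the right-hand side of the identity is itself a Haar integral on $H_m$; your additional verification of smoothness, by writing $\delta(g,m)$ as a quotient of two positive fiberwise integrals of a fixed bump function against the smooth Haar system on the submersion $\Subgroupoid_{\Oshima}\to\Oshima$, supplies a detail the paper leaves implicit and is handled correctly. (One trivial bookkeeping point: with the standard pushforward convention $\int F\,d\bigl((\phi_g)_*\mu\bigr)=\int (F\circ\phi_g)\,d\mu$, the scalar $c$ determined by $(\phi_g)_*\mu_{H_m}=c\,\mu_{H_{g\cdot m}}$ is $\delta(g,m)$ itself rather than its reciprocal.)
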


\begin{proof}
    This is a consequence of usual uniqueness theorem for Haar integrals, since the right-hand side above may be viewed as a Haar integral on $H_m$.
\end{proof}

Now fix a Haar measure on $G$.  With this, and with the chosen Haar measures on the groups $H_m$, there are unique   right-$G$-invariant smooth measures   on the homogeneous spaces 
$ H_m \backslash G$ for which  
\begin{equation}
\label{eq-def-of-mu-m}
\int_G f(g) \, dg = \int _{H_m \backslash G}\left (
\int_{H_m} f(hg) \, d h  \right ) d\dot g
\end{equation}
for all \continuous, compactly supported $f\colon G \to \C$.  The dot in $\dot g$ is a reminder that we are integrating over cosets.  

Now recall that a Haar system on $\Groupoid_{\Satake}$ is a smooth family of smooth measures on the target fibers 
\[
\Groupoid_{\Satake}^x = \{ \, \gamma \in \Groupoid_{\Satake} : \operatorname{target}(\gamma) = x\,\} ,
\]
such that
\begin{equation}
    \label{eq-def-haar-system}
\int_{\Groupoid_{\Satake}^{x_1}} f(\alpha \circ \beta ) \, d\mu^{x_1} (\beta ) =
\int_{\Groupoid_{\Satake}^{x_2}} f(\gamma  ) \, d\mu^{x_2} (\gamma )
\end{equation}
for all $\alpha \in \Groupoid_{\Satake}$ with 
\[
\operatorname{source}(\alpha) = x_1\quad \text{and} \quad 
\operatorname{target}(\alpha) = x_2,
\]
and all $f\in \Cc  (\Groupoid_{\Satake})$.   See \cite[Sec.\,I.2]{RenaultGroupoidApproach80} or \cite[Sec.\,2.2]{DebordSkandalisSurvey19}.
A straightforward computation now shows that:

\begin{lemma} The family of integrals 
\[
\int_{\Groupoid_{\Satake}^x} f \, d\mu^x  
=  \int _{H_x\backslash G} f(x, g,g^{-1}x) \delta(g^{-1},x) \, d\dot g
\qquad (x\in \Satake,\,\,\, f\in \Cc  (\Groupoid_{\Satake}))
\]
is a Haar system on the Satake groupoid. \qed
\end{lemma}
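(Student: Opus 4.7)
The plan is to verify the three defining properties of a Haar system in turn: well-definedness of the integrand as a function on the coset space $H_x\backslash G$, the left-invariance condition \eqref{eq-def-haar-system}, and smooth dependence along the target fibers. The central tool is the cocycle identity
\[
\delta(g_2g_1, m) = \delta(g_1,m)\,\delta(g_2, g_1\cdot m)
\qquad (g_1,g_2\in G,\ m\in\Oshima),
\]
which follows by iterating the defining relation of Lemma~\ref{lem-introducing-the-delta-function}.

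For well-definedness, the key observation is that $\delta(h,m)=1$ whenever $h\in H_m$. This is immediate from Lemma~\ref{lem-introducing-the-delta-function}, because conjugation by $h$ preserves the Haar measure on $H_m$---the groups $H_m$ being unimodular Lie groups. The cocycle identity then gives
\[
\delta((hg)^{-1},x)=\delta(h^{-1},x)\,\delta(g^{-1},h^{-1}\cdot x)=\delta(g^{-1},x)
\]
for $h\in H_x$, since $h^{-1}$ fixes $x$. Combined with the fact that $(x,hg,g^{-1}x)$ and $(x,g,g^{-1}x)$ represent the same morphism in $\Groupoid_{\Satake}$, this shows that the integrand descends to $H_x\backslash G$.

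For left invariance, I would represent a morphism $\alpha$ from $x_1$ to $x_2$ by a triple $(x_2,g_0,x_1)$ with $g_0\cdot x_1 = x_2$, and write $\beta\in\Groupoid_{\Satake}^{x_1}$ as $(x_1,g,g^{-1}x_1)$, so that $\alpha\circ\beta=(x_2,g_0g,(g_0g)^{-1}x_2)$. After substituting $g'=g_0g$, the cocycle identity rewrites the modular factor as $\delta(g^{-1},x_1)=\delta(g_0,x_1)\,\delta((g')^{-1},x_2)$. A parallel short computation, using the defining Fubini formula \eqref{eq-def-of-mu-m} for both $H_{x_1}$ and $H_{x_2}$ together with left-invariance of Haar measure on $G$, shows that the left translation $g\mapsto g_0g$ transports the quotient measure on $H_{x_1}\backslash G$ to $\delta(g_0,x_1)^{-1}$ times the quotient measure on $H_{x_2}\backslash G$. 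The two factors of $\delta(g_0,x_1)$ cancel exactly, yielding the right-hand side of \eqref{eq-def-haar-system}.

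Smoothness of the family of measures along the target fibers follows from the smoothness of $\delta$, of the prescribed family of Haar measures on $\Subgroupoid_{\Oshima}$, and of the quotient measures on $H_x\backslash G$, all of which are built into the construction. The only genuinely nontrivial step is the left-invariance calculation; the main obstacle I expect is keeping the change of variables and the various modular factors aligned, but the cancellation is forced by the cocycle identity and so is robust.
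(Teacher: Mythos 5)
Your proof is correct and is precisely the ``straightforward computation'' that the paper leaves to the reader: the cocycle identity for $\delta$, the vanishing $\delta(h,x)=1$ for $h\in H_x$ (from unimodularity of $H_x$), and the transport of the quotient measures under $g\mapsto g_0g$ are exactly the ingredients needed, and the two factors of $\delta(g_0,x_1)$ do cancel as you say. No gaps.
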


\begin{remark}
    Here and below we are representing elements of the Satake grou\-poid as equivalence classes of triples $(x_2,g,x_1)$ with $gx_1=x_2$.  The
    the equivalence relation is
\[
( x_2, g , x_1   ) \sim (x_2,gh,x_1)\qquad \forall h\in H_{x_1}
\]
and the groupoid structures are 
      \[
\begin{aligned}
\operatorname{source} ( x_2, g , x_1   )  
    & = x_1
\\
\operatorname{target}   ( x_2,g,x_1   )
    & = x_2
\\
  (x_3, g_2 , x_2 ) \circ    ( x_2, g_1 , x_1 ) & =  ( x_3, g_2 g_1, x_1 ) .  
\end{aligned}
\]
\end{remark}

\subsection{Regular representations of the groupoid algebra}
\label{sec-regular-representations}
Using Haar system, we may define a convolution multiplication and $*$-operation on $\Cc  (\Groupoid_{\Satake})$ in the usual way, see   \cite[Sec.\,II.1]{RenaultGroupoidApproach80}  or \cite[Sec.\,2.2]{DebordSkandalisSurvey19}, and so equip $\Cc  (\Groupoid_{\Satake})$ with the structure of a $*$-algebra.  This may be completed to obtain the \emph{reduced $C^*$-algebra} $C^*_r(\Groupoid_{\Satake})$ using the regular representations of the groupoid, which we briefly review.

For $x\in \Satake$, denote by $\Groupoid_{\Satake,x}$ the source fiber 
\[
\Groupoid_{\Satake,x} = \{\, \gamma \in \Groupoid_{\Satake} : \operatorname{source}(\gamma) = x\,\}.
\]
The Hilbert space $L^2 (\Groupoid_{\Satake,x})$ is the completion of $\Cc  (\Groupoid_{\Satake,x})$ in the norm given by the formula 
\[
\| \xi \|^2 _{L^2 (\Groupoid_{\Satake,x})} = \int_{\Groupoid_{\Satake}^x} | \xi (\gamma^{-1})|^2 \, d\mu^x (\gamma),
\]
and $\Cc  (\Groupoid_{\Satake})$ is represented as bounded operators on this Hilbert space via the \emph{regular representation} 
\begin{equation}
    \label{eq-groupoid-regular-rep}
(\lambda_x(f)\xi )(\gamma) =  \int _{\Groupoid_{\Satake}^{\operatorname{target}(\gamma)}} f(\alpha)\xi (\alpha^{-1} \circ \gamma ) \, d\mu^{\operatorname{target}(\gamma)}(\alpha).
\end{equation}
See \cite[Sec.\,2.3]{DebordSkandalisSurvey19}.   The reduced $C^*$-algebra $C^*_r(\Groupoid_\Satake)$ is then the completion of $\Cc (\Groupoid_\Satake)$ in the norm
\begin{equation}
    \label{eq-reduced-groupoid-c-star-norm}
\| f\|_{C^*_r (\Groupoid_{\Satake})} = \sup_{x\in \Satake} \| \lambda _x (f)\| _{\mathfrak{B}(L^2 (\Groupoid_{\Satake,x}))}.
\end{equation}

The regular representations can be converted into forms more congenial for harmonic analysis as follows.
For $x\in \Satake$, form the Hilbert space $L^2 (G/H_x)$ by completing  $\Cc  (G/H_x)$ in the  norm given by the formula 
\begin{equation}
    \label{eq-norm-in-l-2-g-mod-h}
\| \varphi \|^2 _{L^2 (G/ H_x)} = \int_{H_x\backslash G} | \varphi (g^{-1})|^2 \, d \dot g,
\end{equation}
in which we use the integral on $H_x \backslash G$ from \eqref{eq-def-of-mu-m}.

\begin{definition} 
\label{def-tilde-xi}
For $x\in \Satake$ and  $\xi \in \Cc  (\Groupoid_{\Satake,x})$ define 
$\tilde \xi \in \Cc  (G/H_x)$ by 
\[
\tilde \xi (g) =  \delta (g,x)^{1/2}{\cdot}\xi (gx,g,x)  \qquad (g\in G).
\]
\end{definition}

\begin{lemma} 
    \label{lem-l-2-isometric-isomorphism}
The morphism
\[
\Cc  (\Groupoid_{\Satake,x}) \ni \xi \longmapsto \tilde \xi \in \Cc  (G/H_x)
\]
extends to a unitary isomorphism 
\[
\pushQED{\qed}
L^2 ( \Groupoid_{\Satake,x}) \stackrel \cong \longrightarrow L^2 (G/H_x).
\qedhere \popQED
\]
\end{lemma}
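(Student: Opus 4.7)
The plan is to verify directly that $\xi \mapsto \tilde\xi$ carries $\Cc(\Groupoid_{\Satake,x})$ isometrically onto $\Cc(G/H_x)$; the unitary extension to the $L^2$-completions then follows automatically. The first step is to confirm that $\tilde\xi$ really descends to the quotient $G/H_x$, i.e.\ $\tilde\xi(gh) = \tilde\xi(g)$ for $h \in H_x$. The groupoid equivalence immediately gives $\xi(gx, gh, x) = \xi(gx, g, x)$, so the question reduces to showing that $\delta(gh, x) = \delta(g, x)$. For this, I would apply Lemma~\ref{lem-introducing-the-delta-function} with $g$ replaced by $h \in H_x$ (noting $hx = x$), which expresses $\delta(h,x)$ as the Jacobian of an inner automorphism of the unimodular group $H_x$; hence $\delta(h,x) = 1$, and a one-line composition argument inside Lemma~\ref{lem-introducing-the-delta-function} upgrades this to the required $\delta(gh,x) = \delta(g,x)$.

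For the isometry itself, I would parametrise the target fibre $\Groupoid_\Satake^x$ by $H_x\backslash G$ via $\dot g \leftrightarrow (x, g, g^{-1}x)$, whose groupoid inverse is $(g^{-1}x, g^{-1}, x)$. Inserting the explicit Haar-system formula of Section~\ref{sec-haar-system} into the definition of the norm yields
\[
\|\xi\|^2_{L^2(\Groupoid_{\Satake,x})} = \int_{H_x\backslash G} |\xi(g^{-1}x, g^{-1}, x)|^2 \, \delta(g^{-1},x) \, d\dot g,
\]
while the defining identity $|\tilde\xi(g^{-1})|^2 = \delta(g^{-1},x) \cdot |\xi(g^{-1}x, g^{-1}, x)|^2$ from Definition~\ref{def-tilde-xi} rewrites the integrand of $\|\tilde\xi\|^2_{L^2(G/H_x)}$ in exactly the same form. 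Surjectivity onto the dense subspace is then free: given $\varphi \in \Cc(G/H_x)$, the assignment $\xi(gx, g, x) := \delta(g,x)^{-1/2} \varphi(g)$ defines, by the same invariance argument, a continuous compactly supported function on $\Groupoid_{\Satake,x}$ with $\tilde\xi = \varphi$.

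The only genuine delicacy I anticipate is the clean verification of the right-$H_x$-invariance of $\delta(\cdot, x)^{1/2}$; everything else reduces to matching the Haar-system measure on $\Groupoid_\Satake^x$ with the right-invariant measure on $H_x\backslash G$, which is the very reason the factor $\delta^{1/2}$ appears in Definition~\ref{def-tilde-xi} to begin with.
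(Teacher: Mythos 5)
Your proof is correct and is precisely the routine verification the paper leaves to the reader (the lemma is stated with its proof omitted): the right-$H_x$-invariance of $\tilde\xi$ via the cocycle identity $\delta(gh,x)=\delta(g,hx)\delta(h,x)=\delta(g,x)$ (using unimodularity of $H_x$), the matching of the two $L^2$-norms through the parametrization $\dot g\leftrightarrow(x,g,g^{-1}x)$ of the target fibre, and surjectivity onto $\Cc(G/H_x)$ by inverting the formula of Definition~\ref{def-tilde-xi}. No gaps.
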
 

\subsection{The integration morphism}
\label{sec-integration-morphism}
Let $x\in \Satake$ . The group convolution $*$-algebra  $\Cc(G)$ is represented as a $*$-algebra of bounded operators on   $L^2 (G/H_x)$  by the usual  formula 
\begin{equation}
    \label{eq-group-regular-rep}
(\lambda_{G/H_x}(f)\varphi )(g) = \int_G f(g_1) \varphi(g_1^{-1} g) \, dg_1
\end{equation}
for $f\in \Cc  (G)$ and $\varphi\in \Cc  (G/H_x)$. This representation is linked to the regular representations of $\Cc(\Groupoid_\Satake)$ in the previous section by the follow$C^*_r(\Groupoid_\Satake)$ing construction:

\begin{definition}
    \label{def-integration-morphism}
The \emph{integration morphism} is the  linear map
\begin{equation*}
    \label{eq-integration-morphism-on-test-functions}
\Cc  (G) \ni f \longmapsto f_{!} \in \Cc  (\Groupoid_{\Satake})
\end{equation*}
that is defined by means of the formula 
\begin{equation*}
f_{!}(gx,g,x) 
     = \delta  (g,x)^{-1/2} \int _{H_x} f(gh)\, dh 
\end{equation*}
for all $(gx,g,x)\in \Groupoid_\Satake$.
\end{definition}

\begin{lemma} 
\label{lem-compatibility-of-regular-reps-via-integration}
For every $x \in \Satake$ there  is a commuting diagram 
\[
\xymatrix{
\Cc  (G)\ar[d]_{\lambda_{G/H_x}} \ar[r]& \Cc  (\Groupoid_{\Satake})\ar[d]^{\lambda _x}
\\
\mathfrak{B}(L^2 (G/H_x)) \ar[r]_\cong & \mathfrak{B}(L^2 (\Groupoid_{\Satake,x}))
}
\]
in which the top arrow is the integration morphism, and the bottom arrow is induced from the inverse of the unitary isomorphism in Lemma~\ref{lem-l-2-isometric-isomorphism}. \qed
\end{lemma}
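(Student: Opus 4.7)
The plan is to verify the commutativity by direct calculation on a dense subspace. Since all representations are by bounded operators and $\Cc(\Groupoid_{\Satake,x})$ is dense in $L^2(\Groupoid_{\Satake,x})$, it suffices to establish, for each $f\in\Cc(G)$, $\xi\in\Cc(\Groupoid_{\Satake,x})$ and $g\in G$, the pointwise identity
\[
(\lambda_{G/H_x}(f)\,\tilde\xi)(g) \;=\; \widetilde{\lambda_x(f_!)\xi}(g).
\]
The left side equals $\int_G f(g_1)\,\tilde\xi(g_1^{-1}g)\,dg_1$ by \eqref{eq-group-regular-rep}. The right side, unwinding Definition~\ref{def-tilde-xi}, the groupoid formula~\eqref{eq-groupoid-regular-rep}, the Haar system of Section~\ref{sec-haar-system} and then Definition~\ref{def-integration-morphism} for $f_!$, becomes a double integral of $f(g_2 h)\,\tilde\xi(g_2^{-1}g)$ over $(h,\dot g_2)\in H_{g_2^{-1}gx}\times(H_{gx}\backslash G)$ weighted by four $\delta^{\pm1/2}$-factors at the basepoints $x$, $gx$ and $g_2^{-1}gx$.

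The key tool is the cocycle identity $\delta(g_1g_2,m)=\delta(g_1,g_2m)\,\delta(g_2,m)$, which follows immediately from the uniqueness of Haar measure underlying Lemma~\ref{lem-introducing-the-delta-function}. Specializations give $\delta(g_2^{-1},gx)\,\delta(g_2,g_2^{-1}gx)=1$ and $\delta(g,x)=\delta(g_2,g_2^{-1}gx)\,\delta(g_2^{-1}g,x)$, and together these reduce the four $\delta^{\pm 1/2}$-factors to exactly the single factor needed by Lemma~\ref{lem-introducing-the-delta-function} to convert the inner integral over $H_{g_2^{-1}gx}$ into one over $H_{gx}$ by the change of variables $h'=g_2 h g_2^{-1}$. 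The resulting expression simplifies to
\[
\int_{H_{gx}\backslash G}\!\int_{H_{gx}}\! f(h'g_2)\,\tilde\xi(g_2^{-1}g)\,dh'\,d\dot g_2 .
\]

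To finish I would observe that $\tilde\xi(g_2^{-1}g)$ is invariant under $g_2\mapsto h'g_2$ with $h'\in H_{gx}$: this is because $H_{gx}=\Ad_g(H_x)$ by property~(ii) of the family $\Subgroupoid_{\Oshima}$ recalled in Section~\ref{sec-satake-space-and-groupoid}, combined with the right $H_x$-invariance of $\tilde\xi$. Hence $\tilde\xi(g_2^{-1}g)=\tilde\xi((h'g_2)^{-1}g)$, and formula~\eqref{eq-def-of-mu-m} applied at $m=gx$ collapses the double integral into $\int_G f(g_1)\,\tilde\xi(g_1^{-1}g)\,dg_1$ under the substitution $g_1=h'g_2$, which is exactly the left-hand side. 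The only genuine obstacle is bookkeeping: the four $\delta^{\pm 1/2}$-factors live at three different basepoints and must be combined in the right order via the cocycle identity, and the left-$H_{gx}$-invariance of $\tilde\xi(g_2^{-1}g)$ must be recognized before \eqref{eq-def-of-mu-m} can be invoked.
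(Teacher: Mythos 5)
Your computation is correct and is precisely the routine verification that the paper omits (the lemma is stated with a \qed and no proof): the cocycle identity $\delta(g_1g_2,m)=\delta(g_1,g_2m)\,\delta(g_2,m)$ does follow from Lemma~\ref{lem-introducing-the-delta-function}, your bookkeeping of the $\delta$-factors reduces them to the single $\delta(g_2,g_2^{-1}gx)^{-1}$ needed to move the inner integral from $H_{g_2^{-1}gx}$ to $H_{gx}$, and the left-$H_{gx}$-invariance of $g_2\mapsto\tilde\xi(g_2^{-1}g)$ (via $H_{gx}=\Ad_g(H_x)$ and right-$H_x$-invariance of $\tilde\xi$) lets \eqref{eq-def-of-mu-m} collapse the double integral to $\int_G f(g_1)\tilde\xi(g_1^{-1}g)\,dg_1$. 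This fills in exactly the argument the paper intends.
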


\begin{corollary} 
The integration morphism  in Definition~\textup{\ref{def-integration-morphism}} is a morphism of $*$-algebras from 
$\Cc   (G)$ to $ \Cc (\Groupoid_{\Satake})$.
\end{corollary}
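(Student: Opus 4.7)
The plan is to leverage Lemma~\ref{lem-compatibility-of-regular-reps-via-integration} together with the fact that both horizontal arrows in its commuting diagram are already $*$-homomorphisms for standard reasons. Specifically, $\lambda_{G/H_x}\colon \Cc(G)\to \mathfrak{B}(L^2(G/H_x))$ is a $*$-representation because it is the integrated form of the unitary left-regular representation of $G$ on $L^2(G/H_x)$, and $\lambda_x\colon \Cc(\Groupoid_\Satake)\to \mathfrak{B}(L^2(\Groupoid_{\Satake,x}))$ is a $*$-representation by the standard construction of the regular representation of a groupoid convolution algebra equipped with a Haar system.

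Writing $U_x$ for the unitary isomorphism of Lemma~\ref{lem-l-2-isometric-isomorphism}, the conclusion of Lemma~\ref{lem-compatibility-of-regular-reps-via-integration} rephrases as the identity $\lambda_x \circ (\cdot)_! = \Ad(U_x)\circ \lambda_{G/H_x}$ of maps $\Cc(G)\to \mathfrak{B}(L^2(\Groupoid_{\Satake,x}))$. Since $\Ad(U_x)$ is a $*$-isomorphism, the right-hand side is a $*$-homomorphism, and therefore so is the left-hand side. Consequently, for all $f_1,f_2,f\in \Cc(G)$ and all $x\in\Satake$, one has $\lambda_x\bigl((f_1*f_2)_! - f_{1,!}*f_{2,!}\bigr)=0$ and $\lambda_x\bigl((f^*)_! - (f_!)^*\bigr)=0$.

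To upgrade these operator identities to equalities of functions in $\Cc(\Groupoid_\Satake)$, I would invoke the standard fact that the family $\{\lambda_x : x\in\Satake\}$ is jointly separating on $\Cc(\Groupoid_\Satake)$. Concretely, for $\phi\in \Cc(\Groupoid_\Satake)$ and $\gamma_0\in \Groupoid_\Satake$ with $\operatorname{source}(\gamma_0)=x$, one recovers $\phi(\gamma_0)$ from $\lambda_x(\phi)$ by testing against $\xi\in \Cc(\Groupoid_{\Satake,x})$ approximating a delta mass at the identity morphism $\operatorname{id}_x$: the defining formula \eqref{eq-groupoid-regular-rep} then concentrates the integral near $\alpha=\gamma_0$ and yields $(\lambda_x(\phi)\xi)(\gamma_0)\to \phi(\gamma_0)\cdot\int\xi$. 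Thus $\lambda_x(\phi)=0$ for every $x\in\Satake$ forces $\phi\equiv 0$ on every source fiber, and hence $\phi=0$ throughout $\Groupoid_\Satake$.

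The main (and really only) obstacle is this last separation step; once it is in hand, the corollary is automatic from Lemma~\ref{lem-compatibility-of-regular-reps-via-integration}, since the $*$-algebra structure transports through the unitary conjugation $\Ad(U_x)$ without any computational work involving $\delta$ or the Haar integrals.
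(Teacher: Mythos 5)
Your proposal is correct and is essentially the paper's own argument: the paper likewise deduces the corollary from Lemma~\ref{lem-compatibility-of-regular-reps-via-integration}, the fact that both regular representations are $*$-morphisms, and the collective faithfulness of the family $\{\lambda_x\}_{x\in\Satake}$ on $\Cc(\Groupoid_\Satake)$. The only difference is that you sketch the approximate-delta argument for that faithfulness, which the paper simply asserts parenthetically.
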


\begin{proof} 
This follows from Lemma~\ref{lem-compatibility-of-regular-reps-via-integration} since the  representations of both $\Cc  (G)$ and $\Cc  (\Groupoid_\Satake)$ are $*$-algebra morphisms, and since the latter are collectively faithful (the intersection of the kernels of all $\lambda_x$  is zero).
\end{proof}

\subsection{Integration morphism at the C*-algebra level}
\label{sec-integration-morphism-at-c-star-level}
We want to promote the integration morphism  to a morphism of reduced $C^*$-algebras. To do so, we need to recall from \cite{BraddHigsonYunckenOshimaPart1} that for any $I\subseteq \Sigma$, the group $H_{x_I}$, which we shall write simply as $H_I$ is 
\begin{equation}
\label{eq-def-of-h-i}
H_{I} = K_I N_I \subseteq G,
\end{equation}
and that every member of the family $\Subgroupoid_{\Satake}$ is conjugate in $G$ to some (unique) $H_I$.  

\begin{theorem} 
\label{thm-integration-morphism-at-c-star-level}
The integration morphism  in Definition~\textup{\ref{def-integration-morphism}} extends to a morphism of $C^*$-algebras
\[
C^*_r (G) \longrightarrow C^*_r(\Groupoid_{\Satake})
\]
For every $x\in \Satake$, the composition of the integration $C^*$-algebra morphism with the regular representation of $C^*_r(\Groupoid_{\Satake})$ on $L^2(\Groupoid_{\Satake,x})$ is unitarily equivalent to the regular representation of $C^*_r (G)$ on $L^2 (G/ H_x)$.
\end{theorem}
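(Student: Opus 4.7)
The plan is to combine Lemma~\ref{lem-compatibility-of-regular-reps-via-integration} with a weak-containment argument based on the amenability of the isotropy subgroups $H_x$. For each $x \in \Satake$ and each $f \in \Cc(G)$, Lemma~\ref{lem-compatibility-of-regular-reps-via-integration} together with the unitary isomorphism of Lemma~\ref{lem-l-2-isometric-isomorphism} identifies $\lambda_x(f_!)$ on $L^2(\Groupoid_{\Satake,x})$ with $\lambda_{G/H_x}(f)$ on $L^2(G/H_x)$, so that
\[
\|\lambda_x(f_!)\|_{\mathfrak{B}(L^2(\Groupoid_{\Satake,x}))} = \|\lambda_{G/H_x}(f)\|_{\mathfrak{B}(L^2(G/H_x))} .
\]
In view of the formula \eqref{eq-reduced-groupoid-c-star-norm}, to show that the integration morphism extends to a $C^*$-algebra morphism $C^*_r(G) \to C^*_r(\Groupoid_{\Satake})$ it therefore suffices to prove the uniform estimate $\sup_{x \in \Satake} \|\lambda_{G/H_x}(f)\|_{\mathfrak{B}(L^2(G/H_x))} \leq \|f\|_{C^*_r(G)}$.

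Next I would exploit the description \eqref{eq-def-of-h-i} and the fact, recalled just before the theorem, that every $H_x$ is $G$-conjugate to some $H_I = K_I N_I$ with $I \subseteq \Sigma$. Left translation by $g \in G$ induces a unitary equivalence of the quasi-regular representations $\lambda_{G/H_x} \cong \lambda_{G/gH_xg^{-1}}$, so the uniform estimate reduces to checking, for each $I \subseteq \Sigma$, that $\lambda_{G/H_I}$ extends continuously to $C^*_r(G)$. The subgroup $H_I = K_I \ltimes N_I$ is amenable, being a semidirect product of a compact group with a nilpotent one. I would then invoke the standard theorem that for a closed amenable subgroup $H$ of a locally compact group $G$, the quasi-regular representation of $G$ on $L^2(G/H)$ is weakly contained in the left regular representation of $G$; this gives the required norm bound. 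Combining the two steps produces the integration morphism at the $C^*$-level, and the unitary equivalence in the second assertion of the theorem is then inherited from the commuting diagram of Lemma~\ref{lem-compatibility-of-regular-reps-via-integration} by continuous extension, since $\Cc(G)$ is dense in $C^*_r(G)$.

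The main obstacle I anticipate is a careful book-keeping of the square-root factors $\delta(g,x)^{\pm 1/2}$ that appear in the definitions of $\tilde\xi$ (Definition~\ref{def-tilde-xi}) and of the integration morphism (Definition~\ref{def-integration-morphism}), together with the right-invariant measure on $H_x \backslash G$ used in \eqref{eq-norm-in-l-2-g-mod-h}, so as to identify the representation $\lambda_{G/H_x}$ defined by \eqref{eq-group-regular-rep} with the unitarily induced representation $\Ind_{H_x}^G(1)$ to which the weak-containment theorem for amenable subgroups applies. This identification is made easier by the fact that $H_I = K_I N_I$ is unimodular, because $K_I$ is compact and hence acts on $\mathfrak{n}_I$ by automorphisms of determinant of absolute value one; once it is in hand, the rest of the argument is routine.
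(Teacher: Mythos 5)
Your proposal is correct and follows essentially the same route as the paper: both arguments reduce the norm estimate to the conjugacy classes $H_I = K_IN_I$, invoke amenability of these subgroups and Fell's weak-containment theorem to get $\|\lambda_{G/H_x}(f)\| \le \|f\|_{C^*_r(G)}$, and then conclude via the compatibility diagram of Lemma~\ref{lem-compatibility-of-regular-reps-via-integration} and the definition \eqref{eq-reduced-groupoid-c-star-norm} of the reduced groupoid norm. The measure-theoretic bookkeeping you flag is exactly what Lemmas~\ref{lem-l-2-isometric-isomorphism} and \ref{lem-compatibility-of-regular-reps-via-integration} are set up to absorb, so nothing further is needed.
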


\begin{proof} 
Each $H_I$ is an amenable group, and therefore the quasi-regular representation of $G$ on any $L^2 (G/H_I)$ is weakly contained in the regular representation of $G$ \cite[Thm.\,4.1]{Fell61}.  As a result, 
\[
\| f\| _{C^*_r(G)} \ge \| 
\lambda_{G/H_I} (f)\|_{\mathfrak{B}(L^2 (G/H_I))} 
\qquad \forall I\subseteq \Sigma ,
\]
and the same holds for all $\lambda_{G/H_x}$, each of which is unitarily equivalent to one of the representations $\lambda_{G/H_I}$.

The extendibility of the integration morphism is now a consequence of the compatibility of regular representations described in Lemma~\ref{lem-compatibility-of-regular-reps-via-integration}, and the fact that the regular representations $\lambda _x$ define the norm of $C^*_r(\Groupoid_{\Satake})$ as in \eqref{eq-reduced-groupoid-c-star-norm}. The unitary equivalence between representations in the statement of the theorem is provided by Lemma~\ref{lem-compatibility-of-regular-reps-via-integration}.
\end{proof}

\section{Vector bundles on the Satake compactification}
\label{sec-vector-bundles-on-satake}

In this section we shall present a modest elaboration of the groupoid $C^*$-algebra that will be important for our harmonic-analytic applications.

\subsection{Vector bundles}
\label{sec:V}

If $ V$ is a finite-dimensional representation of $K$, then of course associated to $V$ there is an $G$-equivariant Hermitian vector bundle over $G/K{\cdot} A_{\Sigma}$ namely  $G/A_{\Sigma} \times _K V$.  This bundle may be extended to a $G$-equivariant Hermitian vector bundle over the Satake compactification: 

\begin{theorem}
\label{thm:V}
    Let $\pi\colon K\to V$ be a    finite-dimensional unitary representation   of $K$.
    There is a $G$-equivariant Hermitian vector bundle $V_{\Satake}$ on the Satake compactification $\Satake$ with the following property: If $\Satake_I\subseteq \Satake$ is the $G$-orbit in $\Satake$ associated to $I \subseteq \Sigma$, and if $V_{\Satake_I}$ is the restriction of the vector bundle $V_\Satake$ to $\Satake_I$, then there is an isomorphism of $G$-equivariant vector bundles 
    \[
    \xymatrix{
   G/A_IN_I\times_{K_I} V \ar[r]^-{\cong} \ar[d] &   V_{\Satake_I}  \ar[d]
    \\
    G / K_IA_IN_I\ar[r]_-{\cong}  & \Satake_I 
    }
    \]
    in which the bottom map is the orbit map for $x_I\in \Satake_I$.
\end{theorem}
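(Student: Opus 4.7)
The plan is to construct $V_\Satake$ fiber-wise using canonical representations of the isotropy groups $G_{x_I}$, and then to topologize the result as a locally trivial bundle by exploiting the smooth structure of $\Satake$ inside the Oshima space $\Oshima$ from \cite{BraddHigsonYunckenOshimaPart1}. The first ingredient is, for each $I \subseteq \Sigma$, a representation
\[
\sigma_I \colon G_{x_I} = K_I A_I N_I \longrightarrow U(V), \qquad \sigma_I(kan) := \pi(k).
\]
That $\sigma_I$ is a well-defined homomorphism uses the semidirect-product decomposition $G_{x_I} = (K_I \times A_I) \ltimes N_I$ (in which $A_I$ centralizes $K_I$ and $N_I$ is normal), so that the projection $kan \mapsto k$ is itself a continuous group homomorphism with kernel $A_I N_I$; composing with $\pi|_{K_I}$ gives $\sigma_I$. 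The associated $G$-equivariant Hermitian bundle $G \times_{G_{x_I}} V$ on the orbit $\Satake_I \cong G/G_{x_I}$ then coincides with $G/A_I N_I \times_{K_I} V$, since $A_I N_I$ acts trivially under $\sigma_I$, and this is precisely the bundle required to appear as the restriction $V_{\Satake_I}$ in the statement.

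For the global construction I would set $V_\Satake := \bigsqcup_{I \subseteq \Sigma} (G/A_I N_I \times_{K_I} V)$ as a set over $\Satake = \bigsqcup_I \Satake_I$ and topologize it using the local atlas of $\Oshima$. In a chart around a boundary point, the stratification of $\Oshima$ by $G$-orbits is given by the vanishing of polyradial coordinates indexed by simple restricted roots, and the principal $K$-bundle $G/A_\Sigma \to \Satake_\Sigma$ extends continuously across these strata to a principal $K$-bundle $\widetilde{\Satake} \to \Satake$ whose restriction to $\Satake_I$ is $G \times_{K_I A_I N_I} K \cong G/A_I N_I \times_{K_I} K$. The associated vector bundle $\widetilde{\Satake} \times_K V$ is then the desired $V_\Satake$, with the $G$-equivariant structure and Hermitian metric inherited from $\widetilde{\Satake}$ and from the unitary structure on $V$.

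The main obstacle is the continuity check in the gluing: that the bundles $G/A_I N_I \times_{K_I} V$ for the different subsets $I$ assemble into a locally trivial vector bundle over $\Satake$. The cleanest approach is to work in a single Oshima chart where the family of isotropy groups $G_{x_{I(m)}}$ varies with the base point $m$ in a manner controlled by the smooth family of subgroups $\Subgroupoid_\Oshima \subseteq G \times \Oshima$ introduced in Section~\ref{sec-satake-groupoid-and-c-star-algebra}. Local trivializations of $V_\Satake$ can then be read off from local smooth sections of the Iwasawa decomposition $G = KAN$, whose $K$-component is a globally smooth map from $G$ to $K$ and therefore produces compatible transition data across the strata; the verification that $\sigma_I$ transforms correctly along convergent sequences of base points is the crux of the matter, and reduces to the observation that each $\sigma_I$ is obtained by composing $\pi$ with the restriction of this same $K$-projection to $G_{x_I}$.
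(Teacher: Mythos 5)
Your fiber-wise picture is correct and agrees with the paper's: on each orbit $\Satake_I\cong G/K_IA_IN_I$ the bundle must be the one associated to the representation $\sigma_I(kan)=\pi(k)$ of the isotropy group, and this does reproduce $G/A_IN_I\times_{K_I}V$. But the actual content of the theorem is the assertion that these orbit-wise bundles assemble into a locally trivial $G$-equivariant bundle over all of $\Satake$, and at that point your argument has a genuine gap: the existence of the extended principal $K$-bundle $\widetilde\Satake\to\Satake$ is simply asserted, and the proposed mechanism for checking continuity across strata does not work as stated. The isotropy representation of the bundle at a general point $g\cdot x_\Sigma$ of the interior orbit is $h\mapsto \pi(\kappa(g^{-1}hg))$, a \emph{conjugate} of $\pi\circ\kappa$, not the restriction of $\pi\circ\kappa$ to the isotropy group; already for $G=SL(2,\R)$ and $g=a\in A$ one has $\pi(\kappa(a^{-1}ha))\neq\pi(\kappa(h))$ for $h\in aKa^{-1}$. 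What must be proved is that these conjugated representations converge as $g$ degenerates toward the boundary, i.e.\ as the isotropy groups $aKa^{-1}$ degenerate to $K_I\ltimes\theta(N_I)$. Smoothness of the Iwasawa projection $\kappa\colon G\to K$ says nothing about this limit, so your closing ``reduces to the observation that each $\sigma_I$ is $\pi\circ\kappa|_{G_{x_I}}$'' does not close the argument.

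The paper handles exactly this point by constructing a continuous family of unitary representations $\pi_t\colon H_t\to U(V)$, $t\in\R^\Sigma_{\spacenonnegative}$, with $\pi_t(h)=\pi_{a\cdot t}(aha^{-1})$ and with $\pi_{t_I}$ equal to $\pi$ composed with the quotient $H_I\to K_I$; continuity as $t$ reaches the boundary is verified by the explicit Lie-algebra formula $d\pi_t\bigl(t^{2\gamma}X_\gamma+\theta(X_\gamma)\bigr)=t^{\gamma}\,d\pi\bigl(X_\gamma+\theta(X_\gamma)\bigr)$, which manifestly extends continuously to $t^\gamma=0$. The bundle is then built as the double quotient $\bigl((\bigG_{\nonnegative}\times V)/\bigH_{\nonnegative}\bigr)/A$, mirroring Oshima's construction of $\Satake$ itself, with local triviality checked in the chart $N\times\R^\Sigma_{\spacenonnegative}\to\Satake$. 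To repair your argument you would need to supply an analogue of this convergence statement — either for the family of homomorphisms $H_t\to K$ themselves (to get your universal principal bundle) or representation by representation as the paper does; note that the paper's closing remark shows the integrated family can fail to exist beyond $\R^\Sigma_{\spacenonnegative}$ for non-linear groups, so some care with this step is genuinely required.
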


Since the construction of  $V_{\Satake}$ involves an examination of how the Satake compactification  itself is constructed,  we shall defer it to Appendix~\ref{app-vector-bundle}.

\subsection{Fell bundles on the Satake groupoid}

Let $V$ be a finite-dimensional unitary representation $V$ of $K$, and let $V_{\Satake}$ be a  $G$-equi\-variant Hermitian vector bundle over $\Satake$ of the kind described in  Theorem \ref{thm:V} (we shall require one such vector bundle $V_{\Satake}$ for each $V$; the only properties required of $V_{\Satake}$ are those described in the theorem). 

\begin{definition}
 We shall denote by  $\End(V)$ the pullback of the bundle  $V_{\Satake}^{\phantom{*}}\boxtimes V_{\Satake}^*$ over $\Satake\times \Satake$ along the map 
\[ (\operatorname{target},\operatorname{source}): \Groupoid_\Satake \longrightarrow  \Satake\times \Satake .
\]
We identify the fiber over $\alpha\in \Groupoid_{\Satake}$ with $\mathfrak{B}(V_{\operatorname{source}(\alpha)}, V_{\operatorname{target}(\alpha)})$ 
and equip $\End(V_{\Satake})$ with the Fell bundle structure \cite[Sec.\,2]{Kumjian98}, in which the multiplication operation between fibers over composable morphisms is composition of linear operators on the fibers, and the involution is the usual adjoint. 
\end{definition}

Form the convolution $*$-algebra $\Cc  (G;\End(V))$ of \continuous, compactly supported sections of $\End(V)$ using the above Fell bundle structure. There are source fiber representations 
\begin{equation}
\label{eq-source-fiber-reps-with-v}
\Cc  (G;\End(V)) 
\longrightarrow 
\mathfrak{B}\bigl (L^2(\Groupoid_{\Satake,x};V_{\Groupoid_{\Satake,x}})\bigr )
\end{equation}
on the Hilbert spaces of $L^2$-sections of the restriction of $V_{\Satake}$ to the source fibers of $\Groupoid_{\Satake}$, and using these we may form the $C^*$-algebra completion $C^*_r (\Groupoid_{\Satake};\End(V))$, as in \eqref{eq-reduced-groupoid-c-star-norm}.

The $C^*$-algebra $C^*_r (\Groupoid_{\Satake};\End(V))$ is Morita equivalent to $C^*_r(\Groupoid_{\Satake})$, but it is nevertheless important to us in its own right because of the existence of an integration morphism 
\begin{equation}
    \label{eq-new-integration-morphism}
C^*_r(G) \longrightarrow C^*_r(\Groupoid_{\Satake};\End(V))
\end{equation}
that differs from the original in Theorem~\ref{thm-integration-morphism-at-c-star-level} from the point of view of harmonic analysis.  The morphism \eqref{eq-new-integration-morphism} is defined first at the level of \continuous, compactly supported functions by a variation on the formula that we used Definition~\ref{def-integration-morphism}:
\begin{equation}
    \label{eq-integration-morphism-with-v}
f_! (gx,g,x) = \delta  (g,x)^{-1/2} \int _{H_x} f(gh){\cdot} u_{g,x}\, dh ,
\end{equation}
where $u_{g,x}\colon V_x\to V_{gx}$ is the unitary isomorphism between fibers of $V_{\Satake}$ given by the $G$-action on the Hermitian bundle $V_{\Satake}$.  We may then analyze this morphism almost exactly as we handled the scalar case in Sections~\ref{sec-integration-morphism} and \ref{sec-integration-morphism-at-c-star-level}.

To begin, the isomorphism of Hermitian $G$-equivariant vector bundles given  in Theorem~\ref{thm:V} gives a $G$-equivariant isomorphism of spaces of \continuous, compactly supported sections, 
\[
\Cc  (\Groupoid_{\Satake,x}; V_{\Groupoid_{\Satake,x}}) \ni \xi \longmapsto  \xi_{\natural} \in \Cc  (G/H_x) ,
\]
and using this we may define a second isomorphism
\begin{equation}
    \label{eq-new-tilde-map}
    \Cc  (\Groupoid_{\Satake,x}; V_{\Groupoid_{\Satake,x}}) \ni \xi \longmapsto  \tilde \xi \in \Cc  (G/H_x)
\end{equation}
by  means of the formula $\tilde \xi (g)  = \delta (g,x)^{1/2}{\cdot}\xi_\natural (g)$.  With this, we compute that: 

\begin{lemma} 
    \label{lem-l-2-isometric-isomorphism-with-v}
The morphism \eqref{eq-new-tilde-map}
extends to a unitary isomorphism 
\[
\pushQED{\qed}
L^2 ( \Groupoid_{\Satake,x}) \stackrel \cong \longrightarrow L^2 (G/H_x).
\qedhere \popQED
\]
\end{lemma}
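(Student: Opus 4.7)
The plan is to reduce the vector-valued lemma to the scalar lemma (Lemma~\ref{lem-l-2-isometric-isomorphism}) by exploiting two facts: the $G$-equivariant bundle isomorphism of Theorem~\ref{thm:V} gives fiberwise-unitary trivializations that convert sections of $V_{\Groupoid_{\Satake,x}}$ into ``$V$-valued'' functions on $G/H_x$ (in the induced-bundle sense), and the Hermitian metric on $V_\Satake$ is preserved under this identification.

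First I would unpack the map $\xi \mapsto \xi_\natural$. For $\gamma = (gx,g,x) \in \Groupoid_{\Satake,x}$ the fiber of $V_{\Groupoid_{\Satake,x}}$ is $V_{gx}$, and the $G$-action on $V_\Satake$ provides a unitary $u_{g,x}\colon V_x \to V_{gx}$. Under the isomorphism of Theorem~\ref{thm:V}, one has $\xi_\natural(g) = u_{g,x}^{-1}\xi(gx,g,x) \in V_x$, and the $G$-equivariance displayed there is exactly the statement that $\xi_\natural$ is an induced ($H_x$-equivariant) section, i.e.\ a section of the vector bundle over $G/H_x$ whose fiber at the identity coset is $V_x$. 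In particular, pointwise fiber norms satisfy $|\xi_\natural(g)|_{V_x}^2 = |\xi(gx,g,x)|_{V_{gx}}^2$ because each $u_{g,x}$ is a unitary isomorphism of Hermitian vector spaces.

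Next I would compute both $L^2$ norms. On the groupoid side, applying the inversion $\gamma \mapsto \gamma^{-1}$ and the Haar system formula from Section~\ref{sec-haar-system},
\[
\|\xi\|_{L^2(\Groupoid_{\Satake,x};V)}^2 = \int_{H_x\backslash G} |\xi(g^{-1}x,g^{-1},x)|_{V_{g^{-1}x}}^2 \,\delta(g^{-1},x)\, d\dot g.
\]
On the group side, using \eqref{eq-norm-in-l-2-g-mod-h} and the definition $\tilde \xi(g) = \delta(g,x)^{1/2}\xi_\natural(g)$,
\[
\|\tilde \xi\|_{L^2(G/H_x)}^2 = \int_{H_x\backslash G} \delta(g^{-1},x)\,|\xi_\natural(g^{-1})|_{V_x}^2 \, d\dot g.
\]
The two integrands agree by the pointwise identity $|\xi_\natural(g^{-1})|_{V_x}^2 = |\xi(g^{-1}x,g^{-1},x)|_{V_{g^{-1}x}}^2$ noted above, so the map is an isometry on $C_c$. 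Since $C_c$ is dense on both sides and the image is dense in $L^2(G/H_x)$, the map extends to a unitary isomorphism.

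The main subtlety is bookkeeping rather than any genuine obstacle: one must keep straight which points carry which fibers, verify that the factor $\delta(g,x)^{1/2}$ is exactly the Radon--Nikodym derivative between the Haar system measure on $\Groupoid_\Satake^x$ and the quotient measure on $H_x\backslash G$ as defined in \eqref{eq-def-of-mu-m}, and confirm that the $H_x$-covariance of $\xi_\natural$ is compatible with viewing $\tilde\xi$ as an element of $L^2(G/H_x)$ (interpreted as $L^2$-sections of the induced bundle from $V$ restricted to $H_x$). Once the scalar case is clear, the vector-valued case is a formal consequence because everything extra---the bundle $V_\Satake$ and the $G$-action---is unitary in the fiber directions.
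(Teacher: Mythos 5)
Your proof is correct and is essentially the computation the paper intends (the lemma is stated with a \qed and no written proof, following the remark ``With this, we compute that''): one checks that $\delta(g,x)^{1/2}$ converts the quotient measure on $H_x\backslash G$ into the Haar-system measure on $\Groupoid_\Satake^x$, and that the fiberwise unitarity of the $G$-action on $V_\Satake$ makes the vector-valued case reduce to the scalar Lemma~\ref{lem-l-2-isometric-isomorphism}. Your reading of the target space as $L^2$-sections of the bundle over $G/H_x$ induced from the $H_x$-action on $V_x$ is also the intended interpretation of the (slightly abusive) notation $L^2(G/H_x)$ in the statement.
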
 

\begin{lemma} 
\label{lem-compatibility-of-regular-reps-with-v-via-integration}
For every $x \in \Satake$ there  is a commuting diagram 
\[
\xymatrix{
\Cc  (G)\ar[d]_{\lambda_{G/H_x}} \ar[r]& \Cc  (\Groupoid_{\Satake};\End(V))\ar[d]^{\lambda _x}
\\
\mathfrak{B}\bigl (L^2 (G/H_x;V)\bigr) \ar[r]_\cong & \mathfrak{B}\bigl (L^2(\Groupoid_{\Satake,x};V_{\Groupoid_{\Satake,x}})\bigr )
}
\]
in which the top arrow is the integration morphism in \eqref{eq-integration-morphism-with-v}, and the bottom arrow is induced from the inverse of the unitary isomorphism in Lemma~\ref{lem-l-2-isometric-isomorphism-with-v}. \qed
\end{lemma}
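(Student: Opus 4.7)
The plan is to mimic, essentially verbatim, the proof of the scalar case (Lemma~\ref{lem-compatibility-of-regular-reps-via-integration}), only carrying the vector-bundle coefficients along throughout the computation. Since the $C^*$-norms play no role in the statement, everything reduces to a direct comparison of the two composites on the dense subalgebra $\Cc(G)$, computed pointwise on compactly supported sections.

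First, I would unpack the definitions. Fix $x\in\Satake$, $f\in\Cc(G)$, and $\xi\in\Cc(\Groupoid_{\Satake,x};V_{\Groupoid_{\Satake,x}})$. Writing $F=f_!$, the groupoid regular representation \eqref{eq-groupoid-regular-rep} gives, for $\gamma=(gx,g,x)$,
\[
(\lambda_x(F)\xi)(\gamma^{-1})=\int_{\Groupoid^{x}_{\Satake}} F(\alpha)\,\xi(\alpha^{-1}\circ\gamma^{-1})\,d\mu^{x}(\alpha),
\]
and using the Haar system formula from Section~\ref{sec-haar-system}, I would rewrite this integral as one over $H_x\backslash G$. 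Applying the transform $\xi\mapsto\tilde\xi$ from \eqref{eq-new-tilde-map} amounts to multiplying by $\delta(g,x)^{1/2}$ and then using the vector-bundle isomorphism of Theorem~\ref{thm:V} to translate $\xi$ into a $V$-valued function on $G/H_x$.

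Second, I would exchange orders of integration to replace the groupoid integral plus the inner $H_x$-integral defining $f_!$ by a single integral over $G$, using the factorization \eqref{eq-def-of-mu-m} of the Haar measure. The key computational point is that the unitary $u_{g,x}\colon V_x\to V_{gx}$ appearing in the formula \eqref{eq-integration-morphism-with-v} for $f_!$ is precisely the $G$-action on $V_\Satake$, and by the $G$-equivariance of the isomorphism in Theorem~\ref{thm:V}, this is the unitary that implements the translation structure of $\Cc(G/H_x;V)$ under the left-regular action of $G$. When the formulas are written out, the $\delta^{\pm 1/2}$ factors coming from Definitions~\ref{def-tilde-xi} (and its vector-bundle analogue) and \eqref{eq-integration-morphism-with-v} cancel, exactly as in the scalar case, and the $u_{g,x}$ factor transports values of $\tilde\xi$ at translated arguments into the correct fiber.

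Third, after the change of variables $g_1\leftrightarrow gh$ (absorbing the $H_x$-integration into a group integration, with the $G$-invariant measure on $H_x\backslash G$), the right-hand side becomes
\[
(\lambda_{G/H_x}(f)\tilde\xi)(g)=\int_G f(g_1)\tilde\xi(g_1^{-1}g)\,dg_1,
\]
which is the defining formula \eqref{eq-group-regular-rep} for the quasi-regular representation on $V$-valued functions. This establishes the commutativity of the diagram on $\Cc(G)$.

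The only step that is not mechanical is the bookkeeping of the vector-bundle coefficients: one must verify that the unitary transports $u_{g,x}$ supplied by the $G$-equivariance of $V_\Satake$ coincide with the transport induced by the identification of sections of $V_{\Satake}$ over $G$-orbits $\Satake_I$ with $V$-valued functions, transforming appropriately under $K_I$. This is precisely what the isomorphism in Theorem~\ref{thm:V} asserts, but one must handle the case $x=x_I\in\Satake_I$ and then use $G$-equivariance to propagate to arbitrary $x\in\Satake_I$. Once this compatibility is in hand, the proof is word-for-word the same as the scalar case, and the isomorphism of Hilbert spaces in Lemma~\ref{lem-l-2-isometric-isomorphism-with-v} intertwines the two representations, as asserted.
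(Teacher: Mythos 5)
The paper states this lemma without proof (the \verb|\qed| sits in the statement itself), explicitly treating it as the same routine verification as the scalar case of Lemma~\ref{lem-compatibility-of-regular-reps-via-integration}, and your computation is exactly that intended verification: rewrite the groupoid convolution as an integral over $H_x\backslash G$ via the Haar system, absorb the inner $H_x$-integral from \eqref{eq-integration-morphism-with-v} into a single $G$-integral via \eqref{eq-def-of-mu-m}, cancel the $\delta^{\pm 1/2}$ factors, and use the $G$-equivariance from Theorem~\ref{thm:V} to match the unitaries $u_{g,x}$ with the transport on sections of the induced bundle. This is correct and is essentially the same approach the paper relies on; only watch the source/target bookkeeping in your first display, where for $\gamma=(gx,g,x)$ the element $\gamma^{-1}$ lies in $\Groupoid_{\Satake,gx}$ rather than $\Groupoid_{\Satake,x}$, so the roles of $\gamma$ and $\gamma^{-1}$ should be swapped to match the convention of \eqref{eq-groupoid-regular-rep}.
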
 

With these in hand, the extension of the integration morphism from \eqref{eq-new-integration-morphism} to a morphism of $C^*$-algebras, and indeed the extension of the commuting diagram in  Lemma~\ref{lem-compatibility-of-regular-reps-with-v-via-integration} to the commuting diagram of $C^*$-algebra morphisms
\begin{equation}
    \label{eq-commuting-diagram-integration-and-regular-reps}
\xymatrix{
C^*_r (G)\ar[d]_{\lambda_{G/H_x}} \ar[r]& C^*_r(\Groupoid_{\Satake};\End(V))\ar[d]^{\lambda _x}
\\
\mathfrak{B}\bigl (L^2 (G/H_x;V)\bigr) \ar[r]_\cong & \mathfrak{B}\bigl (L^2(\Groupoid_{\Satake,x};V_{\Groupoid_{\Satake,x}})\bigr )
}
\end{equation}
is handled exactly as in the proof of Theorem~\ref{thm-integration-morphism-at-c-star-level}.

\section{The Harish-Chandra principle} 
\label{sec-harish-chandra-principle}

\subsection{Statement of the principle} 
\label{sec-statement-of-h-c-principle}
Here is our main result:

\begin{theorem}[Harish-Chandra principle]
\label{thm-harish-chandra-principle}
Let $G$ be a real reductive group with Iwasawa decomposition $G{=}KAN$ and corresponding system of simple restricted roots $\Sigma$, and let 
 $\pi$ be a tempered irreducible representation of  $G$.  Exactly one of the following is true:
\begin{enumerate}[\rm (i)]

\item either $\pi$ is an admissible topological discrete series representation, modulo center, in the sense of of Definition~\textup{\ref{def-tds-mod-center}}, or  

\item $\pi$ may be embedded as a  subrepresentation of some  parabolically induced representation $\Ind_{P_I}^G \sigma$, where $I$ is a proper subset of $\Sigma$ and $\sigma$ is a tempered irreducible representation of  $L_I$.
\end{enumerate}
\end{theorem}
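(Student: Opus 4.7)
The plan is to recast the dichotomy as a single equality of ideals in $C^*_r(G)$. Write $I_{\cmc} := C^*_{\cmc}(G)$ and $I_{\cusp} := C^*_{\cusp}(G)$. By Theorem~\ref{thm-spectrum-of-cmc-ideal}, the spectrum of $I_{\cmc}$ consists precisely of the representations in class~(i), and by Theorem~\ref{thm-cuspidal-reps-nonzero-on-cuspidal-ideal} a tempered irreducible representation $\pi$ factors through $C^*_r(G)/I_{\cusp}$ precisely when it satisfies~(ii). Since any irreducible representation of a $C^*$-algebra either vanishes on a given ideal or lies in its spectrum but not both, the Harish-Chandra principle is equivalent to $I_{\cmc} = I_{\cusp}$.

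\textbf{Key tool.} For each finite-dimensional unitary representation $V$ of $K$ the integration morphism $\mu_V\colon C^*_r(G) \to C^*_r(\Groupoid_\Satake;\End(V))$ of Section~\ref{sec-vector-bundles-on-satake} interacts with the short exact sequence
\[
0 \to C^*_r(\Groupoid_{\Satake_\Sigma};\End(V)) \to C^*_r(\Groupoid_\Satake;\End(V)) \to C^*_r(\Groupoid_{\partial\Satake};\End(V)) \to 0
\]
coming from the decomposition $\Satake = \Satake_\Sigma \sqcup \partial\Satake$. The goal is to show that both $I_{\cmc}$ and $I_{\cusp}$ coincide with
\[
M \;:=\; \bigcap_V \mu_V^{-1}\bigl(C^*_r(\Groupoid_{\Satake_\Sigma};\End(V))\bigr),
\]
the preimage of the interior ideal, intersected over all~$V$. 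Since the interior ideal is simultaneously the image of the inclusion and the kernel of the quotient in the exact sequence, obtaining these two identifications yields $I_{\cmc} = M = I_{\cusp}$.

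\textbf{$M = I_{\cusp}$.} An element $a \in C^*_r(\Groupoid_\Satake;\End(V))$ lies in the interior ideal if and only if $\lambda_x(a)=0$ for every $x \in \partial\Satake$, and by $G$-equivariance of the source fibers this reduces to $\lambda_{x_I}(a)=0$ for each $I\subsetneq\Sigma$.  Via the commuting diagram~\eqref{eq-commuting-diagram-integration-and-regular-reps} and $H_{x_I}=K_IN_I$, the condition $\lambda_{x_I}(\mu_V(f))=0$ becomes $\lambda_{G/K_IN_I;V}(f)=0$.  The Peter--Weyl decomposition
\[
L^2(G/N_I) \;\cong\; \bigoplus_{[W]} L^2(G/K_IN_I;W^*)\otimes W,
\]
summed over $W\in\widehat{K_I}$ (each appearing in the restriction of some $V\in\widehat K$), shows that $\lambda_{G/N_I}(f)=0$ if and only if $\lambda_{G/K_IN_I;V}(f)=0$ for every such $V$.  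Hence $f \in M$ if and only if $f \in I_{\cusp}$.

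\textbf{$M = I_{\cmc}$ and main obstacle.}  The containment $M \subseteq I_{\cmc}$ follows from the observation that $\lambda_{x_\Sigma}$ maps the interior ideal onto $C^*_r(\Groupoid_\Sigma;V) \subseteq \mathfrak{B}(L^2(G/K;V))$ (by $G$-transitivity on $\Satake_\Sigma$ and Lemma~\ref{lem-groupoid-c-star-algebra-as-compacts}): if $\mu_V(f)$ lies in the interior ideal then $\lambda_{G/K;V}(f) = \lambda_{x_\Sigma}(\mu_V(f)) \in C^*_r(\Groupoid_\Sigma;V)$ for every~$V$, and Theorem~\ref{theorem-comapct-mod-center-ideal} gives $f \in I_{\cmc}$.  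The main obstacle will be the reverse containment $I_{\cmc} \subseteq M$: showing that every $f$ acting as a compact-modulo-$A_\Sigma$ operator on every $L^2(G/K;V)$ has its integration-morphism image $\mu_V(f)$ acting as zero on every boundary source fiber $L^2(G/K_IN_I;V)$.  This is Harish-Chandra's exclusion statement---admissible topological discrete series modulo center are not weakly contained in any $L^2(G/N_I)$ with $I\subsetneq\Sigma$---and the plan is to derive it from the groupoid picture using the identification of the interior ideal with $\mathfrak{K}(L^2(\circG/K;V)) \otimes C^*_r(A_\Sigma)$ of Lemma~\ref{lem-groupoid-c-star-algebra-as-compacts} together with the explicit formula for $\mu_V$ of Definition~\ref{def-integration-morphism}.
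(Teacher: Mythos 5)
Your architecture is exactly the paper's: reduce the dichotomy to the ideal identity $C^*_{\cmc}(G)=C^*_{\cusp}(G)$, and prove that identity by showing both ideals are the preimage, under the integration morphisms, of the interior ideal in $C^*_r(\Groupoid_\Satake;\End(V))$, which is simultaneously the kernel of restriction to the boundary. Your treatment of $M=I_{\cusp}$ and of $M\subseteq I_{\cmc}$ matches the paper's Lemma~\ref{lem-integration-characterizations-of-cpt-and-cusp-ideals}. But the proposal has a genuine gap where you flag the ``main obstacle'': you leave $I_{\cmc}\subseteq M$ unproven and propose to attack it via the explicit formula for the integration morphism and a Harish-Chandra-style exclusion statement. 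That route would amount to redoing the classical harmonic analysis that the whole construction is designed to avoid, and as written it is not a proof.

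The missing ingredient is soft, and it is the paper's Lemma~\ref{lem:dense_source_rep_faithful}: the source-fiber representation $\lambda_{x_\Sigma}$ is \emph{faithful on all of} $C^*_r(\Groupoid_\Satake;\End(V))$, because the function $x\mapsto\|\lambda_x(a)\|$ is lower semicontinuous and the interior orbit is dense in $\Satake$ (so the supremum over all fibers is already attained at the interior fiber). Granting this, the hard-looking containment evaporates: if $f\in I_{\cmc}$, then by Theorem~\ref{theorem-comapct-mod-center-ideal} the operator $\lambda_{G/K;V}(f)=\lambda_{x_\Sigma}(\mu_V(f))$ lies in $C^*_r(\Groupoid_\Sigma;V)$, which by Lemma~\ref{lem-groupoid-c-star-algebra-as-compacts} is exactly the image of the interior ideal under the \emph{injective} map $\lambda_{x_\Sigma}$; injectivity then forces $\mu_V(f)$ itself to lie in the interior ideal, i.e.\ $f\in M$. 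No analysis of matrix coefficients or of the explicit formula for $\mu_V$ is needed. A second, smaller point: your claim that the interior ideal equals the joint kernel of the boundary source-fiber representations is equivalent to exactness of the reduced sequence $0\to C^*_r(\Groupoid_{\Interior};V)\to C^*_r(\Groupoid_\Satake;V)\to C^*_r(\Groupoid_{\partial\Satake};V)\to 0$, which is not automatic for reduced groupoid $C^*$-algebras; it requires the amenability of the boundary reductions (each boundary orbit groupoid is Morita equivalent to the abelian group $A_I$), as in the paper's Lemma~\ref{lem-short-exact-sequence}. You should cite or prove that exactness rather than assume it.
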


This is basically due to Harish-Chandra, although a precise statement does not appear in his work.  For that, see \cite{Langlands89} or \cite{Trombi77}.  By applying the same principle again to the representation $\sigma$ in the statement of the theorem, and by using the principle of induction in stages \cite[(7.5)]{Knapp:representation_theory}, one obtains from the theorem the following well-known consequence:

\begin{corollary} 
Let $\pi$ tempered irreducible representation of a real reductive group $G$ with Iwasawa decomposition $G=KAN$.  There is a standard parabolic subgroup $P_I=M_IA_IN_I$ of $G$, an admissible topological discrete series representation $\sigma$ of $M_I$,  a unitary character $e^{i\nu}$ of  $A_I$, and an embedding of $\pi$ into the parabolically induced representation $\Ind_{P_I}^G\sigma \otimes e^{i\nu}$. \qed
\end{corollary}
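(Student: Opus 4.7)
The plan is to deduce the corollary from Theorem~\ref{thm-harish-chandra-principle} by induction on $|\Sigma|$, the semisimple real rank of $G$, telescoping the successive applications of the Harish-Chandra principle into a single parabolic induction by means of induction in stages.

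In the base case $|\Sigma|=0$, the split decomposition \eqref{eq-split-decomposition} forces $\circG$ to be compact, so every tempered irreducible representation $\pi$ takes the form $\pi_0 \otimes e^{i\nu}$ with $\pi_0$ a finite-dimensional unitary representation of $\circG$ and $e^{i\nu}$ a unitary character of $A_\Sigma$; taking $I = \Sigma$ (so that $P_I = G$, $M_I = \circG$, $A_I = A_\Sigma$, and $N_I$ is trivial) makes $\Ind_{P_I}^G(\pi_0 \otimes e^{i\nu})$ coincide with $\pi$.

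For the inductive step, apply Theorem~\ref{thm-harish-chandra-principle} to $\pi$. Case (i) is handled exactly as in the base case, with $I = \Sigma$. In case (ii) there is a proper subset $J \subsetneq \Sigma$ and a tempered irreducible representation $\sigma'$ of $L_J = M_J A_J$ with $\pi \hookrightarrow \Ind_{P_J}^G \sigma'$. Because $L_J$ is a direct product and $A_J$ is abelian, $\sigma'$ decomposes as $\tau \otimes \chi$ with $\tau$ a tempered irreducible representation of $M_J$ and $\chi$ a unitary character of $A_J$. The group $M_J$ is itself a real reductive group whose system of simple restricted roots is naturally identified with $J$ and so has cardinality $|J| < |\Sigma|$, so the inductive hypothesis applied to $\tau$ produces a standard parabolic subgroup $Q$ of $M_J$, an admissible topological discrete series representation $\sigma_0$ of its Levi, a unitary character $e^{i\nu_0}$ of its split component, and an embedding $\tau \hookrightarrow \Ind_Q^{M_J}(\sigma_0 \otimes e^{i\nu_0})$.

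Parabolic induction preserves subrepresentations (since, by Proposition~\ref{lem-clare-lemma}, it is given by tensoring with a fixed Hilbert $C^*$-module, an operation that preserves isometric inclusions), so tensoring with $\chi$ and inducing from $P_J$ up to $G$ yields an embedding
\[
\pi \hookrightarrow \Ind_{P_J}^G\bigl(\Ind_Q^{M_J}(\sigma_0 \otimes e^{i\nu_0}) \otimes \chi\bigr) ,
\]
which, by induction in stages and the standard correspondence between standard parabolic subgroups of $M_J$ and standard parabolic subgroups of $G$ contained in $P_J$, rewrites as $\Ind_{P_I}^G(\sigma_0 \otimes e^{i\nu_0} \otimes \chi)$ for a suitable $I \subseteq J$; under this identification, the split torus $A_I$ factors as the product of the split component of $Q$ with $A_J$, so that $e^{i\nu_0} \otimes \chi$ is a unitary character of $A_I$ and $\sigma_0$ is an admissible topological discrete series representation of $M_I$. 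The main obstacle is the bookkeeping of this final step, which is however a purely structural consequence of the Langlands decomposition and standard parabolic theory.
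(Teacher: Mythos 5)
Your proposal is correct and takes essentially the same route as the paper: the paper's one-line proof is precisely to iterate Theorem~\ref{thm-harish-chandra-principle} on the inducing datum and collapse the resulting tower by induction in stages, which is exactly what your induction on $|\Sigma|$ formalizes (with case (i) of the theorem corresponding to $I=\Sigma$, where $M_\Sigma=\circG$ and $A_\Sigma$ is the split center, matching Definition~\ref{def-tds-mod-center}). The extra details you supply --- splitting $\sigma'=\tau\otimes\chi$ over $L_J=M_JA_J$, the fact that interior tensor product with the Clare module preserves isometric inclusions, and the identification of standard parabolics of $M_J$ with standard parabolics of $G$ contained in $P_J$ --- are all standard and correctly handled.
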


The fact that admissibility is built into our approach to the discrete series from the beginning, as in  Theorem~\ref{thm:Spec_J},  has another interesting consequence: 

\begin{corollary}
    Every tempered irreducible unitary representation  of a real reductive group   is admissible.
\end{corollary}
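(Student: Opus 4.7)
The plan is to leverage the preceding corollary, which embeds every tempered irreducible representation $\pi$ of $G$ into a parabolically induced representation $\Ind_{P_I}^G (\sigma \otimes e^{i\nu})$, with $\sigma$ an admissible topological discrete series representation of $M_I$ (admissibility here being taken with respect to $K_I = K \cap M_I$, the maximal compact subgroup of $M_I$).  Since $K$-admissibility evidently passes to subrepresentations --- the $K$-isotypical subspace of $\pi$ sits inside the $K$-isotypical subspace of the ambient representation --- everything reduces to showing that $\Ind_{P_I}^G (\sigma \otimes e^{i\nu})$ is itself $K$-admissible.

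For this I would use the compact picture of parabolic induction.  The Iwasawa decomposition gives $G = KP_I$ and $K \cap P_I = K_I$, so as a unitary representation of $K$ the induced representation $\Ind_{P_I}^G (\sigma \otimes e^{i\nu})$ is equivalent to the compactly induced representation $\Ind_{K_I}^K(\sigma|_{K_I})$; the character $e^{i\nu}$ and the modular $\delta^{1/2}$ twist are both trivial on $K_I \subseteq M_I$, which is disjoint from $A_I N_I$.  Frobenius reciprocity then yields, for every irreducible unitary representation $\tau$ of $K$,
\[
  \Hom_K\bigl(\tau,\, \Ind_{P_I}^G (\sigma \otimes e^{i\nu}) \bigr) \;\cong\; \Hom_{K_I}\bigl(\tau|_{K_I},\, \sigma|_{K_I}\bigr).
\]
The $K_I$-representation $\tau|_{K_I}$ is a finite direct sum of irreducibles of $K_I$, each of which appears in $\sigma|_{K_I}$ with finite multiplicity by hypothesis, so the right-hand side is finite-dimensional.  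Hence every $K$-type of $\Ind_{P_I}^G (\sigma \otimes e^{i\nu})$ occurs with finite multiplicity, the induced representation is $K$-admissible, and therefore so is the subrepresentation $\pi$.

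I do not anticipate any substantive obstacle.  The only subtlety worth flagging is the unwinding of ``admissible topological discrete series, modulo center'' of $M_I$ into $K_I$-admissibility: under the split decomposition $M_I = {}^\circ M_I \times A_{\Sigma,M_I}$, the representation $\sigma$ is the tensor product of a $K_I$-admissible topological discrete series of ${}^\circ M_I$ with a one-dimensional unitary character of $A_{\Sigma,M_I}$, and since $K_I \subseteq {}^\circ M_I$ the central character factor does not affect $K_I$-admissibility.
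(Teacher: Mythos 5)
Your argument is correct and is essentially the paper's own proof, which simply invokes Frobenius reciprocity to show that parabolic induction from an admissible representation yields an admissible representation (citing Knapp, Prop.\,8.4); you have merely spelled out the compact-picture reduction, the passage to subrepresentations, and the unwinding of the ``modulo center'' qualifier, all of which the paper leaves implicit.
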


\begin{proof}
    Frobenius reciprocity shows that the parabolically induced representation obtained from an admissible representation is again admissible; compare \cite[Prop.\,8.4]{Knapp:representation_theory}.
\end{proof}

This is again a well-known result of Harish-Chandra \cite[Thm.\,6]{HarishChandra53}, who proved the same for all irreducible unitary representations.  The approach to the result presented here resembles Bernstein's proof of the admissibility of smooth, irreducible representations of reductive $p$-adic groups \cite{Bernstein92notes}.

\subsection{Proof of the main theorem}

The Harish-Chandra principle, as stated in Theorem~\ref{thm-harish-chandra-principle}, is equivalent to the following identity: 
 
\begin{theorem}\label{thm-compact-is-cuspidal}
If $G$ is any real reductive group, then $C^*_{\cmc}(G)= C^*_{\cusp}(G)$.
\end{theorem}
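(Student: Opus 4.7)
The plan is to identify, for every finite-dimensional unitary representation $V$ of $K$, both $C^*_{\cmc}(G)$ and $C^*_{\cusp}(G)$ with the preimage under the integration morphism
\[
f\longmapsto f_!\,, \qquad C^*_r(G) \longrightarrow C^*_r(\Groupoid_\Satake;\End(V))
\]
of the interior ideal in the short exact sequence
\[
0 \to C^*_r(\Groupoid_{\Interior};\End(V)) \to C^*_r(\Groupoid_\Satake;\End(V)) \to C^*_r(\Groupoid_{\partial\Satake};\End(V)) \to 0,
\]
where the intersection is taken over all $V$.  Since both ideals coincide with the same preimage, the theorem will follow.

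For $C^*_{\cmc}(G)$, Theorem~\ref{theorem-comapct-mod-center-ideal} says that $f\in C^*_{\cmc}(G)$ if and only if $\lambda_{G/K;V}(f)\in C^*_r(\Groupoid_\Sigma;V)$ for every $V$.  The interior orbit is $\Satake_\Sigma = G/KA_\Sigma$, and $H_{x_\Sigma}=K$ by \eqref{eq-def-of-h-i} (since $N_\Sigma=\{e\}$), so the commuting diagram \eqref{eq-commuting-diagram-integration-and-regular-reps} identifies $\lambda_{G/K;V}(f)$ with $\lambda_{x_\Sigma}(f_!)$.  Because all source fibers over $\Interior$ are $G$-translates of one another, $\lambda_{x_\Sigma}$ is faithful on $C^*_r(\Groupoid_{\Interior};\End(V))$, and Lemma~\ref{lem-groupoid-c-star-algebra-as-compacts} identifies its image with $C^*_r(\Groupoid_\Sigma;V)$.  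This yields the desired characterization of $C^*_{\cmc}(G)$.

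For $C^*_{\cusp}(G)$, we first convert the defining condition --- vanishing of $\lambda_{G/N_I}(f)$ for every proper $I\subsetneq\Sigma$ --- into a condition involving the $V$-valued representations. Since $H_I=K_IN_I$ and the right action of $K_I$ on $G/N_I$ commutes with the left $G$-action, a Peter--Weyl decomposition of $L^2(G/N_I)$ under $K_I$, combined with the fact that every irreducible $K_I$-representation appears in the restriction to $K_I$ of some irreducible $K$-representation, yields
\[
\ker(\lambda_{G/N_I}) \,=\, \bigcap_V \ker\bigl(\lambda_{G/H_I;V}\bigr) .
\]
Another application of \eqref{eq-commuting-diagram-integration-and-regular-reps}, together with the $G$-equivariance of the boundary orbits, converts the right-hand side into the condition that $\lambda_x(f_!)=0$ for every $x\in \Satake_I$.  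As $I$ ranges over the proper subsets of $\Sigma$, this sweeps out every point of $\partial\Satake$; hence $f\in C^*_{\cusp}(G)$ if and only if $f_!$ lies in the kernel of $C^*_r(\Groupoid_\Satake;\End(V))\to C^*_r(\Groupoid_{\partial\Satake};\End(V))$ for every $V$, i.e.\ in the interior ideal.

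The most delicate step, I expect, will be the Peter--Weyl reduction of $C^*_{\cusp}(G)$ to the $V$-valued framework, since the cuspidal ideal is defined via the scalar representations $L^2(G/N_I)$, whereas the groupoid formalism processes $L^2(G/H_I;V)$; interchanging the $K_I$-type decomposition with the formation of kernels inside $C^*_r(G)$ must be carried out with care.  The remaining ingredients --- the faithfulness of $\lambda_{x_\Sigma}$ on the interior ideal, and the collective faithfulness of the family of boundary source-fiber representations on the quotient $C^*_r(\Groupoid_{\partial\Satake};\End(V))$ --- are standard for reduced $C^*$-algebras of continuous-family groupoids, but require explicit use of the orbit structure of $\Satake$ to conclude.
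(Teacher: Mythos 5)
Your proposal is correct and takes essentially the same route as the paper: the characterization of $C^*_{\cmc}(G)$ as the preimage of the interior ideal and of $C^*_{\cusp}(G)$ as the preimage of the kernel of restriction to $\partial\Satake$ (via the $K_I$-isotypical decomposition of $L^2(G/N_I)$) is exactly the content of Lemmas~\ref{lem-cuspidal-ideal-and-v-isotypical-spaces} and~\ref{lem-integration-characterizations-of-cpt-and-cusp-ideals}, after which the theorem follows from the short exact sequence. The one input you take as given --- that $C^*_r(\Groupoid_{\Interior};V)$ is precisely the kernel of $C^*_r(\Groupoid_{\Satake};V)\to C^*_r(\Groupoid_{\partial\Satake};V)$, i.e.\ exactness of the \emph{reduced} sequence in the middle --- is not automatic and is established in Lemma~\ref{lem-short-exact-sequence} using the amenability of the boundary orbit groupoids $G/H_I\times_{A_I}G/H_I$, each being Morita equivalent to the abelian group $A_I$.
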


We shall use the integration morphisms in \eqref{eq-commuting-diagram-integration-and-regular-reps} to reduce the proof of Theorem~\ref{thm-compact-is-cuspidal} to a much easier  assertion about ideals in the reduced  $C^*$-algebra of the Satake groupoid. 

\begin{definition}
    If $K$ is a finite-dimensional unitary representation of $K$, and if $I$ is any subset of $\Sigma$, then we shall write 
\[
L^2(G/H_I;V)  = [L^2 (G/N_I)\otimes V]^{K_I}.
\]
Recall that $H_I = K_IN_I$; see \eqref{eq-def-of-h-i}; in the display, we restrict the representation $V$ from $K$ to $K_I$.
\end{definition}

\begin{lemma}
\label{lem-cuspidal-ideal-and-v-isotypical-spaces}
If $G{=}KAN$ is any real reductive group, then 
    \[
    C^*_{\cusp}(G) = \bigcap_{\substack{I\subsetneqq \Sigma\\ \text{\rm $V$\,f.d.\,rep.\,of\,$K$}}}
    \operatorname{ker} \bigl (  C^*_r(G) \longrightarrow \mathfrak{B}( L^2 (G/H_I;V) \bigr )
    \]
where the intersection is over all subsets of $\Sigma$ other than $\Sigma$ itself, and over all finite-dimensional unitary representations of $K$.
\end{lemma}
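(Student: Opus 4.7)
The plan is to prove the two inclusions separately; both are essentially Peter--Weyl/Frobenius reciprocity calculations once the $(G \times K_I)$-bi-equivariant structure of $L^2(G/N_I)$ is recognized.

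The inclusion $\supseteq$ is immediate from the definitions. If $f \in C^*_{\cusp}(G)$ then $\lambda_{G/N_I}(f) = 0$ on $L^2(G/N_I)$ for every $I \subsetneq \Sigma$, hence $f$ acts as zero on $L^2(G/N_I) \otimes V$ (via the first tensor factor), so it also acts as zero on the $K_I$-fixed subspace $[L^2(G/N_I) \otimes V]^{K_I} = L^2(G/H_I;V)$ for every finite-dimensional unitary representation $V$ of $K$.

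For the reverse inclusion, fix $f$ in the right-hand intersection and $I \subsetneq \Sigma$; I must show $\lambda_{G/N_I}(f) = 0$. Since $K_I$ normalizes $N_I$, right translation by $K_I$ descends to $G/N_I$ and commutes with the left $G$-action, so $L^2(G/N_I)$ is a $(G \times K_I)$-representation. Its $K_I$-isotypical decomposition has the form
\[
L^2(G/N_I) \;=\; \bigoplus_{\tau \in \widehat{K_I}} \, \tau \otimes M_\tau,
\]
where $G$ acts solely on the multiplicity space $M_\tau$. For any finite-dimensional unitary representation $V$ of $K$,
\[
L^2(G/H_I;V) \;=\; [L^2(G/N_I) \otimes V]^{K_I} \;\cong\; \bigoplus_{\tau \in \widehat{K_I}} M_\tau \otimes \Hom_{K_I}(\tau^*, V),
\]
with $G$ still acting only on the $M_\tau$ factor. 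The hypothesis therefore forces $f$ to act as zero on $M_\tau$ whenever $\tau^*$ occurs in $V|_{K_I}$ for some $V$.

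What I expect to be the only non-automatic step, and therefore the main (mild) obstacle, is knowing that \emph{every} irreducible $\tau$ of $K_I$ arises this way. For this I would invoke the standard fact that the induced representation $V := \Ind_{K_I}^{K}(\tau^*)$ is finite-dimensional (since $K/K_I$ is compact) and contains $\tau^*$ in its restriction to $K_I$ by Frobenius reciprocity. Hence $f$ annihilates every multiplicity space $M_\tau$, so $\lambda_{G/N_I}(f) = 0$ on the whole of $L^2(G/N_I)$, completing the argument.
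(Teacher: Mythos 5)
Your proof follows essentially the same route as the paper's: the paper's (very terse) proof consists exactly of the $K_I$-isotypical decomposition of $L^2(G/N_I)$ together with the fact that every irreducible unitary representation of $K_I$ occurs in the restriction to $K_I$ of some finite-dimensional unitary representation of $K$. Your identification of that last fact as the one non-automatic step is exactly right.

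However, your justification of that fact contains a genuine error: $V:=\Ind_{K_I}^{K}(\tau^*)$ is \emph{not} finite-dimensional in general. Compactness of $K/K_I$ is not enough; the induced representation is the space of sections of a vector bundle over $K/K_I$, which is infinite-dimensional whenever $\dim K_I<\dim K$. This happens in the cases of interest: for $G=SL(2,\R)$ and $I=\emptyset$ one has $K=SO(2)$ and $K_I=\{\pm 1\}$, so $K/K_I$ is a circle. The repair is standard and short: $\Ind_{K_I}^{K}(\tau^*)$ is a nonzero unitary representation of the compact group $K$, hence by Peter--Weyl it contains a finite-dimensional irreducible $K$-subrepresentation $V$, and Frobenius reciprocity, $\Hom_{K}\bigl(V,\Ind_{K_I}^{K}(\tau^*)\bigr)\cong\Hom_{K_I}\bigl(V|_{K_I},\tau^*\bigr)$, then shows that $\tau^*$ occurs in $V|_{K_I}$. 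With that substitution (and noting that the lemma's intersection ranges only over finite-dimensional $V$, so this finiteness is genuinely needed), your argument is complete and agrees with the paper's.
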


\begin{proof}
    This follows from the $K_I$-isotypical decomposition 
    \[
    L^2 (G/N_I) \cong \bigoplus _{W\in \widehat {K_I}} \bigl [ L^2 (G/N_I)\otimes W \bigr ]^{K_I} \otimes W^*
    \]
    together with the fact that every irreducible unitary representation of $K_I$ occurs within some finite-dimensional unitary representation of $K$, upon restriction.
\end{proof}

\begin{lemma}
\label{lem:dense_source_rep_faithful}
The source fiber representation 
\[
\lambda_{x_\Sigma}\colon C_r^*(\Groupoid_{\Satake};V)
\longrightarrow \mathfrak{B}\bigl ( L^2(G/K;V) \bigr)
\]
is faithful.  Under the identification
\[
 L^2(G/K;V) \stackrel \cong \longrightarrow L^2(G/H_{\Sigma};V) \otimes L^2 (A_{\Sigma})
 \]
 the 
restriction of $\lambda_{x_\Sigma}$ to the ideal $C^*_r (\Groupoid_{\Interior};V)$ is an isomorphism
\[
C_r^*(\Groupoid_{\Interior};V) \xrightarrow{\cong} \mathfrak{K} (L^2(G/H_{\Sigma}; V))\otimes C^*(A_\Sigma)
\]
\end{lemma}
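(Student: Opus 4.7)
My plan is to prove the two assertions separately, using density of the interior orbit for faithfulness and a gauge-groupoid identification for the ideal computation.

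For faithfulness of $\lambda_{x_\Sigma}$, the key inputs are: (i) the $G$-orbit $\Satake_\Sigma = G\cdot x_\Sigma = \Interior$ is open and dense in $\Satake$; (ii) the $G$-action on $\Groupoid_{\Satake}$ intertwines $\lambda_{x_\Sigma}$ and $\lambda_{gx_\Sigma}$ via a unitary isomorphism of source-fiber Hilbert spaces, so $\|\lambda_{gx_\Sigma}(f)\| = \|\lambda_{x_\Sigma}(f)\|$ for every $g\in G$ and every $f\in C^*_r(\Groupoid_{\Satake};V)$; and (iii) the standard lower semi-continuity of $x\mapsto \|\lambda_x(f)\|$ on the unit space of a continuous family groupoid.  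Combining these, for any $x\in \Satake$ I would take a sequence $g_n x_\Sigma \to x$ and obtain
\[
\|\lambda_x(f)\| \;\le\; \liminf_{n}\|\lambda_{g_n x_\Sigma}(f)\| \;=\; \|\lambda_{x_\Sigma}(f)\|.
\]
Taking the supremum in the defining formula \eqref{eq-reduced-groupoid-c-star-norm} for the reduced norm then forces $\|\lambda_{x_\Sigma}(f)\| = \|f\|_{C^*_r(\Groupoid_{\Satake};V)}$, and faithfulness follows.

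For the second assertion, I would unpack the structure of $\Groupoid_{\Interior} = \Groupoid_{\Satake}|_{\Satake_\Sigma}$.  Since $\Satake_\Sigma$ is a single $G$-orbit, $\Groupoid_{\Interior}$ is a transitive continuous family groupoid over $\Satake_\Sigma$, whose isotropy at $x_\Sigma$ is the quotient of the $G$-stabilizer $K A_\Sigma$ by the Mohsen subgroup $H_\Sigma$; by the explicit description of the family $\{H_m\}$ in \cite{BraddHigsonYunckenOshimaPart1}, this quotient is the central abelian group $A_\Sigma$.  Hence $\Groupoid_{\Interior}$ is the gauge groupoid of the principal $A_\Sigma$-bundle $G/K \to G/K\cdot A_\Sigma$—precisely the groupoid $\Groupoid_\Sigma$ introduced in Section~\ref{sec:noncompact_center}.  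Lemma~\ref{lem-groupoid-c-star-algebra-as-compacts} then supplies the isomorphism $C^*_r(\Groupoid_{\Interior};V) \cong \mathfrak{K}(L^2(G/H_\Sigma;V))\otimes C^*_r(A_\Sigma)$, and Lemma~\ref{lem-l-2-isometric-isomorphism-with-v} identifies this with the restriction of $\lambda_{x_\Sigma}$.

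The main obstacle I anticipate is the groupoid-theoretic identification of $\Groupoid_{\Interior}$ with $\Groupoid_\Sigma$: while the $C^*$-algebra match is essentially forced by Morita equivalence of transitive groupoids with the same isotropy, verifying the explicit identification—both of the Mohsen quotient with $A_\Sigma$ and of the Haar-system conventions (including the modular function $\delta$ of Lemma~\ref{lem-introducing-the-delta-function}) on the interior—requires careful bookkeeping against the conventions of Section~\ref{sec-haar-system} and the construction in the first paper.  Once this match is in hand, all remaining assertions follow by transport of structure.
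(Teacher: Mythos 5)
Your proposal is correct and follows essentially the same route as the paper: lower semi-continuity of $x\mapsto\|\lambda_x(f)\|$ combined with density of the interior orbit (and the implicit $G$-equivariance making the norm constant along that orbit) gives faithfulness, and the identification of $\Groupoid_{\Interior}$ with the gauge groupoid $\Groupoid_\Sigma$ together with Lemma~\ref{lem-groupoid-c-star-algebra-as-compacts} gives the second assertion. The paper's proof is just a terser version of yours, leaving the equivariance step and the $\Groupoid_{\Interior}\cong\Groupoid_\Sigma$ bookkeeping implicit.
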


\begin{proof}
The source-fiber representations 
\[
\lambda_x \colon C^*_r (\Groupoid_{\Satake} ; V) \longrightarrow \mathfrak{B}(L^2 (\Groupoid_{\Satake,x}))\qquad (x\in \Satake)
\]
have the semicontinuity property
\[
\|\lambda_x(f)\| \le \liminf_{y\to x}\|\lambda_y(f)\|\qquad \forall f\in C^*_r (\Groupoid_{\Satake}).
\]
The lemma follows from this, since $\Interior$ is dense in $\Satake$.
\end{proof}

\begin{lemma}
\label{lem-integration-characterizations-of-cpt-and-cusp-ideals}
The compact modulo center and cuspidal ideals in $C^*_r(G)$ may described using the integration morphisms in \eqref{eq-commuting-diagram-integration-and-regular-reps} as follows: 
\[
C^*_{\cmc}(G) = \bigl \{\, f \in C^*_r(G) : f_! \in C^*_r(\Groupoid_{\Interior};V)\,\,\,\forall\, \text{\rm $V$\,f.d.\,rep.\,of\,$K$}  \,\bigr \} 
\]
and
\[
C^*_{\cusp}(G) = \bigl \{\, f \in C^*_r(G) : f_! \vert_{\partial \Satake} =0  \in C^*_r(\Groupoid_{\partial \Satake};V)\,\,\,\forall\, 
\text{\rm $V$\,f.d.\,rep.\,of\,$K$}  \,\bigr \} .
\]
\end{lemma}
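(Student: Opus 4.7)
The plan is to reduce both identities to the commuting diagram \eqref{eq-commuting-diagram-integration-and-regular-reps}, which for every $x\in\Satake$ and every finite-dimensional unitary $K$-representation $V$ identifies the composition
\[
C^*_r(G)\xrightarrow{\;!\;} C^*_r(\Groupoid_\Satake;\End(V))\xrightarrow{\;\lambda_x\;} \mathfrak{B}\bigl(L^2(\Groupoid_{\Satake,x};V_{\Groupoid_{\Satake,x}})\bigr)
\]
with the regular representation $\lambda_{G/H_x;V}$ on $L^2(G/H_x;V)$ (up to the unitary of Lemma~\ref{lem-l-2-isometric-isomorphism-with-v}). In particular, for each $I\subseteq\Sigma$, specializing to $x=x_I$ identifies $\lambda_{x_I}(f_!)$ with $\lambda_{G/H_I;V}(f)$.

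For the first identity, take $I=\Sigma$, so $H_\Sigma=K$ and the composition is the regular representation on $L^2(G/K;V)$ that appears in Theorem~\ref{theorem-comapct-mod-center-ideal}. That theorem characterizes $C^*_{\cmc}(G)$ as precisely those $f\in C^*_r(G)$ whose image in $\mathfrak{B}(L^2(G/K;V))$ lies in $C^*_r(\Groupoid_\Sigma;V)$ for every $V$. By Lemma~\ref{lem:dense_source_rep_faithful}, $\lambda_{x_\Sigma}$ is faithful on $C^*_r(\Groupoid_\Satake;\End(V))$ and carries the ideal $C^*_r(\Groupoid_\Interior;\End(V))$ isomorphically onto $C^*_r(\Groupoid_\Sigma;V)$. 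Hence $f_!\in C^*_r(\Groupoid_\Interior;\End(V))$ iff $\lambda_{x_\Sigma}(f_!)\in C^*_r(\Groupoid_\Sigma;V)$ iff $f\in C^*_{\cmc}(G)$; quantifying over all $V$ gives the first identity.

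For the second identity, Lemma~\ref{lem-cuspidal-ideal-and-v-isotypical-spaces} describes $C^*_{\cusp}(G)$ as the intersection of kernels of the regular representations on $L^2(G/H_I;V)$ for $I\subsetneq\Sigma$ and all $V$; via the commuting diagram this becomes $\lambda_{x_I}(f_!)=0$ for all such $I$ and $V$. To convert this into the vanishing of $f_!|_{\partial\Satake}$ in $C^*_r(\Groupoid_{\partial\Satake};\End(V))$, I would note first that for $x\in\partial\Satake$ the entire source-fiber $\Groupoid_{\Satake,x}$ is contained in $\Groupoid_{\partial\Satake}$ (since $G$-orbits partition $\Satake$ and targets of source-$x$ morphisms lie in the orbit of $x$), so $\lambda_x$ factors through the restriction map and $\lambda_x(f_!)=\lambda_x(f_!|_{\partial\Satake})$; and second that $G$-equivariance of $\Groupoid_\Satake$ makes $\lambda_x$ unitarily equivalent to $\lambda_{gx}$ for every $g\in G$, via right composition by a morphism from $gx$ to $x$, so vanishing at $x_I$ forces vanishing on the whole orbit $\Satake_I$. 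Combined with the orbit decomposition $\partial\Satake=\bigsqcup_{I\subsetneq\Sigma}\Satake_I$ and the sup-over-source-fiber-reps formula~\eqref{eq-reduced-groupoid-c-star-norm} for the reduced norm, these observations give $f_!|_{\partial\Satake}=0$ iff $\lambda_{x_I}(f_!)=0$ for every $I\subsetneq\Sigma$. The only genuine technical input is Lemma~\ref{lem:dense_source_rep_faithful}, already in hand; the remainder of the argument is bookkeeping that aligns the source-fiber representations of the groupoid with the regular representations of Sections~\ref{sec-topological-discrete-series} and~\ref{sec-cuspidal} through the integration morphism.
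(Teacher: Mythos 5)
Your proposal is correct and follows essentially the same route as the paper's proof: the first identity via faithfulness of $\lambda_{x_\Sigma}$ (Lemma~\ref{lem:dense_source_rep_faithful}), the identification of the image of $C^*_r(\Groupoid_{\Interior};V)$ with the algebra of Lemma~\ref{lem-groupoid-c-star-algebra-as-compacts}, and Theorem~\ref{theorem-comapct-mod-center-ideal}; the second via Lemma~\ref{lem-cuspidal-ideal-and-v-isotypical-spaces} and the fact that the reduced norm on $C^*_r(\Groupoid_{\partial\Satake};V)$ is computed from the source-fiber representations at the orbit representatives $x_I$, $I\subsetneqq\Sigma$. Your explicit remarks that source fibers over boundary points lie in $\Groupoid_{\partial\Satake}$ and that $\lambda_x\cong\lambda_{gx}$ along orbits merely spell out what the paper compresses into "by definition of the norm."
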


\begin{proof}
Thanks to Lemma~\ref{lem:dense_source_rep_faithful} and the commutativity of the diagram in \eqref{eq-commuting-diagram-integration-and-regular-reps}, the assertion that $f_!\in C^*_r(\Groupoid_{\Interior};V)$ is equivalent to the assertion that under the regular representation of $C^*_r(G)$ on $L^2(G/K;V)$, $f$ maps to an element in the image of the regular representation of $C^*_r(\Groupoid_{\Interior};V)$ (we are identifying Hilbert spaces here as in Lemma~\ref{lem-l-2-isometric-isomorphism-with-v}).  But this image is precisely the $C^*$-algebra of operators described in Lemma~\ref{lem-groupoid-c-star-algebra-as-compacts}.  So by Theorem~\ref{theorem-comapct-mod-center-ideal} the assertion that $f_!\in C^*_r(\Groupoid_{\Interior};V)$ for all $V$ is equivalent to the assertion that $f\in C^*_{\cmc}(G)$, as required.

The second assertion is a consequence of the commutativity of the diagram in \eqref{eq-commuting-diagram-integration-and-regular-reps} again, together with Lemma~\ref{lem-cuspidal-ideal-and-v-isotypical-spaces} and the fact that, by definition of the norm on $C^*_r(\Groupoid_{\partial \Satake};V)$, this $C^*$-algebra is represented faithfully on the direct sum of all $L^2 (G/H_I;V)$ (so that an element of the $C^*$-algebra is zero if and only if it is zero as an operator on each Hilbert space summand).
\end{proof}

\begin{lemma} 
\label{lem-short-exact-sequence}
Let $V$ be a finite-dimensional unitary representation of $K$. The inclusion and restriction morphisms 
\[
\Cc  (\Groupoid_{\Interior};V) \to  \Cc  (\Groupoid_{\Satake};V)
\quad \text{and} \quad 
\Cc  (\Groupoid_{\Satake};V) \to  \Cc  (\Groupoid_{\partial \Satake};V)
\]
extend to the reduced groupoid $C^*$-algebra completions, and fit into a short exact sequence of $C^*$-algebras
\[
0 \to C_r^*(\Groupoid_{\Interior}; V) \to C_r^*(\Groupoid_{\Satake}; V) \to C_r^*(\Groupoid_{\partial \Satake}; V) \to 0 .
\]
\end{lemma}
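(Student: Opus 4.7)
The approach rests on one structural observation: both $\Interior$ and $\partial\Satake$ are $G$-invariant subsets of $\Satake$, so for any $x \in \Satake$ the source fiber $\Groupoid_{\Satake,x}$ is contained entirely in $\Groupoid_\Interior$ if $x\in\Interior$, and entirely in $\Groupoid_{\partial\Satake}$ if $x\in\partial\Satake$. Consequently, each source-fiber representation $\lambda_x$ of $\Cc(\Groupoid_\Satake;V)$ factors through one of the two reductions, and the three reduced $C^*$-norms become transparently compatible.

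Extensibility of both maps follows immediately. For $f \in \Cc(\Groupoid_\Interior;V)$ extended by zero to $\Groupoid_\Satake$, one has $\lambda_x(f) = 0$ for every $x \in \partial\Satake$, so
\[
\|f\|_{C^*_r(\Groupoid_\Satake;V)} = \sup_{x\in\Satake}\|\lambda_x(f)\| = \sup_{x\in\Interior}\|\lambda_x(f)\| = \|f\|_{C^*_r(\Groupoid_\Interior;V)},
\]
which makes the inclusion isometric. Dually, for $f \in \Cc(\Groupoid_\Satake;V)$ and $x \in \partial\Satake$ the operator $\lambda_x(f)$ depends only on the restriction $f|_{\Groupoid_{\partial\Satake}}$, giving the contractive bound $\|f|_{\Groupoid_{\partial\Satake}}\|_{C^*_r(\Groupoid_{\partial\Satake};V)} \le \|f\|_{C^*_r(\Groupoid_\Satake;V)}$. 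Surjectivity of the restriction at the dense level comes from Tietze extension applied to sections of the Fell bundle $\End(V)$ over the closed subgroupoid $\Groupoid_{\partial\Satake}$, followed by multiplication with a compactly supported cutoff.

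The main task is exactness in the middle: the kernel of the restriction must coincide with the image of the inclusion. Containment in one direction is immediate, so by density it suffices to show that any $f \in \Cc(\Groupoid_\Satake;V)$ with $f|_{\Groupoid_{\partial\Satake}} \equiv 0$ lies in the closure of $\Cc(\Groupoid_\Interior;V)$. Choose cutoffs $\phi_n \in \Cc(\Interior)$ with $0 \le \phi_n \le 1$ whose $\{\phi_n=1\}$-sets exhaust $\Interior$, and set $f_n(\gamma) = \phi_n(\operatorname{target}(\gamma))\, f(\gamma)\, \phi_n(\operatorname{source}(\gamma))$, so that $f_n \in \Cc(\Groupoid_\Interior;V)$. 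For any $\epsilon>0$, the set $S_\epsilon = \{\gamma : \|f(\gamma)\| \ge \epsilon\}$ is closed in $\supp(f)$ and disjoint from $\Groupoid_{\partial\Satake}$, hence a compact subset of $\Groupoid_\Interior$; its source and target projections form a compact subset of $\Interior$ on which $\phi_n \equiv 1$ for $n$ large. Thus $f_n = f$ on $S_\epsilon$ and $\|(f-f_n)(\gamma)\| \le 2\epsilon$ elsewhere, so that $f_n \to f$ uniformly with supports in a fixed compact subset of $\Groupoid_\Satake$.

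I expect this last approximation step to be the main obstacle. The subtlety is confirming that uniform convergence of $f_n \to f$ with uniformly bounded support yields convergence in the reduced groupoid $C^*$-norm. This follows from the standard estimate that on compactly supported sections the inductive-limit ($I$-)norm dominates the reduced $C^*$-norm, combined with the bound $\|f - f_n\|_I \le \|f - f_n\|_\infty \cdot C$, where $C$ is a uniform bound on $\mu^x(\supp(f))$ over $x \in \Satake$, finite by continuity of the Haar system on the compact base $\Satake$.
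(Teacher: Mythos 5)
Your argument establishes the easy parts correctly (the inclusion extends isometrically because $\lambda_x(f)=0$ for $x\in\partial\Satake$ when $f$ is supported in $\Groupoid_{\Interior}$; the restriction extends contractively and is surjective; and your cutoff approximation shows that every $f\in \Cc(\Groupoid_{\Satake};V)$ vanishing on $\Groupoid_{\partial\Satake}$ lies in the closure of $\Cc(\Groupoid_{\Interior};V)$). But there is a genuine gap at the step you dismiss with ``by density it suffices to show\dots''. Writing $J=\ker\bigl(C^*_r(\Groupoid_{\Satake};V)\to C^*_r(\Groupoid_{\partial\Satake};V)\bigr)$, what you need is $J\subseteq \overline{\Cc(\Groupoid_{\Interior};V)}$, and the density of $\Cc(\Groupoid_{\Satake};V)$ in the whole algebra does \emph{not} imply that $J\cap \Cc(\Groupoid_{\Satake};V)$ is dense in $J$. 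A general element $a\in J$ is a limit of compactly supported sections $f_n$ whose boundary restrictions merely tend to $0$ in the \emph{reduced} $C^*$-norm of $\Groupoid_{\partial\Satake}$; to correct $f_n$ to a section vanishing on the boundary you would need to extend $f_n|_{\partial\Satake}$ to $\Groupoid_{\Satake}$ with reduced norm controlled by $\|f_n|_{\partial\Satake}\|_{C^*_r(\Groupoid_{\partial\Satake};V)}$, and no such control is available in general. This is exactly the mechanism by which exactness of \emph{reduced} groupoid $C^*$-algebras fails for non-amenable groupoids, so the step you skipped is the entire content of the lemma rather than a routine density remark.

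The paper closes this gap differently: it first observes that the corresponding sequence of \emph{maximal} groupoid $C^*$-algebras is exact, then reduces middle exactness of the reduced sequence, by a diagram chase, to the statement that $C^*_{\max}(\Groupoid_{\partial\Satake})\to C^*_r(\Groupoid_{\partial\Satake})$ is an isomorphism. That isomorphism is proved by induction over the finitely many $G$-orbits in $\partial\Satake$, using that the reduction to each orbit is $G/H_I\times_{A_I}G/H_I$, which is amenable (Morita equivalent to the abelian group $A_I$). Some input of this kind --- amenability of the boundary groupoid, or an equivalent exactness hypothesis --- is indispensable, and your proof would need to incorporate it to be complete.
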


\begin{proof}
This is fairly easy case of a well-known argument using the notion of amenability; see for instance \cite[Rem.\,4.10]{Renault91}. First, if $\mathfrak{Z}$ is any closed, $G$-invariant subset of $\Satake$, then the top row of the diagram
\[
\xymatrix{
0 \ar[r] & C^*_{\max} (\Groupoid_{\Satake\setminus \mathfrak{Z}}) \ar[r]\ar[d]&
C^*_{\max} (\Groupoid_{\Satake}) \ar[r]\ar[d]&
C^*_{\max} (\Groupoid_{\mathfrak{Z}}) \ar[r]\ar[d]& 0
\\
0 \ar[r] & C^*_{r} (\Groupoid_{\Satake\setminus \mathfrak{Z}}) \ar[r]&
C^*_{r} (\Groupoid_{\Satake}) \ar[r]&
C^*_{r} (\Groupoid_{\mathfrak{Z}}) \ar[r]& 0 ,
}
\]
involving full groupoid $C^*$-algebras, is exact; compare \cite[Lem.\,1]{Green77}.  Second, a diagram chase shows that the exactness of the bottom row is a consequence of the assertion that the morphism 
\[
C^*_{\max} (\Groupoid_{\mathfrak{Z}}) \longrightarrow C^*_{r} (\Groupoid_{\mathfrak{Z}})
\]
is an isomorphism.  Third, since $\Satake$ has only a finite number of $G$-orbits, this isomorphism statement may be proved by induction, using the fact that when $\mathfrak{Z}$ is a single $G$-orbit --- of a point $x_I$ as in \eqref{eq-isotropy-of-x-i} --- the groupoid 
\[
\Groupoid_{\mathfrak{Z}} \cong G / H_I\times _{A_I} G/H_I
\]
is amenable; indeed it is Morita equivalent to the abelian group $A_I$.  Compare \cite[Prop.\,6.1.8]{AnantharamanDelarocheRenault}.  We have written the argument for the scalar case, but the Fell bundle case is exactly the same.
\end{proof}

\begin{proof}[Proof of Theorem~\ref{thm-compact-is-cuspidal}]
In view of the formulas in Lemma~\ref{lem-integration-characterizations-of-cpt-and-cusp-ideals}, the equality of $C^*_{\cmc}(G)$ and $C^*_{\cusp}(G)$ is a consequence of the equality 
\[
C_r^*(\Groupoid_{\Interior}; V) = \{\, f\in C^*_r(\Groupoid;V) : f\vert_{\partial \Satake} = 0\,\} ,
\]
which follows from Lemma~\ref{lem-short-exact-sequence}.
\end{proof}

\appendix

\section{Proof of Theorem \ref{thm:V}}
\label{app-vector-bundle}

Let $G=KAN$ be a real reductive group with Cartan involution $\theta$.  In this appendix, we shall associate to each finite-dimensional unitary representation  of $K$  a $G$-equivariant Hermitian vector bundle   on the Satake compactification with the property described in Theorem~\ref{thm:V}.

\subsection{Oshima's construction of the Satake compactification} 
Oshima \cite{Oshima78} constructed a smooth, closed $G$-manifold $\Oshima$ and an inclusion of the Satake compactification $\Satake$ into $\Oshima$ as a closed subset. 
We refer the reader to \cite[Sec.\,3]{BraddHigsonYunckenOshimaPart1} for a detailed exposition using the same notation that we shall use below, but in brief, if $\Sigma\subseteq \Hom_{\R}(\mathfrak{a},\R)$ is the set of simple restricted roots associated to the given Iwasawa decomposition of $G$, and if $\R^\Sigma$ is the space of real-valued finite sequences indexed by $\Sigma$, then $\Oshima$ is constructed in two stages as a quotient of $\bigG= G{\times}\R^\Sigma$ by an equivalence relation. In the first stage a smooth family  
\[
\bigH = \{\, (h,t) \in \bigG : h\in H_t  \,\}
\]
of closed subgroups  $H_t\subseteq G$ indexed by $\R^\Sigma$ is constructed, and the associated smooth family of homogeneous spaces $\bigG/\bigH$ is formed.   In the second stage, an action of $A$ on $\R^\Sigma$ is defined by 
\[
(a \cdot t )_\alpha = e^{\alpha (\log (a))} t_\alpha\qquad \forall \alpha \in \Sigma .
\]
The groups $H_t$ have the property that 
\[
\Ad_{a} H_t = H_{a\cdot t}\qquad \forall a\in A,\,\,\, \forall t\in \R^\Sigma ,
\]
and so a right-action of $A$ on $\bigG/\bigH$ may be defined by
\[
(gH_t,t)\cdot a = (gH_ta, a^{-1}\cdot t) = (gaH_{a^{-1}\cdot t}, a^{-1}\cdot t)\qquad \forall g\in G,\,\,\, \forall a\in A,\,\,\, \forall t\in \R^\Sigma .
\]
The Oshima space is the quotient
\[
\Oshima = (\bigG/\bigH) / A .
\]
It carries a unique smooth $G$-manifold structure for which the quotient map from $\bigG$ (with the left-translation action of $G$) is a $G$-equivariant submersion. See \cite[Thm.\,3.3.12]{BraddHigsonYunckenOshimaPart1}.

Thinking of $\Oshima$ as a quotient of $\bigG$ by an equivalence relation, the  Satake compactification is the image in $\Oshima$ of the space 
$\bigG_{\nonnegative}  = G {\times}\R^\Sigma_{\spacenonnegative}$, 
where $\R^\Sigma_{\spacenonnegative}\subseteq \R^\Sigma$ is the space of sequences with zero or positive entries.  We may therefore  describe the Satake compactification as  
\[
\Satake = (\bigG_{\nonnegative} / \bigH_{\nonnegative})/A . 
\]

\subsection{A continuous family of finite-dimensional unitary representations} 
If $I$ is a subset of $\Sigma$, let us define $t_I\in \R^\Sigma_{\spacenonnegative}$ by
\begin{equation}
    \label{eq-def-of-t-i}
t_{I,\alpha} = 
\begin{cases}
    1 & \alpha \in I \\
    0 & \alpha \notin I .
\end{cases}
\end{equation}
The group $ H_{t_I}$ has a semidirect product structure 
\[
H_{t_I} = K_I\ltimes \theta (N_I) ,
\]
where $K_I\subseteq K$ and  $N_I\subseteq N$; see after \cite[Def. 3.2.20]{BraddHigsonYunckenOshimaPart1}. So $K_I$ is a quotient of $H_I$.

\begin{lemma}
\label{lem-smooth-family-of-fd-unitary-reps}
Let $\pi\colon K \to U(V)$ be a finite-dimensional unitary representation of $K$
    There is a unique family of continuous unitary representations 
    \[
    \pi_t\colon H_t \longrightarrow U(V)\qquad (t\in \R^\Sigma_{\spacenonnegative})
    \]
    such that 
    \begin{enumerate}[\rm (i)]

        \item for every $I\subseteq \Sigma$, the representation $\pi_{t_I}$ is the composition 
        \[
        H_I\stackrel{\text{\rm quot.}}\longrightarrow K_I \stackrel{\text{\rm incl.}}\longrightarrow K \stackrel{\pi} \longrightarrow U(V) ,
        \]
        \item if $t\in \R^\Sigma_{\spacenonnegative}$,  $h\in H_t$ and $a\in A$, then  $\pi_t(h) = \pi_{a\cdot t}(aha^{-1})$, and 

        \item the map
        \[
        \bigH_{\nonnegative} \ni (h,t) \longmapsto \pi_t(h) \in U(V)
        \]
        is continuous.
    \end{enumerate}
\end{lemma}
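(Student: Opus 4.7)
The plan is to define $\pi_t$ by the $A$-equivariance condition itself: given $t$, find an $a \in A$ that moves $t$ to a distinguished $t_I$, then apply the prescribed $\pi_{t_I}$.  Writing $I(t) = \{\alpha \in \Sigma : t_\alpha > 0\}$ for the support of $t$, and using the fact that the simple restricted roots are linearly independent on $\mathfrak{a}$ modulo $\mathfrak{a}_\Sigma$, one finds $a \in A$ with $\alpha(\log a) = -\log t_\alpha$ for every $\alpha \in I(t)$; any such $a$ satisfies $a \cdot t = t_{I(t)}$.  Recalling from \cite{BraddHigsonYunckenOshimaPart1} that $\Ad_a H_t = H_{a \cdot t}$, set
\[
\pi_t(h) := \pi_{t_{I(t)}}(a h a^{-1})\qquad (h \in H_t).
\]

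Two valid choices of $a$ differ by an element of the stabilizer $A_{I(t)}$ of $t_{I(t)}$, and since $A_{I(t)}$ centralizes $K_{I(t)}$ and normalizes $\theta(N_{I(t)})$ while $\pi_{t_{I(t)}}$ factors through the semidirect-product projection $H_{t_{I(t)}} \to K_{I(t)}$, the definition is well posed.  Property (i) falls out on taking $a = e$ at $t = t_I$.  Property (ii) is a direct calculation: if $b$ scales $a \cdot t$ back to $t_{I(t)}$, then $ba$ scales $t$ to $t_{I(t)}$, so
\[
\pi_t(h) = \pi_{t_{I(t)}}\bigl((ba)h(ba)^{-1}\bigr) = \pi_{t_{I(t)}}\bigl(b(aha^{-1})b^{-1}\bigr) = \pi_{a \cdot t}(aha^{-1}).
\]
Uniqueness is immediate from properties (i) and (ii): every $t$ lies in the $A$-orbit of the unique $t_I$ with $I = I(t)$, so the value of $\pi_t$ is forced.

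The main obstacle is property (iii).  Continuity within a single stratum $\{t : I(t) = I\}$ is easy, because one can choose $a(t)$ to depend continuously on $t$ by demanding $\alpha(\log a(t)) = 0$ for $\alpha \notin I$, and then composition of continuous maps finishes the job.  The hard case is a sequence $t^{(n)} \to t_{I_0}$ (reducible to such a boundary point via (ii)) for which $I(t^{(n)}) =: I$ strictly contains $I_0$.  Then $\alpha(\log a_n) = -\log t^{(n)}_\alpha \to +\infty$ for $\alpha \in I \setminus I_0$, so the scaling elements $a_n$ diverge and no continuous choice persists across the boundary.  One must nevertheless argue that if $h^{(n)} \in H_{t^{(n)}}$ converges to $h_0 = k_0 n_0 \in K_{I_0} \ltimes \theta(N_{I_0})$, then the $K_I$-component of $a_n h^{(n)} a_n^{-1}$ in the decomposition $H_{t_I} = K_I \ltimes \theta(N_I)$ converges in the compact group $K$ to $k_0 \in K_{I_0} \subseteq K_I$, while the $\theta(N_I)$-component, however large it may grow, is killed by $\pi_{t_I}$.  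Establishing this asymmetry is the real technical step, and it requires unpacking the explicit smooth structure of the family $\bigH_{\nonnegative}$ near its boundary strata as recorded in \cite[Sec.\,3]{BraddHigsonYunckenOshimaPart1}, to the effect that the additional generators of $H_{t^{(n)}}$ relative to $H_{t_{I_0}}$ are precisely those that, after conjugation by $a_n$, are absorbed into the unipotent factor $\theta(N_I)$ on which $\pi_{t_I}$ acts trivially.  Once that compatibility is extracted, continuity of $\pi$ on $K$ supplies the conclusion.
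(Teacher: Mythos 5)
Your construction of $\pi_t$ by conjugating to the orbit representative $t_{I(t)}$, the well-definedness argument via the stabilizer $A_{I(t)}$ (which centralizes $K_{I(t)}$ and normalizes $\theta(N_{I(t)})$), and the deductions of (i), (ii) and uniqueness are all correct, and this is essentially the same skeleton as the paper's proof. The problem is property (iii). You correctly single out continuity of $t\mapsto \pi_t$ across boundary strata as ``the real technical step,'' but you do not carry it out: you only describe what would have to be shown --- that the $K_I$-component of $a_n h^{(n)} a_n^{-1}$ in $H_{t_I}=K_I\ltimes\theta(N_I)$ converges to $k_0$ while the unipotent component is annihilated by $\pi_{t_I}$ --- and then defer it to ``unpacking the explicit smooth structure'' of $\bigH_{\nonnegative}$. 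Since the conjugating elements $a_n$ diverge, nothing about the limiting behaviour of $a_n h^{(n)} a_n^{-1}$ is automatic, so as written this is a genuine gap rather than a proof.

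The paper closes exactly this gap with one explicit computation at the Lie algebra level. Writing $\lie{h}_t=\lie{m}\oplus\bigoplus_{\gamma}\bigl\{\,t^{2\gamma}X+\theta(X) : X\in\lie{g}_\gamma\,\bigr\}$, one finds that the representation defined by conjugation satisfies $d\pi_t\big\vert_{\lie{m}}=d\pi\big\vert_{\lie{m}}$ and $d\pi_t\bigl(t^{2\gamma}X_\gamma+\theta(X_\gamma)\bigr)=t^\gamma\, d\pi\bigl(X_\gamma+\theta(X_\gamma)\bigr)$, where $t^\gamma=\prod_\alpha t_\alpha^{n_{\gamma,\alpha}}$ with nonnegative integer exponents. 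This formula is polynomial, hence continuous, in $t\in\R^\Sigma_{\spacenonnegative}$, including across the degeneration $t^\gamma\to 0$ at the boundary strata; it delivers independence of the choice of $a$ and Lie-algebra-level continuity in one stroke, with no asymptotic analysis of diverging conjugators. Continuity at the group level then follows because every element of $H_t$ is a product of an element of $M$ (on which $\pi_t=\pi$ for all $t$) with an element of the identity component of $H_t$. If you prefer to keep your group-level decomposition argument, you must actually prove the convergence of the $K_I$-component you assert; the Lie-algebra computation is the shorter route and is the one the paper takes.
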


\begin{proof}
The elements $t_I\in \R^\Sigma_{\spacenonnegative}$ constitute a complete set of orbit representatives for the $A$-action on $\R^\Sigma_{\spacenonnegative}$, so the family $\{\pi_t\}$ is certainly uniquely determined by (i) and (ii), assuming it exists. 

As for existence, given $t\in \R^\Sigma_{\spacenonnegative}$, choose $a\in A$ and $I\subseteq \Sigma$ such that $t= a\cdot t_I$, and then define
\[
\pi_t (h) = \pi_{t_I}(a^{-1} ha)\qquad \forall h\in H_t ,
\]
where $\pi_{t_I}$ is defined as in (i). Now the Lie algebra of $H_t$ has the form
\[
      \lie{h}_t = \lie{m} \oplus \bigoplus_{\gamma\in\Delta^+(\mathfrak{g},\mathfrak{a})} \bigl \{ \,  t^{2 \gamma}X {+} \theta(X) : X_\gamma\in \lie{g}_\gamma\, \bigr \} 
    ,
\]  
where the direct sum is over the positive restricted roots, and $\lie{m}$ is the Lie algebra of the centralizer of $\mathfrak{a}$ in $K$. See \cite[Def.\,3.2.7]{BraddHigsonYunckenOshimaPart1}.  One  computes that the Lie algebra representation of $\lie{h}_t$ associated to $\pi_t$ has the form 
\[
\begin{gathered}    
d\pi_t\big \vert_{\lie{m}} = d\pi\big \vert_{\lie{m}}
\\
d\pi_t : t^{2\gamma}X_\gamma +\theta(X_\gamma) \mapsto t^\gamma d\pi\bigl (X_\gamma +\theta(X_\gamma)\bigr ),
\end{gathered}
\]
where $t^{\gamma}=\prod _{\alpha\in \Sigma} t_\alpha^{n_{\gamma,\alpha}} $, with the nonnegative integers $n_{\gamma,\alpha}$ being the coefficients of the expression $\gamma = \sum _{\alpha\in \Sigma} n_{\gamma,\alpha} \alpha$,  as in \cite[Def.\,3.2.5]{BraddHigsonYunckenOshimaPart1}. 
This shows that $\pi_t$ is independent of the choices made in its definition, and that the family  $\{\pi_t\}$ is continuous in $t\in \R^\Sigma_{\spacenonnegative}$, at least  at the Lie algebra level.  

Now,  every element of the group $H_t$ is a product of an element of  $M$ and an element of the connected Lie subgroup with Lie algebra $\lie{h}_t$, see \cite[Def.\,3.2.20]{BraddHigsonYunckenOshimaPart1}. Since  $\pi_t \vert_M = \pi\vert_M$, for all $t$, the family of Lie group representations  $\{\pi_t\}$ is continuous at the group level, as in item (iii) above.  This completes the proof. 
\end{proof}

\subsection{Construction of the vector bundle}
We are now ready to construct our $G$-equivariant Hermitian vector bundles on $\Satake$.

\begin{proof}[Proof of Theorem \ref{thm:V}]
    We shall construct the vector bundle in the same way that Oshima constructed the Satake compactification.  Thus we begin by forming the topological space $(\bigG_{\nonnegative}{\times}V) / \bigH_{\nonnegative}$, which is the quotient of $\bigG_{\nonnegative}{\times}V$ by the equivalence relation
    \[
    (g,t,v) \sim (gh,t,\pi_t(h^{-1})v) \qquad \forall g\in G,\,\,\,\forall t\in \R^\Sigma_{\spacenonnegative},\,\,\, \forall h\in H_t ,\,\,\, \forall v\in V.
    \]
    Then we equip the quotient space with the $A$-action 
    \[
    [g,t,v]\cdot a = [ga,a^{-1}t, v]
    \]
    and form the further topological quotient space 
    \[
    V_{\Satake} = ((\bigG_{\nonnegative}{\times}V) / \bigH_{\nonnegative})/A.
    \]
    This is a family of vector spaces over the Satake compactification, in the sense of \cite[Sec.\,1.1]{AtiyahKTheory}, and the left-translation action of $G$ preserves this structure. Local triviality of the family may be checked  using the coordinate map 
    \[
    \begin{gathered}
    N\times \R^\Sigma_{\spacenonnegative} \longrightarrow \Satake
    \\
    (n,t)\longmapsto \ldoublebracket n,t\rdoublebracket 
    \end{gathered}
    \]
    (the double brackets indicate equivalence classes) which is a homeomorphism onto an open subset of $\Satake$; see \cite[Lem.\,3.4.3]{BraddHigsonYunckenOshimaPart1}. The inner product on $V$ gives the equivariant vector bundle we have just defined a $G$-invariant Hermitian structure.  Finally, if $I\subseteq \Sigma$ and if $t_I\in \R^\Sigma_{\spacenonnegative}$ is as in \eqref{eq-def-of-t-i}, then the map
    \[
    \begin{aligned}
     V_{\Satake}\big \vert_{\Satake_I}  & \longrightarrow G/A_IH_I\times_{K_I} V 
     \\
       \ldoublebracket g,at_I,v\rdoublebracket& \longmapsto \ldoublebracket ga,v\rdoublebracket 
     \end{aligned}
    \]
     is a $G$-equivariant vector bundle isomorphism  over  $\Satake_I\cong G/A_IH_I$, as required.
\end{proof}

\subsection{Remark about vector bundles on the Oshima space}
One might ask if the construction in the previous section can be improved  so as to produce a \emph{smooth}, $G$-equivariant Hermitian vector bundle on the Oshima space $\Oshima$?  The answer is yes in a range of cases.

One approach is to examine the family of Lie algebra representations  
\[
\lie{h}_t \ni  t^{2\gamma}X_\gamma +\theta(X_\gamma) \stackrel{d\pi_t} \longmapsto t^\gamma d\pi\bigl(X_\gamma +\theta(X_\gamma)\bigr)\in \mathfrak{u}(V)
\]
with $d\pi_t\big \vert_{\lie{m}} = d\pi\big \vert_{\lie{m}}$  that was encountered in the proof Lemma~\ref{lem-smooth-family-of-fd-unitary-reps}. It extends using the same formula  to a smooth family of infinitesimally unitary representations $\pi_t \colon \lie{h}_t \to \lie{u}(V)$ parametrized by $t\in \R^\Sigma$. If each of these Lie algebra representations integrates to a Lie group representation   $\pi_t \colon H_t \to U(V)$ with $\pi_{t}\big \vert_{M} = \pi\big\vert _M$, then the arguments in the previous two sections will construct a smooth, $G$-equivariant  Hermitian vector bundle on $\Oshima$.

The Lie algebra representations $\pi_t$ do not always integrate to Lie group representations; counterexamples can be found among the finite covers of the group  $SL(2,\R){\times}SL(2,\R)$, or even of $SO(2){\times}SL(2,\R)$.  But if $G$ has a complexification $G_{\C}$, then this problem does not arise. Indeed,  for every $t\in \R^\Sigma$ there is an element of $A_{\C}$ that conjugates   $H_t$ into itself and 
conjugates the Lie algebra representation $d\pi_t$ to the representation $d\pi_{|t|}$.  So integrability of $\pi_t$ follows from the proof of Lemma~\ref{lem-smooth-family-of-fd-unitary-reps}. 
 
As a result, if  $G$ is a linear  real reductive group, then each of the $G$-equi\-variant Hermitian vector bundles $V_{\Satake}$ constructed in the proof of Theorem~\ref{thm:V} is the restriction to $\Satake$ of a smooth, $G$-equivariant Hermitian vector bundle over the Oshima space $\Oshima$.

\bibliographystyle{alpha}
\bibliography{refs}

\end{document}